\newcommand{\rrvert}{\vert}
\newcommand{\rrVert}{\Vert}
\newcommand{\llvert}{\vert}
\newcommand{\llVert}{\Vert}
\newtheorem{theorem}{Theorem}[section]
\newtheorem{corollary}[theorem]{Corollary}
\newtheorem{lemma}[theorem]{Lemma}
\newtheorem{proposition}[theorem]{Proposition}
\def\R{\mathbb R}
\def\N{\mathbb N}
\def\E{\mathbb E}
\newcommand{\eqref}[1]{(\ref{#1})}
\begin{document}
\begin{frontmatter}

\title{Strong uniqueness for SDEs in Hilbert spaces with nonregular drift}
\runtitle{SDE's with nonregular drift}

\begin{aug}
\author[A]{\fnms{G.}~\snm{Da Prato}\corref{}\ead[label=e1]{giuseppe.daprato@sns.it}\thanksref{T1,m1}},
\author[B]{\fnms{F.}~\snm{Flandoli}\ead[label=e2]{flandoli@dma.unipi.pi}\thanksref{T2,m2}},
\author[C]{\fnms{M.}~\snm{R\"{o}ckner}\ead[label=e3]{roeckner@math.uni-bielefeld.de}\thanksref{T3,m3}} 
\and
\author[D]{\fnms{A. Yu.}~\snm{Veretennikov}\ead[label=e4]{a.veretennikov@leeds.ac.uk}\thanksref{m41,m42,m43,T4}}
\runauthor{Da Prato, Flandoli, R\"{o}ckner and Veretennikov}
\thankstext{T1}{Supported in part by Progetto GNAMPA 2013.}
\thankstext{T2}{Supported in part by the Italian Ministero Pubblica Istruzione, PRIN 2010--2011,
``Problemi differenziali di evoluzione: approcci
deterministici e stocastici e loro interazioni.''}
\thankstext{T3}{Supported in part by the DFG through SFB-701 and IRTG 1132.}
\thankstext{T4}{Supported in part by
the Russian Foundation for Basic Research Grant 13-01-12447 $ofi_m$.}
\affiliation{Scuola Normale Superiore,\thanksmark{m1} Universit\`a di Pisa,\thanksmark{m2}
University of Bielefeld\thanksmark{m3}, University of Leeds\thanksmark{m41}, National Research University Higher
School of Economics Moscow\thanksmark{m42} and Institute for Information Transmission Problems Moscow\thanksmark{m43}}
%
\address[A]{G. Da Prato\\
Scuola Normale Superiore\\
Piazza dei Cavalieri 7\\
56136 Pisa\\
Italy\\
\printead{e1}}
\address[B]{F. Flandoli\\
Universit\`a di Pisa\\
Largo Pontecorvo 5\\
56127 Pisa\\
Italy\\
\printead{e2}\hspace*{19pt}}
\address[C]{M. R\"ockner\\
University of Bielefeld\\
Universit\`atsstrasse 25\\
33615 Bielefeld\\
Germany\\
\printead{e3}}
\address[D]{A. Yu. Veretennikov\\
University of Leeds\\
LS2 9JT, Woodhouse Lane\\
Leeds LS2 9JT\\
UNited Kingdom\\
and\\
Institute for Information\\
\quad Transmission Problems\\
Moscow\\
Russia\\
and\\
National Research University\\
\quad Higher School of Economics\\
Moscow\\
Russia\\
\printead{e4}}
\end{aug}

%
\received{\smonth{4} \syear{2014}}
%
\revised{\smonth{1} \syear{2015}}

%
\begin{abstract}
We prove pathwise uniqueness for a class of stochastic differential
equations (SDE) on a Hilbert space with cylindrical Wiener noise, whose
nonlinear drift parts are sums of the sub-differential of a convex
function and a bounded part. This generalizes a classical result by one
of the authors to infinite dimensions. Our results also generalize and
improve recent results by N.~Champagnat and P. E. Jabin, proved in
finite dimensions, in the case where their diffusion matrix is constant
and nondegenerate and their weakly differentiable drift is the (weak)
gradient of a convex function.
We also prove weak existence, hence obtain unique strong solutions by
the Yamada--Watanabe theorem. The proofs are based in part on a recent
maximal regularity result in infinite dimensions, the theory of
quasi-regular Dirichlet forms and an infinite dimensional version of a
Zvonkin-type transformation. As a main application, we show pathwise
uniqueness for stochastic reaction diffusion equations perturbed by a
Borel measurable bounded drift. Hence, such SDE have a unique strong solution.
\end{abstract}

%
\begin{keyword}[class=AMS]
\kwd{60H15}
\kwd{35R60}
\kwd{31C25}
\kwd{60J25}
\end{keyword}
\begin{keyword}
\kwd{Pathwise uniqueness}
\kwd{stochastic differential equations on Hilbert spaces}
\kwd{stochastic PDEs}
\kwd{maximal regularity on infinite dimensional spaces}
\kwd{(classical) Dirichlet forms}
\kwd{exceptional sets}
\end{keyword}
\end{frontmatter}

\section{Introduction}\label{sec1}

In a separable Hilbert space $H$, with inner product $ \langle
\cdot,\cdot \rangle$ and norm $\llvert \cdot\rrvert $, we
consider the SDE%
%
\begin{eqnarray}\label{SDE}
dX_{t} & =& \bigl( AX_{t}-\nabla V ( X_{t} ) +B
( X_{t} ) \bigr) \,dt+dW_{t},
\nonumber
\\[-8pt]
\\[-8pt]
\nonumber
X_{0} & =&z,
\end{eqnarray}
where we assume:

(H1) $A\dvtx D ( A ) \subset H\rightarrow H$ is a
self-adjoint and strictly
negative definite operator (i.e., $A\le-\omega I$ for some $\omega
>0$), with $A^{-1}$ of trace class.

(H2) $V\dvtx H\rightarrow (-\infty ,+\infty ] $ is a
\textit{convex, proper,
lower-semicontinuous, lower bounded} function; denote by $D_{V}$ the
set of all $x\in\{V<\infty\}$ such
that $V$ is G\^{a}teaux differentiable at $x$.

(H3) For the Gateaux derivative $\nabla V$ we have for some
$\varepsilon>0$
\begin{eqnarray}\label{e2}
\gamma ( D_{V} ) &=&1,
\nonumber\\
\int_{H} \bigl( \bigl|V(x)\bigr|^{2+\varepsilon}+\bigl\vert\nabla V(x)
\bigr\vert^{2} \bigr)\gamma ( dx )& <&\infty,\\
 \int_{H}
\bigl\llVert D^{2}V ( x ) \bigr\rrVert _{\mathcal{L} ( H ) }\nu ( dx ) &<&
\infty,\nonumber
\end{eqnarray}
where $\gamma$ is the centered Gaussian measure in $H$ with covariance
$Q=-\frac{1}{2}A^{-1}$ and $\nu$ is the probability measure on $H$
defined as
\[
\nu ( dx ) =\frac{1}{Z}e^{-V ( x ) }\gamma ( dx ) ,\qquad Z=\int
_{H}e^{-V ( x ) }\gamma ( dx ).
\]
Clearly, $\gamma$ and $\nu$ have the same zero sets.
Here, the second assumption in \eqref{e2} means that there exists
$u_n\in\mathcal F C^2_b(H), n\in\mathbb N$, such that
$V=\lim_{n\to\infty} u_n$ in $L^2(H,\nu)$ and $D^2V:=\lim_{n\to\infty}
D^2u_n$ in $L^2(H,\nu;L(H))$, where $\mathcal F C^2_b(H)$ denotes the
set of all $C^2_b$-cylindric functions on $H$ (see below for the
precise definition) and $L(H)$ the set of all bounded linear operators
from $H$ to $H$.

(H4) $B\dvtx H\rightarrow H$ is Borel measurable and
bounded.

(H5) $W$ is an ($\mathcal F_t$)-cylindrical Brownian motion
in $H$, on some pobability space $(\Omega,\mathcal F,P)$ with normal
filtration $(\mathcal F_t), t\ge0$.

 Formally, $W$ is a process of the form $W_{t}=\sum_{i=1}^{\infty
}W_{t}^{i}e_{i}$ where $W_{t}^{i}$ are independent real valued Brownian
motions defined on a probability space $ ( \Omega,\mathcal
{F},P ) $
and $ \{ e_{i} \} _{i\in\mathbb{N}}$ is a complete orthonormal
system in $H$; for every $h\in H$, the series $ \langle W_{t}
,h \rangle=\sum_{i=1}^{\infty}W_{t}^{i} \langle e_{i},h
\rangle
$ converges in $L^{2} ( \Omega ) $.
%
\begin{remark}
\label{r0}
Since $A$ is strictly negative definite, we may assume $V(x)\ge
\varepsilon|x|^2, x\in H$, for some $\varepsilon>0$ and all $x\in H$.
Otherwise, replace $A$ by $A+\frac{\omega}2 I$ and $V$ by $V+\frac
{\omega}2 |x|^2+|\inf_{x\in H}V(x)|$. In particular, without loss of
generality we have that $|x|^pe^{-V(x)}$ is bounded in $x\in H$ for all
$p\in(0,\infty)$.
\end{remark}

\begin{remark}
\label{r1}
(i) We note that if $x\in D_V$ by definition
\[
\lim_{s\to0} \frac{1}s \bigl(V(x+sh)-V(x)\bigr)=\bigl
\langle\nabla V(x),h \bigr\rangle
\]
for all $h\in H$ where a priori the limit is taken in the Alexandrov
topology on $(-\infty,+\infty]$, since $V(x+sh)$ could be $+\infty$ for
some $s$. On the other hand, the limit $\langle\nabla V(x),h \rangle\in
\mathbb R$, so $V(x+sh)\in \mathbb R$ for $s\le s_0$ for some small
enough $s_0>0$.

(ii) If $\{V<\infty\}$ is open, then $\gamma(D_V)=1$. Indeed,
if $\{V<\infty\}$ is open, then $V$ is continuous on $\{V<\infty\}$;
see, for example, \cite{P93}, Proposition~3.3. Since furthermore, $V$
is then locally Lipschitz on $\{V<\infty\}$ (see, e.g., \cite{P93},
Proposition~1.6), it follows by the fundamental result in \cite
{A76,P78}; see also \cite{Bo10}, Section~10.6, that $\gamma(\{
V<\infty\}\setminus D_V)=0$. But $\gamma(\{V<\infty\})=1$, since $V\in
L^2(H,\gamma)$.
\end{remark}

It turns out that the condition on the second (weak) derivative in
\eqref{e2} in Hypothesis (H3) is too strong for some applications (see
Section~\ref{sec7} below). Therefore, we shall also consider the following
modified version of (H3):

(H3)$'$ $V$ and $\nabla V$ satisfy (H3) with the condition on
the second derivative of $V$ replaced by the following: there exists a
separable Banach space $E\subset H$, continuously and densely embedded,
such that $E\subset D(V)$, $\gamma(E)=1$ and on $E$ the function $V$ is
twice G\^ateaux-differentiable such that for all $x\in E$ its second G\^
ateaux-derivative $V''_E(x)\in L(E,E')$ (with $E'$ being the dual of
$E$) extends by continuity to an element in $L(H,E')$ such that
\[
\bigl\|V''_E(x)\bigr\|_{ L(H,E')}\le
\Psi\bigl(|x|_E\bigr)
\]
for some convex function $\Psi\dvtx [0,\infty)\to[0,\infty)$. Furthermore,
for $\gamma$-a.e. initial condition $z\in E$ there exists a
(probabilistically) weak solution $X^V=X^V(t), t\in[0,T]$, to SDE
\eqref{SDE} with $B=0$ so that
%
\begin{equation}
\label{e2'} \E\int_0^T\Psi
\bigl(\bigl|X^V(s)\bigr|_E\bigr) \,ds<\infty.
\end{equation}

Though (H3)$'$ is quite complicated to formulate, it is exactly what is
fulfilled if $\nabla V$ is a polynomial. We refer to Section~\ref{sec7.1} below.
%
\begin{remark}
\label{r2'}
We would like to stress at this point that the conditions on the
second derivative of $V$ both in (H3) and in (H3)$'$ are only used to be
able to apply the mean value theorem in the proof of Lemma~\ref{l38}
below. For the rest of this paper, we assume that (H1), (H2), (H4),
(H5) and (H3) or (H3)$'$ are in force.
\end{remark}

\begin{definition}
\label{Def sol}A solution of the SDE (\ref{SDE}) in $H$ is a filtered
probability space $ ( \Omega,\mathcal{F}, ( \mathcal
{F}_{t} )
_{t\geq0},P ) $ on $H$, an $H$-cylindrical $(\mathcal
F_t)$-Brownian motion $ ( W_{t} )
_{t\geq0}$ w.r.t. this space, a continuous $(\mathcal{F}_t)$-adapted
process $ (
X_{t}  ) _{t\geq0}$ on this space such that:
\begin{longlist}[(ii)]
\item[(i)] $X_{s}\in D_{V}$ for $dt\otimes P$ a.e. $ ( s,\omega ) $
and $\int_{0}
^{T}\llvert  \langle\nabla V ( X_{s} ) ,h \rangle
\rrvert  \,ds<\infty$ with probability one, for every $T>0$ and $h\in
D ( A ) $;

\item[(ii)] for every $h\in D ( A ) $ and $t\geq0$, one has
\[
\langle X_{t},h \rangle= \langle z,h \rangle+\int
_{0} ^{t} \bigl( \langle X_{s},Ah
\rangle+ \bigl\langle B ( X_{s} ) -\nabla V ( X_{s} ) ,h
\bigr\rangle \bigr) \,ds+ \langle W_{t},h \rangle
\]
with probability one.
\end{longlist}

If $X$ is $\mathcal{F}^{W}$-adapted, where $\mathcal{F}^{W}= (
\mathcal{F}_{t}^{W} ) _{t\geq0}$ is the normal filtration
generated by $W$, we say that $X$ is a strong solution.
\end{definition}

The Gaussian measure $\gamma$ is invariant for the linear equation
\[
dZ_{t}=AZ_{t}\,dt+dW_{t}
\]
while $\nu$ is invariant for the nonlinear equation
\[
dX_{t}= \bigl( AX_{t}-\nabla V ( X_{t} ) \bigr)
\,dt+dW_{t}.
\]
They are equivalent, since $V<\infty$ (hence $e^{-V}>0$) at least on $D_{V}$
and $\gamma ( D_{V} ) =1$. Hence, the full measure sets in $H$ are
the same with respect to $\gamma$ or $\nu$. Our main uniqueness result
is the following.

\begin{theorem}
\label{t4}There is a Borel set $\Xi\subset H$ with $\gamma (
\Xi ) =1$ having the following property. If $z\in\Xi$ and $X$, $Y$ are
two solutions with initial condition $x$ (in the sense of Definition~\ref{Def sol}), defined on the same filtered probability space $ (
\Omega,\mathcal{F}, ( \mathcal{F}_{t} ) _{t\geq0},P ) $ and
w.r.t. the same cylindrical Brownian motion $W$, then $X$ and $Y$ are
indistinguishable processes. Hence, by the Yamada--Watanabe theorem
they are (probabilistically) strong solutions and have the same law.
\end{theorem}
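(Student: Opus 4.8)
The strategy is an infinite-dimensional Zvonkin-type transformation built on the reference dynamics $dX_t=(AX_t-\nabla V(X_t))\,dt+dW_t$, for which $\nu$ is invariant and symmetrizing. First I would fix the reference process and the exceptional set. The closure of $(\varphi,\psi)\mapsto\frac12\int_H\langle\nabla\varphi,\nabla\psi\rangle\,d\nu$ on $\mathcal FC^2_b(H)$ is a quasi-regular symmetric Dirichlet form $(\mathcal E,D(\mathcal E))$ on $L^2(H,\nu)$, hence is properly associated with a $\nu$-symmetric diffusion $\mathbf M=(X^V_t,\mathbf P_x)$ whose generator $L$ extends $\varphi\mapsto\frac12\,\mathrm{Tr}\,D^2\varphi+\langle Ax-\nabla V(x),\nabla\varphi\rangle$ on cylinder functions and which is defined for every $x$ outside an $\mathcal E$-exceptional set $N_0$; since $\gamma$ and $\nu$ have the same null sets, $N_0$ is $\gamma$-negligible. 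Put $\Xi:=H\setminus(N_0\cup N_1)$, with $N_1$ a $\gamma$-null set absorbing the further exceptional sets met below. The link to \eqref{SDE} is Girsanov: as $B$ is bounded, the law of any solution of \eqref{SDE} started at $z$ is, on each $[0,T]$, equivalent to $\mathbf P_z$, so for $\gamma$-a.e. $z$ the time-occupation measure of any such solution is dominated, in $L^p$, by $\nu$; this is what turns $L^p(H,\nu)$-estimates into almost-sure path-wise ones.

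Next, the transformation. For $\lambda$ large, solve componentwise $\lambda u_k-Lu_k-\langle B,\nabla u_k\rangle=B_k$, $B_k:=\langle B(\cdot),e_k\rangle$, and put $u:=\sum_k u_k e_k$; the term $\langle B,\nabla\cdot\rangle$ is a harmless bounded perturbation of $L$ for $\lambda$ large. The decisive input is the recent maximal regularity result in infinite dimensions for $L$: it gives $u_k\in D(L)$ with quantitative control, in $L^p(H,\nu)$, of $Du$ and of $(-A)^{1/2}Du$ (equivalently $D^2u$ in Hilbert--Schmidt norm), together with $\|u\|_\infty\le C\lambda^{-1/2}$, $\|Du\|_\infty\le C\lambda^{-1/2}$ and $Du(x)$ Hilbert--Schmidt. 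Taking $\lambda$ so large that $\|Du\|_\infty\le\frac12$, the map $\Phi:=\mathrm{Id}+u$ is a bi-Lipschitz homeomorphism of $H$ with $D\Phi=\mathrm{Id}+Du$ boundedly invertible; write $\Psi:=\Phi^{-1}$. Using the Fukushima decomposition of $u_k$ along $X^V$ (whose martingale part, for this gradient-type form, is $\int_0^\cdot\langle\nabla u_k(X^V_s),dW_s\rangle$), the Girsanov change of measure, and approximation of $u_k$ by cylinder functions, one obtains, for any solution $X$ of \eqref{SDE}, that $u(X_t)=u(X_0)+\int_0^t Du(X_s)\,dW_s+\int_0^t\bigl(\lambda u(X_s)-B(X_s)\bigr)\,ds$, hence that $Y_t:=\Phi(X_t)$ satisfies, weakly in the sense of Definition \ref{Def sol},
\begin{equation*}
dY_t=\bigl(AX_t-\nabla V(X_t)+\lambda\,u(X_t)\bigr)\,dt+\bigl(\mathrm{Id}+Du(X_t)\bigr)\,dW_t,\qquad X_t=\Psi(Y_t).
\end{equation*}
The merely measurable bounded drift $B$ has thus been removed at the price of the Lipschitz-type term $\lambda u$ and of the perturbation $Du$ of the identity in the diffusion coefficient.

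For uniqueness, let $X,Y$ be two solutions with the same $z\in\Xi$, on the same space, w.r.t. the same $W$, and put $\widehat X_t:=\Phi(X_t)$, $\widehat Y_t:=\Phi(Y_t)$. Since $X-Y$ solves a noise-free equation with $A$ dissipative and starts at $0$, standard monotone-operator estimates apply; in particular $\langle(AX_t-\nabla V(X_t))-(AY_t-\nabla V(Y_t)),X_t-Y_t\rangle\le-\omega|X_t-Y_t|^2$ by $A\le-\omega I$ and convexity of $V$. Applying Itô to $|\widehat X_t-\widehat Y_t|^2$ --- the identity parts of $D\Phi$ cancel in the difference, so the diffusion contribution is $\|Du(X_t)-Du(Y_t)\|_{HS}^2\,dt$ --- and writing $\widehat X_t-\widehat Y_t=(X_t-Y_t)+\bigl(\int_0^1 Du(Y_t+r(X_t-Y_t))\,dr\bigr)(X_t-Y_t)$, one bounds the drift via: (a) the above dissipativity; (b) the cross term $\langle(AX_t-\nabla V(X_t))-(AY_t-\nabla V(Y_t)),u(X_t)-u(Y_t)\rangle$, estimated --- using the mean-value bound for $\nabla V$ of Lemma \ref{l38} and the $L^1(\nu)$-control of $\|(-A)^{1/2}Du\|$ along the path --- by $\tfrac{\omega}{2}\|(-A)^{1/2}(X_t-Y_t)\|^2$ plus a path-integrable multiple of $|X_t-Y_t|^2$; (c) the term $\lambda\langle u(X_t)-u(Y_t),\widehat X_t-\widehat Y_t\rangle\le C\lambda\|Du\|_\infty|X_t-Y_t|^2=O(|X_t-Y_t|^2)$, since $\lambda\|Du\|_\infty$ stays bounded; while the diffusion term is $\le\bigl(\int_0^1\|D^2u(Y_t+r(X_t-Y_t))\|\,dr\bigr)^2|X_t-Y_t|^2$, path-integrable by the $L^2(\nu)$-bound on $D^2u$ and occupation-measure domination. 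Localizing by stopping times $\tau_n\uparrow\infty$, taking expectations (killing the martingale part), using $|X_t-Y_t|\le2|\widehat X_t-\widehat Y_t|$ and Gronwall, one gets $\E|\widehat X_{t\wedge\tau_n}-\widehat Y_{t\wedge\tau_n}|^2=0$, hence $\widehat X\equiv\widehat Y$, hence $X\equiv Y$ by injectivity of $\Phi$, and by path-continuity $X$ and $Y$ are indistinguishable. Pathwise uniqueness together with the weak existence proved elsewhere in the paper then yields, by Yamada--Watanabe, strong existence, uniqueness in law, and that every solution is strong.

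The main obstacle --- and the reason for the hypotheses on $D^2V$ --- is the interaction of the unbounded drift $A-\nabla V$ with the merely $D(L)$-regular transform $u$. Two points are delicate. First, rigorously deriving the equation for $Y_t=\Phi(X_t)$: $u$ is not $C^2$, so Itô's formula must be obtained from the Fukushima decomposition of each $u_k$ for $X^V$ plus the Girsanov correction, together with the weak formulation of \eqref{SDE} (tested only against $h\in D(A)$) and the occupation-measure domination by $\nu$ needed to turn the $L^p(\nu)$ maximal-regularity bounds into almost-sure statements. Second, the cross term $\langle(AX_t-\nabla V(X_t))-(AY_t-\nabla V(Y_t)),u(X_t)-u(Y_t)\rangle$ cannot be bounded crudely, since neither $A(X_t-Y_t)$ nor $\nabla V(X_t)-\nabla V(Y_t)$ lies in a useful $L^2$-space; it is exactly here that one needs both the extra regularity $(-A)^{1/2}Du\in L^p(\nu)$ from maximal regularity and the mean-value control of $\nabla V$ from (H3)/(H3)' --- that is, precisely Lemma \ref{l38}.
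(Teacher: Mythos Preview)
Your overall architecture is right --- Zvonkin transform built on the $\nu$-symmetric reference dynamics, Girsanov to transfer $L^2(\nu)$-estimates to any solution, mean-value bounds on the differences, and Gronwall --- but the componentwise Kolmogorov equation you use is missing the decisive term, and without it two of your key bounds fail.

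The paper does \emph{not} solve $\lambda u_k-\mathcal{L}u_k-\langle B,\nabla u_k\rangle=B_k$. It solves
\[
(\lambda+\lambda_k)\,u_k-\mathcal{L}u_k-\langle B,\nabla u_k\rangle=B_k,
\]
i.e.\ the vector equation $(\lambda-A-\mathcal{L}_{A,B,V})U=B$ with the extra $-A$. This is not cosmetic. First, the Lipschitz bound on $U$ comes from the scalar estimate $\|u_k\|_{\mathrm{Lip}}^2\le \frac{4\pi}{\lambda+\lambda_k}\|B\|_\infty^2$, so that $\sum_k\|u_k\|_{\mathrm{Lip}}^2\le \|B\|_\infty^2\sum_k\frac{4\pi}{\lambda+\lambda_k}<\infty$ because $A^{-1}$ is trace class. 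With your equation one only gets $\|u_k\|_{\mathrm{Lip}}^2\le \frac{4\pi}{\lambda}\|B_k\|_\infty^2$, and $\sum_k\|B_k\|_\infty^2$ need not be finite for a bounded $H$-valued $B$; so your claim $\|Du\|_\infty\le\tfrac12$ (hence that $\Phi$ is bi-Lipschitz) is unjustified. Second, the same extra $\lambda_k$ is what yields $\int_H\sum_k\lambda_k|Du_k|^2\,d\nu<\infty$ in the paper's maximal regularity step; you need exactly this quantity to bound $|(-A)^{1/2}(u(X_t)-u(Y_t))|^2=\sum_k\lambda_k(u_k(X_t)-u_k(Y_t))^2$ in your cross term. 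Your identification of ``$(-A)^{1/2}Du$'' with ``$D^2u$ in HS norm'' is not correct: for a scalar $u_k$, $|(-A)^{1/2}Du_k|^2=\sum_j\lambda_j|D_ju_k|^2$, whereas $\|D^2u_k\|_{HS}^2=\sum_{j,l}|D_jD_lu_k|^2$; and even if you had $\sum_j\lambda_j|D_ju_k|^2\in L^1(\nu)$, summing over $k$ gives $\sum_{j,k}\lambda_j|D_ju_k|^2$, not the $\sum_k\lambda_k|Du_k|^2$ you need.

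In the paper the cross term $\langle A(X_t-Y_t),u(X_t)-u(Y_t)\rangle$ is not estimated at all: thanks to the $(\lambda+\lambda_k)u_k$ term in the transformed SDE, the two mixed pieces $-2\lambda_k(u_k(X_t)-u_k(Y_t))(X_t^k-Y_t^k)$ and $+2\lambda_k(X_t^k-Y_t^k)(u_k(X_t)-u_k(Y_t))$ cancel exactly, leaving only $-2\lambda_k(X_t^k-Y_t^k)^2\le 0$ and the residual $2\lambda_k(u_k(X_t)-u_k(Y_t))^2$. The latter is then controlled precisely by $\sum_k\lambda_k|Du_k|^2\in L^1(\nu)$. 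The remaining bad terms (the $\nabla V$-difference and the $|Du_k(X_t)-Du_k(Y_t)|^2$) are handled not by a stopped Gronwall with random coefficient, but by multiplying by $e^{-A_t}$ with $A_t$ chosen so that $-(\varphi(X_t)-\varphi(Y_t))^2\,dA_t$ cancels them identically; Gronwall is then applied to $\E[e^{-A_t}|\varphi(X_t)-\varphi(Y_t)|^2]$ with a \emph{deterministic} coefficient. Your localized-expectation Gronwall could be made to work once the integrability is in place, but the missing $-A$ in the vector equation is a genuine gap that breaks both the bi-Lipschitz property of $\Phi$ and the control of the $A$-cross term.
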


The proof is given in Section~\ref{sect proof uniqueness}. This result was
first proved in \cite{DFPR13} in the case $V=0$ (see also the more
recent \cite{DFPR14}, where also the case $V=0$, but with $B$ only
bounded on balls was treated) with a rather complex
proof based on the very nontrivial maximal regularity results in
$L^{p} ( H,\gamma ) $ for the Kolmogorov equation
\[
( \lambda-\mathcal{L}_{A,B} ) u=f
\]
associated to the SDE, where $\mathcal{L}_{A,B}$ is the operator formally
defined as
\[
\mathcal{L}_{A,B}u ( x ) =\tfrac{1}{2} \operatorname{ Tr} \bigl(
D^{2}u ( x ) \bigr) + \bigl\langle Ax+B ( x ) ,Du \bigr\rangle
\]
on suitable functions $u$, for $x\in D ( A ) $. Here, we
present a
much simpler proof which covers also the case $V\neq0$, based on
several new
ingredients.

First, in order to perform a suitable change of coordinates (analogous
to \cite{DFPR13} and \cite{DFPR14}), we use the family of Kolmogorov
equations
\[
( \lambda+\lambda_{i}-\mathcal{L}_{A,B,V} ) u=f
\]
or in vector form
%
\begin{equation}
( \lambda-A-\mathcal{L}_{A,B,V} ) U=F, \label{PDE vector}%
\end{equation}
where $\mathcal{L}_{A,B,V}$ is the operator formally defined as
\[
\mathcal{L}_{A,B,V}u ( x ) =\tfrac{1}{2} \operatorname{ Tr} \bigl(
D^{2}u ( x ) \bigr) + \bigl\langle Ax-\nabla V ( x ) +B ( x ) ,Du
\bigr\rangle
\]
on suitable functions $u$. The presence of the
term $\lambda_{i}u$ in the equation adds the advantages of the
resolvent of
$A$ [given by $ ( \lambda-A ) ^{-1}$] to those of the elliptic
regularity theory (given by $\mathcal{L}_{A,B}$). Moreover, we use the recent
maximal regularity results in $L^{2} ( H,\nu ) $ for the Kolmogorov
equation
\[
( \lambda-\mathcal{L}_{A,B,V} ) u=f
\]
proved in \cite{DL14}.

Second, thanks to the previous new Kolmogorov equation, we may apply a trick
based on It\^{o}'s formula and the multiplication by the factor $e^{-A_{t}}$
(see below the definition of $A_{t}$) which greatly simplifies the proof.

Third, we use Girsanov's theorem in a better form in the proof of the
main Lemma~\ref{l38}. The new proof of the lemma along with the previous two
innovations allow us to use only the $L^{2}$ theory of the Kolmogorov
equation, which is much simpler.

Fourth, we heavily use the theory of classical (gradient type)
Dirichlet forms on infinite dimensional state spaces.

For more background literature in the finite dimensional case following
the initiating work \cite{V80}, we refer
to \cite{DFPR13,DFPR14}. We only mention here the recent work~\cite{CJ13}, where SDEs with weakly differentiable drifts are studied.
In the case when in
\cite{CJ13} the diffusion matrix is constant and nondegenerate and if
the weakly differentiable drift is the (weak) gradient of a convex
function, our results generalize those in \cite{CJ13} from $\R^d$ to a
separable Hilbert space as state space, and to the case when a bounded
merely measurable drift part is added.
Finally, we mention the paper \cite{CD13} which concerns pathwise
uniqueness for some H\"older perturbation of reaction-diffusions
equations studied in spaces of continuous functions instead of square
integrable function.

The organization of the paper is as follows: Section~\ref{sec2} is devoted to
existence of solutions and Section~\ref{sec3} to the regularity theory of the
Kolmogorov operator~\eqref{PDE vector} above. The mentioned change of
coordinates is performed in Section~\ref{sec4}. Sections~\ref{sect proof uniqueness} and \ref{sec6} contain the
proof of our main Theorem~\ref{t4}. In Section~\ref{sec7}, we present applications.

We end this section by giving the definition of Sobolev spaces and some
notation. We consider an orthonormal basis $\{ e_k\dvtx  k\in\mathbb N\}$
of $H$ which diagonalizes $Q$ and set $Qe_k=\lambda_ke_k$ and $x_k =
\langle x, e_k\rangle$ for each $x\in H$, $k\in\mathbb N$. We denote
by $P_n$ the orthogonal projection on the linear span of $e_1, \ldots, e_n$.
For each $k\in\mathbb N \cup\{+\infty\}$, we denote by ${\mathcal
F}{\mathcal C}^k_b(H)$ the set of the cylindrical functions $\varphi(x)
= \phi(x_1, \ldots, x_n)$ for some $n\in\mathbb N$, with $\phi\in
C^k_b(\mathbb R^n)$.

For $\mu=\gamma$ or $\mu=\nu$, the Sobolev spaces $W^{1,2}(H,\mu)$ is
the completion of ${\mathcal F}{\mathcal C}^1_b(H)$ in the norm
\[
\|\varphi\|_{W^{1,2} (H, \mu)}^2 : = \int_H
\bigl(|\varphi|^2 + \|D\varphi \|^2\bigr)\,d\mu= \int
_H \Biggl(|\varphi|^2 + \sum
_{k=1}^\infty( D_k\varphi)^2
\Biggr) \,d\mu.
\]
The Sobolev spaces $W^{2,2}(H,\mu)$ is the completion of ${\mathcal
F}{\mathcal C}^2_b(H)$ in the norm
\begin{eqnarray*}
\| u \|_{W^{2,2} (H, \mu)}^2 &=& \| u
\|_{W^{1,2} (H, \mu
)}^2 + \int_H \operatorname{Tr} \bigl(
\bigl[D^2u\bigr]^2\bigr)\,d\mu
\\
&=& \| u \|_{W^{1,2} (H, \mu)}^2+ \sum_{h,k\in\mathbb N}
(D_{hk}u)^2 \,d\mu. %
\end{eqnarray*}
We denote the Borel $\sigma$-algebra on $H$ by $\mathcal B(H)$ and by
$B_b(H)$ the set of all bounded $\mathcal B(H)$-measurable functions
$\varphi\dvtx H\to\R$. We set for a function $\varphi\dvtx H\to\R$
\[
\|\varphi\|_\infty:=\sup_{x\in H}\bigl |\varphi(x)\bigr|.
\]
$I\dvtx H\to H$ denotes the identity operator on $H$. For $k\in\N$,
$C^k_b(H)$ denotes the set of all $\varphi\dvtx H\to\R$ of class $C^k$,
which together with all their derivatives up to order $k$ are bounded
and uniformly continuous. Furthermore, we reserve the symbol $D$ for
the closure of the derivative for $u\in\mathcal FC^1_b$ in $L^2(H,\mu
;H)$ for $\mu=\gamma$ or $\mu=\nu$. For the G\^ateaux derivative, we
use the symbol $\nabla$. Since they coincide on convex and Lipschitz
functions $u$, in the sense that $\nabla u$ is a $\gamma$- or $\nu
$-version of $Du$, we shall write $\nabla u$, whenever we want to
stress that we consider that special version.

\section{Existence}\label{sec2}
In this section, we shall prove that under conditions (H1)--(H4) from
the \hyperref[sec1]{Introduction}, which will be in force in all of this paper, that the SDE
\eqref{SDE} has a solution in the sense of Definition~\ref{Def sol}. We
start with the following proposition showing that the gradient $DV$ in
$L^2(H,\gamma;H)$ and the G\^ateaux derivative $\nabla V$ coincide
$\gamma$-a.e.
%
\begin{proposition}
\label{p2.1}
We have $V\in W^{1,2}(H,\gamma)$ and
\[
DV=\nabla V,\qquad \gamma\mbox{-a.e.}
\]
\end{proposition}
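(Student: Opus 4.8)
The goal is to show that the convex, lower-semicontinuous, lower-bounded function $V$, which by (H3) lies in $L^2(H,\gamma)$ together with its Gâteaux derivative $\nabla V \in L^2(H,\gamma;H)$, actually belongs to the Sobolev space $W^{1,2}(H,\gamma)$ and that its closed gradient $DV$ coincides $\gamma$-a.e. with $\nabla V$. Since $W^{1,2}(H,\gamma)$ is by definition the closure of $\mathcal{F}\mathcal{C}^1_b(H)$ in the graph norm of $D$, and since $D$ is closed, the natural route is approximation: produce smooth cylindrical functions $u_n \to V$ in $L^2(H,\gamma)$ whose gradients $Du_n$ converge in $L^2(H,\gamma;H)$, and identify the limit of the gradients with $\nabla V$.

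\textbf{Key steps.} First I would reduce to the finite-dimensional pieces via the projections $P_n$. Consider the conditional expectations $V_n := \mathbb{E}^\gamma[V \mid \sigma(x_1,\dots,x_n)]$, equivalently the functions obtained by integrating out the tail variables against the (Gaussian) marginal; since $\gamma$ is a product measure in the eigenbasis of $Q$, each $V_n$ is a function of $x_1,\dots,x_n$ only, and convexity is preserved under this partial integration, so $V_n$ is convex on $\mathbb{R}^n$. By the martingale convergence theorem $V_n \to V$ in $L^2(H,\gamma)$. Second, mollify each $V_n$ on $\mathbb{R}^n$ by convolution with a smooth Gaussian-type kernel $\rho_\delta$: the mollified functions $V_n^\delta$ are smooth, convex, and — after a further truncation of the second derivative or a cutoff to make them bounded with bounded derivatives — lie in $\mathcal{F}\mathcal{C}^1_b(H)$ (or can be further approximated by such). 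Convexity gives the one-sided monotonicity that lets one control $\nabla V_n^\delta$; more concretely, for convex functions the difference quotients converge monotonically to the Gâteaux derivative, so $\nabla V_n^\delta \to \nabla V_n$ pointwise, and the standard commutation $\nabla(\mathbb{E}^\gamma[V\mid \mathcal{F}_n]) = \mathbb{E}^\gamma[\nabla V \mid \mathcal{F}_n]$-type identity (valid because integration by parts against the Gaussian marginal turns a derivative on the conditioned function into a conditional expectation of the derivative, using that $V\in W^{1,2}$ on each finite-dimensional slice — which itself follows from $\nabla V \in L^2(H,\gamma;H)$ and convexity via the finite-dimensional Sobolev characterization of convex functions) shows $\nabla V_n = \mathbb{E}^\gamma[\nabla V \mid \mathcal{F}_n]$. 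Then $\nabla V_n \to \nabla V$ in $L^2(H,\gamma;H)$ by martingale convergence. Combining, a diagonal choice $u_k := V_{n_k}^{\delta_k} \in \mathcal{F}\mathcal{C}^1_b(H)$ gives $u_k \to V$ in $L^2(H,\gamma)$ and $Du_k \to \nabla V$ in $L^2(H,\gamma;H)$; closedness of $D$ then yields $V \in W^{1,2}(H,\gamma)$ and $DV = \nabla V$ $\gamma$-a.e.

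\textbf{Main obstacle.} The delicate point is the justification that $\nabla V$ (the Gâteaux derivative, defined only on the $\gamma$-full set $D_V$) really is the weak/Sobolev gradient on each finite-dimensional slice, i.e. that one may integrate by parts against the Gaussian marginals without boundary terms. In finite dimensions this is the classical fact that a convex function in $W^{1,2}_{\mathrm{loc}}$ whose a.e.-gradient is in $L^2(\gamma_n)$ is weakly differentiable with that gradient; the convexity is essential because it rules out singular parts of the distributional derivative (a convex function's distributional Hessian is a nonnegative measure, and the absolutely continuous part is exactly what integrability of $\nabla V$ controls), and it also provides, via the subgradient inequality $V(y) \ge V(x) + \langle \nabla V(x), y - x\rangle$, the uniform integrability and the pointwise monotone convergence of difference quotients needed to pass to the limit. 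One must also check that the mollification and truncation steps do not destroy the $L^2$-control of the gradients — here again the subgradient inequality, together with the bound $\int_H |\nabla V|^2\,d\gamma < \infty$ from (H3) and the boundedness of $|x|^p e^{-V(x)}$ from Remark~\ref{r0}, gives the requisite uniform bounds. I expect everything else (the two martingale convergences, the product structure of $\gamma$, the density of $\mathcal{F}\mathcal{C}^1_b(H)$) to be routine once this finite-dimensional weak-differentiability-of-convex-functions fact is in hand, for which one can cite the standard theory (e.g. the Alberti–Ambrosio–Preiss results referenced in Remark~\ref{r1}(ii), or Bogachev, \emph{Gaussian Measures}).
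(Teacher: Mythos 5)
Your strategy---finite-dimensional reduction via the conditional expectations $V_n=\E^\gamma[V\mid\sigma(x_1,\dots,x_n)]$, regularization of the resulting convex functions on $\R^n$, and then closedness of $D$---is genuinely different from the paper's, and much of it is sound: the identity $\nabla V_n=P_n\E^\gamma[\nabla V\mid\mathcal F_n]$ can be justified by the same monotone-difference-quotient argument the paper uses, and the worry about $\{V<\infty\}$ evaporates on finite-dimensional slices, since a convex subset of $\R^n$ of full Lebesgue measure is all of $\R^n$, so for a.e.\ fixed tail the slice function is finite, convex, hence locally Lipschitz. (For the same reason your appeal to the Hessian being a nonnegative measure is a red herring: the \emph{first-order} distributional derivative of a finite convex function has no singular part to rule out.)

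The genuine gap is the mollification step. Hypothesis (H3) gives $V\in L^{2+\epsilon}(H,\gamma)$ but only $\nabla V\in L^{2}(H,\gamma;H)$, and convolution is not a bounded operation on $L^2(\gamma_n)$: by Jensen and the Cameron--Martin formula,
\[
\int_{\R^n}\bigl|\nabla V_n*\rho_\delta\bigr|^2\,d\gamma_n\;\le\;\int\rho_\delta(y)\Bigl(\int_{\R^n}|\nabla V_n|^2\,a_{-y}\,d\gamma_n\Bigr)dy,
\]
and controlling the inner integral uniformly for small $y$ requires $|\nabla V_n|^2$ to be uniformly integrable against the shift densities $a_{-y}$, i.e.\ essentially $\nabla V_n\in L^{2+\epsilon'}(\gamma_n)$---which is not assumed. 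Neither the subgradient inequality nor the boundedness of $|x|^pe^{-V(x)}$ from Remark~\ref{r0} (which concerns $\nu$, not $\gamma$) supplies this, so the claimed $L^2(\gamma)$-convergence of the mollified gradients does not follow as written. The fix is to use a regularization dominated pointwise by $\nabla V$ itself: the Moreau--Yosida envelope $V_\alpha(x)=\inf_y\{V(y)+\tfrac1{2\alpha}|x-y|^2\}$ is convex and $C^{1,1}$ with $|\nabla V_\alpha|\le|\nabla V|$ and $\nabla V_\alpha\to\nabla V$ on $D_V$ (exactly the properties \eqref{e18} that the paper exploits in Section~3), so dominated convergence closes the argument; alternatively, smooth with the finite-dimensional Ornstein--Uhlenbeck semigroup, which contracts $L^2(\gamma_n)$ and commutes with the gradient up to $e^{tA_n}$.

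For contrast, the paper sidesteps all of this: it proves a Meyers--Serrin type identity, namely that $W^{1,2}(H,\gamma)$ (closure of cylinder functions) coincides with the maximal space $\widetilde W^{1,2}(H,\gamma)$ of functions whose directional difference quotients along each $e_i$ converge in $L^2(H,\gamma)$ (Lemmas~\ref{l2.3} and~\ref{l2.4}; the density step uses that the Ornstein--Uhlenbeck resolvent preserves $\mathcal FC^2_b$). Membership of $V$ in the maximal space is then a one-line convexity argument (Lemma~\ref{l2.2}): the difference quotient is squeezed between $\langle\nabla V(x),k\rangle$ and $V(x+k)-V(x)$, both in $L^2(\gamma)$---the exponent $2+\epsilon$ on $V$ paying exactly for the Cameron--Martin shift, the same currency your convolution step would need for $\nabla V$ but does not have.
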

The proof of Proposition~\ref{p2.1} requires a numbers of lemmas.
%
\begin{lemma}
\label{l2.2}
Let $k\in Q^{1/2}H$. Then
\[
\lim_{s\to0} \frac{V(\cdot+sk)-V(\cdot)}{s}=\langle\nabla V,k\rangle \qquad
\mbox{in } L^2(H,\gamma).
\]
\end{lemma}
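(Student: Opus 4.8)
The plan is to read off the $L^2(H,\gamma)$-convergence from two ingredients: the Cameron--Martin quasi-invariance of $\gamma$ under shifts by $k\in Q^{1/2}H$, which governs the integrability of the translates $V(\cdot\pm k)$ and preserves $\gamma$-null sets; and the convexity of $s\mapsto V(x+sk)$, which simultaneously produces the $\gamma$-a.e.\ pointwise convergence of the difference quotients \emph{and} an $L^2$-dominating function, after which dominated convergence closes the argument.

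First I would record what $k\in Q^{1/2}H$ buys. By the Cameron--Martin theorem the image of $\gamma$ under $x\mapsto x\pm k$ is equivalent to $\gamma$, with a Radon--Nikodym density of the form $\exp$ of a centered Gaussian random variable (with variance $|Q^{-1/2}k|^{2}$); in particular this density lies in $\bigcap_{p<\infty}L^{p}(H,\gamma)$. Two consequences follow. Since $\{V=\infty\}$ is $\gamma$-null (as $V\in L^{2}(H,\gamma)$) and shifts by $\pm k$ preserve $\gamma$-null sets, the set
$$
\Xi_{0}:=D_{V}\cap\{x:\,V(x+k)<\infty\}\cap\{x:\,V(x-k)<\infty\}
$$
has $\gamma(\Xi_{0})=1$. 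Moreover, using $V\in L^{2+\epsilon}(H,\gamma)$ from (H3), Hölder's inequality (with exponents $1+\tfrac{\epsilon}{2}$ and its conjugate) together with the change of variables yields $V(\cdot+k),\,V(\cdot-k)\in L^{2}(H,\gamma)$, hence $G:=|V(\cdot+k)|+|V(\cdot-k)|+2|V|\in L^{2}(H,\gamma)$.

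Next I would use convexity pointwise. Fix $x\in\Xi_{0}$. The map $s\mapsto\varphi_{x}(s):=V(x+sk)$ is real-valued and convex on $[-1,1]$, so $s\mapsto g_{x}(s):=s^{-1}\big(\varphi_{x}(s)-\varphi_{x}(0)\big)$ is non-decreasing on $[-1,1]\setminus\{0\}$; hence $g_{x}(-1)\le g_{x}(s)\le g_{x}(1)$ and therefore $|g_{x}(s)|\le\max\big(|g_{x}(1)|,|g_{x}(-1)|\big)\le G(x)$ for all $0<|s|\le 1$. Since $x\in D_{V}$, Remark~\ref{r1}(i) gives that $\lim_{s\to 0}g_{x}(s)$ exists and equals $\langle\nabla V(x),k\rangle\in\mathbb R$, and moreover $|\langle\nabla V(x),k\rangle|\le G(x)$ (clear from the bound above, or from $|\nabla V|^{2}\in L^{1}(\gamma)$). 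Thus for every $x\in\Xi_{0}$ and $0<|s|\le 1$ one has $\big|g_{x}(s)-\langle\nabla V(x),k\rangle\big|^{2}\le 4\,G(x)^{2}$ with $4G^{2}\in L^{1}(H,\gamma)$. Taking an arbitrary sequence $s_{n}\to 0$ and applying dominated convergence to $x\mapsto|g_{x}(s_{n})-\langle\nabla V(x),k\rangle|^{2}$ on $\Xi_{0}$ gives the asserted convergence in $L^{2}(H,\gamma)$.

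The only step that is not purely formal is the $L^{2}$-integrability of the translates $V(\cdot\pm k)$: this is exactly where the exponent $2+\epsilon$ (rather than just $2$) in Hypothesis (H3) is used, since the Cameron--Martin density, being $\exp$ of a Gaussian, is unbounded and no mere $L^{2}$ bound on $V$ would survive the change of variables. Everything else is soft once convexity is invoked, because a single convexity argument delivers both the pointwise limit (the Gâteaux derivative along $k$) and the dominating function $G$.
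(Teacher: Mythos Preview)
Your proof is correct and follows essentially the same approach as the paper: convexity bounds the difference quotient, Cameron--Martin plus the $L^{2+\epsilon}$ integrability of $V$ puts the bounds in $L^{2}(H,\gamma)$, and dominated convergence finishes. The only cosmetic difference is that the paper uses the one-sided sandwich $\langle\nabla V(x),k\rangle\le s^{-1}(V(x+sk)-V(x))\le V(x+k)-V(x)$ (invoking $|\nabla V|\in L^{2}(\gamma)$ for the lower bound) whereas you use the two-sided slope bound from $V(\cdot-k)$ and $V(\cdot+k)$ symmetrically; both are the same convexity argument.
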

\begin{pf} Let $x\in\{V<\infty\}$. Then by convexity for $s\in(0,1)$
\[
V(x+sk)\le sV(x+k)+(1-s)V(x),
\]
hence
%
\begin{equation}
\label{e2.1} \frac{V(x+sk)-V(x)}{s}\le V(x+k)-V(x).
\end{equation}
Since $k\in Q^{1/2}H$, by the Cameron--Martin theorem (see, e.g., \cite{D04},
Section~1.2.3) the function on the right as a function of $x$ is
in $L^2(H,\gamma)$, since by assumption (H3) $V\in L^{2+\varepsilon
}(H,\gamma)$.

Furthermore for $x\in D_V$ taking the limit $s\to0$ in \eqref{e2.1} we
find that
\[
\bigl\langle\nabla V(x),k \bigr\rangle\le V(x+k)-V(x).
\]
Replacing $k$ by $sk$ which is also in $Q^{1/2}H$, and dividing by $s$,
we obtain
%
\begin{equation}
\label{e2.2} \bigl\langle\nabla V(x),k \bigr\rangle\le\frac{V(x+sk)-V(x)}{s}.
\end{equation}
But the left-hand side as a function of $x$ is in $L^{2+\varepsilon
}(H,\gamma)$ by assumption (H3).
Hence, \eqref{e2.1} and \eqref{e2.2} imply the assertion of the lemma
by Lebesgue's dominated convergence theorem, since $\gamma(D_V)=1$.
\end{pf}

Before we proceed to Lemma~\ref{l2.3} we need to introduce the
following space:
%
\begin{eqnarray}
\label{e5'} %
&& \mathcal D_0:= \biggl\{
u\in L^2(H,\gamma)\dvtx \exists F_u\in
L^2(H,\gamma ;H) \mbox{ such that}
\nonumber
\\[-8pt]
\\[-8pt]
\nonumber
&&\hspace*{36pt}\lim_{s\to0} \frac{1}{s} \bigl[u(\cdot
+se_i)-u(\cdot)\bigr]= \langle\nabla F_u,e_i
\rangle \mbox{ in } L^2(H,\gamma), \forall i\in\mathbb N \biggr\}.
\end{eqnarray}
Set $\widetilde D u:=F_u$ for $u\in\mathcal D_0$. Then obviously
$\mathcal F C^2_b\subset\mathcal D_0$ and $D\varphi=\widetilde D\varphi
$ for all $\varphi\in\mathcal F C^2_b$.
%
\begin{lemma}
\label{l2.3}
\textup{(i)} Let $u\in\mathcal D_0$, $\varphi\in\mathcal F C^2_b$ and $i\in
\mathbb N$. Then
\begin{eqnarray*}
\int_H \bigl\langle
\widetilde D u(x),e_i \bigr\rangle\varphi(x) \gamma(dx)&=& -\int
_H u(x)\bigl\langle D\varphi(x),e_i \bigr
\rangle \gamma(dx)
\nonumber
\\[-8pt]
\\[-8pt]
\nonumber
&&{}+2\lambda_i \int_H u(x)\langle
e_i,x \rangle\varphi(x) \gamma(dx).
\end{eqnarray*}

\textup{(ii)} The operator $\widetilde D\dvtx \mathcal D_0\subset L^2(H,\gamma)\to
L^2(H,\gamma;H)$ is closable.
\end{lemma}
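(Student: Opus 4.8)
The plan is to obtain (i) directly from the defining property of $\mathcal D_0$ via a Cameron--Martin change of variables, and then to deduce (ii) quickly from (i). For (i), fix $u\in\mathcal D_0$, $\varphi\in\mathcal{F}\mathcal{C}^2_b$ and $i\in\mathbb N$. Since $\varphi$ is bounded and, by the definition of $\mathcal D_0$, $\tfrac1s\big(u(\cdot+se_i)-u(\cdot)\big)\to\langle\widetilde D u,e_i\rangle$ in $L^2(H,\gamma)$, we get
\[
\int_H\langle\widetilde D u(x),e_i\rangle\varphi(x)\,\gamma(dx)=\lim_{s\to0}\frac1s\int_H\big(u(x+se_i)-u(x)\big)\varphi(x)\,\gamma(dx).
\]
As $e_i$ is an eigenvector of $Q$ it lies in the Cameron--Martin space $Q^{1/2}(H)$, so the image of $\gamma$ under $x\mapsto x+se_i$ equals $\rho_s\gamma$ with $\rho_s$ the Cameron--Martin density; one has $\rho_0\equiv1$, and (using $Q=-\frac12A^{-1}$ and $-Ae_i=\lambda_ie_i$, so that $\langle e_i,x\rangle$ has variance $(2\lambda_i)^{-1}$ under $\gamma$) the map $s\mapsto\rho_s$ is differentiable at $s=0$ in $L^2(H,\gamma)$ with derivative the function $x\mapsto2\lambda_i\langle e_i,x\rangle$; this $L^2$-differentiability follows by dominated convergence from the explicit exponential form of $\rho_s$ and the exponential integrability of Gaussian linear functionals. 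Substituting $x\mapsto x-se_i$ in the first integral turns the difference quotient into $\int_H u(x)\,\frac{\varphi(x-se_i)\rho_s(x)-\varphi(x)}{s}\,\gamma(dx)$, and the decomposition
\[
\frac{\varphi(x-se_i)\rho_s(x)-\varphi(x)}{s}=\varphi(x-se_i)\,\frac{\rho_s(x)-1}{s}+\frac{\varphi(x-se_i)-\varphi(x)}{s}
\]
shows the integrand converges in $L^2(H,\gamma)$ to $2\lambda_i\langle e_i,x\rangle\varphi(x)-\langle D\varphi(x),e_i\rangle$: the first summand because $\varphi$ is bounded and $\frac{\rho_s-1}{s}$ converges in $L^2(H,\gamma)$, the second uniformly (to $-\langle D\varphi(\cdot),e_i\rangle$) because $\varphi\in C^2_b$. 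Pairing with $u\in L^2(H,\gamma)$ and letting $s\to0$ yields the identity in (i).

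For (ii), let $u_n\in\mathcal D_0$ with $u_n\to0$ in $L^2(H,\gamma)$ and $\widetilde D u_n\to G$ in $L^2(H,\gamma;H)$; we must show $G=0$. Applying (i) to each $u_n$, for every $\varphi\in\mathcal{F}\mathcal{C}^2_b$ and $i\in\mathbb N$,
\[
\int_H\langle\widetilde D u_n(x),e_i\rangle\varphi(x)\,\gamma(dx)=-\int_H u_n(x)\langle D\varphi(x),e_i\rangle\,\gamma(dx)+2\lambda_i\int_H u_n(x)\langle e_i,x\rangle\varphi(x)\,\gamma(dx).
\]
Letting $n\to\infty$, the left side tends to $\int_H\langle G(x),e_i\rangle\varphi(x)\,\gamma(dx)$, while both terms on the right tend to $0$, since $\langle D\varphi,e_i\rangle$ is bounded and $x\mapsto\langle e_i,x\rangle\varphi(x)$ lies in $L^2(H,\gamma)$ ($\varphi$ bounded, $\langle e_i,\cdot\rangle$ Gaussian). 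Hence $\int_H\langle G,e_i\rangle\varphi\,d\gamma=0$ for all $\varphi\in\mathcal{F}\mathcal{C}^2_b$ and all $i$; by density of $\mathcal{F}\mathcal{C}^2_b$ in $L^2(H,\gamma)$ this gives $\langle G,e_i\rangle=0$ $\gamma$-a.e. for each $i$, and since $\{e_i\}$ is an orthonormal basis of $H$ we conclude $G=0$ $\gamma$-a.e.

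The real work sits in (i): one has to legitimize the Cameron--Martin change of variables — which is possible precisely because $e_i$ lies in the Cameron--Martin space — and, above all, establish the $L^2(H,\gamma)$-differentiability at $s=0$ of the density $\rho_s$. This last point is where the unbounded term $2\lambda_i\langle e_i,x\rangle$, and with it the second integral on the right-hand side of the asserted identity, enters. Once (i) is available, (ii) is just a passage to the limit combined with the density of cylindrical functions in $L^2(H,\gamma)$.
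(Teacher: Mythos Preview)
Your proof is correct and follows essentially the same route as the paper's. Both arguments pass the $L^2$-difference quotient through a Cameron--Martin change of variables $x\mapsto x-se_i$, split the resulting product $\varphi(x-se_i)\rho_s(x)-\varphi(x)$ into a ``$\varphi$-derivative'' piece and a ``density-derivative'' piece, and let $s\to 0$; your splitting $\varphi(\cdot-se_i)(\rho_s-1)+(\varphi(\cdot-se_i)-\varphi)$ differs from the paper's $(\varphi(\cdot-se_i)-\varphi)\rho_s+\varphi(\rho_s-1)$ only cosmetically. Your treatment of (ii) is identical to the paper's.
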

\begin{pf}
(i) We have
%
\begin{eqnarray}
\label{e2.3} %
&& \int_H \bigl
\langle\widetilde D u(x),e_i \bigr\rangle\varphi(x) \gamma(dx)
\nonumber
\\[-8pt]
\\[-8pt]
\nonumber
&&\qquad=\lim_{s\to0}\frac{1}s \biggl[\int_H
u(x+se_i) \varphi(x) \gamma(dx)- \int_H u(x)
\varphi(x) \gamma(dx) \biggr].
\end{eqnarray}
But by the Cameron--Martin theorem the image measure $T_{se_i}(\gamma)$
of $\gamma$ under the translation $x\mapsto x+se_i$ is absolutely
continuous with respect to $\gamma$ with density (cf. \cite{D04}, Section~1.2.3)
\[
a_{se_i}(x)=e^{2s\lambda_i \langle e_i,x \rangle-s^2\lambda_i}.
\]
Hence, the difference of the two integrals on the right-hand side of
\eqref{e2.3} can be written as
\[
\int_H u(x)\bigl[\varphi(x-
\lambda_ie_i)-\varphi (x)\bigr]a_{se_i}(x)
\gamma(dx)+\int_H u(x)\varphi(x) \bigl(a_{se_i}(x)-1
\bigr) \gamma(dx).
\]
Hence, letting $s\to0$ in \eqref{e2.3} assertion (i) follows.

(ii) Suppose $u_n\in\mathcal D_0, n\in\mathbb N$, such that $
u_n\to0$ in $L^2(H,\gamma)$ and $\widetilde D u_n\to F$ in
$L^2(H,\gamma;H)$. Then for all $\varphi\in\mathcal FC^2_b, i\in
\mathbb N$, by (i)
\[
\int_H \bigl\langle F(x),e_i \bigr\rangle
\varphi(x) \gamma(dx)=\lim_{n\to\infty
}\int_H
\langle\widetilde D u_n,e_i \rangle\varphi(x)
\gamma(dx)=0.
\]
Hence, $F=0$.
\end{pf}

Let us denote the closure of $(\widetilde D,\mathcal D_0)$ again by
$\widetilde D$ and its domain by $\widetilde W ^{1,2}(H,\gamma)$.
Clearly, since $\mathcal FC^2_b\subset\mathcal D_0$ with $D\varphi
=\widetilde D\varphi$ for all $\varphi\in\mathcal FC^2_b$, it follows
that $ W ^{1,2}(H,\gamma)$ is a closed subspace of $\widetilde W
^{1,2}(H,\gamma)$. But in fact, they coincide.
%
\begin{lemma}
\label{l2.4}
$\mathcal FC^2_b$ is dense in $\widetilde W ^{1,2}(H,\gamma)$, hence
\[
W ^{1,2}(H,\gamma)=\widetilde W ^{1,2}(H,\gamma)
\]
and thus $Du=\widetilde Du$ for all $u\in W ^{1,2}(H,\gamma)$.
\end{lemma}

\begin{pf}
Let $u\in\widetilde W ^{1,2}(H,\gamma)$ such that
%
\begin{equation}
\label{e2.4} \int_H \langle\widetilde D \varphi,
\widetilde D u \rangle \,d\gamma+\int_H\varphi u \,d\gamma=0\qquad
\forall \varphi\in\mathcal FC^2_b.
\end{equation}
Since $\varphi(x)=\Phi(x_1,\ldots,x_N)$ for some $\Phi\in C^2_b(\mathbb
R^N)$ and $x_i:= \langle x,e_i \rangle$, $1\le i\le N$, we have that
\[
\langle\widetilde D \varphi, \widetilde D u \rangle=\sum
_{i=1}^N\langle D\varphi,e_i \rangle
\langle\widetilde D u,e_i \rangle
\]
and that $\langle D\varphi,e_i \rangle\in\mathcal FC^2_b$. Hence, by
Lemma~\ref{l2.3}, \eqref{e2.4} is equivalent to
%
\begin{equation}
\label{e2.5} -\int_H \bigl(2\mathcal L^{\mathrm{OU}}-1
\bigr)\varphi u \,d\gamma=0\qquad \forall \varphi\in\mathcal FC^2_b,
\end{equation}
where
\[
\mathcal L^{\mathrm{OU}}\varphi(x)=\tfrac{1}2 \operatorname{ Tr}
D^2\varphi(x)+ \langle x, ADx \rangle.
\]
But \eqref{e2.5} implies that $u=0$, since it is well known that
$\lambda-\mathcal L^{\mathrm{OU}}$ has dense range in $L^2(H,\gamma)$ for
$\lambda>0$. For the convenience of the reader we recall the argument:
The $C_0$-semigroup generated by the Friedrichs extension of the
symmetric operator $(\mathcal L^{\mathrm{OU}},\mathcal FC^2_b)$ on $L^2(H,\gamma
)$ is easily seen to be given by the following Mehler formula on
bounded, Borel functions $f\dvtx H\to\mathbb R$
%
\begin{equation}
\label{e8'} P_tf(x)=\int_H f
\bigl(e^{tA}x+y\bigr)N_{Q_t}(dy), \qquad t>0,
\end{equation}
where $N_{Q_t}$ is the centred Gaussian measure on $H$ with covariance operator
\[
Q_t:=\int_0^t
e^{2sA}\,ds,\qquad t>0.
\]
Obviously, $P_t(\mathcal FC^2_b)\subset\mathcal FC^2_b$, and also
\[
\biggl(\int_0^\infty e^{-\lambda t}
P_t \,dt \biggr) \bigl(\mathcal FC^2_b\bigr)
\subset\mathcal FC^2_b.
\]

But
\[
\bigl(\lambda-\mathcal L^{\mathrm{OU}} \bigr) ^{-1}=\int
_0^\infty e^{-\lambda
t} P_t \,dt
\]
as operators on $L^2(H,\gamma)$. Hence,
\[
\bigl(\lambda-\mathcal L^{\mathrm{OU}} \bigr) ^{-1}\bigl(\mathcal
FC^2_b\bigr)\subset \mathcal FC^2_b
\]
and so
\[
\mathcal FC^2_b\subset \bigl(1-\mathcal
L^{\mathrm{OU}} \bigr) \bigl(\mathcal FC^2_b\bigr).
\]
But $\mathcal FC^2_b$ is dense in $L^2(H,\gamma)$.
\end{pf}

Now we turn back to SDE \eqref{SDE}.

\begin{pf*}{Proof of Proposition~\ref{p2.1}} By (H2) and Lemma~\ref
{l2.2}, we have that $V\in\widetilde W ^{1,2}(H,\gamma)$ with $\nabla
V=\widetilde D V$, $\gamma$-a.e. Hence, Lemma~\ref{l2.4} implies the
assertion.
\end{pf*}

Let us consider the case when in SDE \eqref{SDE} we have that $B=0$,
that is,
%
\begin{eqnarray}
\label{e2.6} %
dX_t&=&
\bigl(AX_t-\nabla V(X_t)\bigr)\,dt+dW(t),
\nonumber
\\[-8pt]
\\[-8pt]
\nonumber
X_0&=&z,
\end{eqnarray}
where for convenience we extend $\nabla V\dvtx D_V\to H$ by zero to the
whole space $D_V$. The case for general $B$ then follows easily from
Girsanov's theorem.

To solve \eqref{e2.6} in the (probabilistically) weak sense, we shall
use \cite{AR91}, that is, the theory of Dirichlet forms, more precisely
the so-called ``classical (gradient type)'' Dirichlet forms, which for
the measure $\nu$ from the \hyperref[sec1]{Introduction} is just
\[
\mathcal E_\nu(u,v):=\int_H \bigl\langle
Du(x),Dv(x) \bigr\rangle \nu(dx),\qquad  u,v\in D(\mathcal E_\nu):=W^{1,2}(H,
\nu).
\]
But the whole theory has been developed for arbitrary finite measures
$m$ on $(H,\mathcal B(H))$ which satisfy an integration by parts
formula (see \cite{AR91,MR92} and the references therein) or
even more generally for finite measures $m$ for which $D\dvtx \mathcal
FC_b^\infty\subset L^2(H,m)\to L^2(H,m;H)$ is closable (see \cite
{AR89,AR90,MR92}). In particular, we can also take
$m:=\gamma$. Let us recall the following result which is crucial for
the theory of classical Dirichlet forms which we shall formulate for
$\nu$, but holds for every $m$ as above. For its formulation, we need
the notion of an ``$\mathcal E_\nu$-nest'': Let $F_n\subset H$, $n\in
\mathbb N$, be an increasing sequence of closed sets and define for
$n\in\mathbb N$
\[
D(\mathcal E_\nu)|_{F_n}:=\bigl\{u\in D(\mathcal
E_\nu)\dvtx u=0, \nu\mbox{-a.e. on } H\setminus F_n
\bigr\}.
\]
Then $(F_n)_{n\in\mathbb N}$ is called an \textit{$\mathcal E_\nu$-nest} if
\[
\bigcup_{k=1}^\infty D(\mathcal
E_\nu)|_{F_n} \qquad\mbox{is dense in } D(\mathcal
E_\nu),
\]
with respect to the norm
\[
\mathcal E_{\nu,1}(u,u)^{1/2}:= \bigl( \mathcal
E_\nu (u,u)+|u|^2_{L^2(H,\nu)} \bigr)^{1/2},\qquad
u\in D(\mathcal E_\nu),
\]
that is, with respect to the norm in $W^{1,2}(H,\nu)$.

Then the crucial result already mentioned is the following.
%
\begin{theorem}
\label{t2.5}
There exists an $\mathcal E_\nu$-nest consisting of compact sets.
\end{theorem}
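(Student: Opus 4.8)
The plan is to construct the nest by exhausting $H$ with compact sets adapted to the Gaussian structure and then controlling the cut-offs in the $W^{1,2}(H,\nu)$-norm. Since $A^{-1}$ is of trace class, $\gamma$ (hence $\nu$, being equivalent to $\gamma$ with bounded density $\frac1Z e^{-V}$, cf. Remark \ref{r0}) is a Radon measure on $H$, so it is tight: there is an increasing sequence of compact sets $K_n\subset H$ with $\nu(H\setminus K_n)\to 0$. A convenient concrete choice uses the eigenbasis $\{e_k\}$ of $Q$: for suitable $R_n\uparrow\infty$ and a rapidly increasing sequence of integers $m_n$, set
\[
K_n:=\Big\{x\in H:\ \sum_{k=1}^\infty \lambda_k^{-\alpha}x_k^2\le R_n^2\Big\},
\]
for some fixed $\alpha\in(0,1)$; by the trace-class assumption $\sum_k\lambda_k^{1-\alpha}<\infty$, so $\int_H\sum_k\lambda_k^{-\alpha}x_k^2\,\gamma(dx)=\sum_k\lambda_k^{1-\alpha}<\infty$, whence $K_n$ is compact (bounded in a space compactly embedded in $H$) and $\gamma(K_n)\uparrow 1$.

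Next I would produce, for each $n$, a Lipschitz cut-off function $\varphi_n\in W^{1,2}(H,\nu)$ with $0\le\varphi_n\le 1$, $\varphi_n=1$ on $K_n$, $\varphi_n=0$ off $K_{n+1}$ (or off a slightly enlarged compact set $\widetilde K_n$), and with $\|D\varphi_n\|\le L_n$ uniformly in $x$. Taking $\varphi_n(x):=g_n\big(\sum_k\lambda_k^{-\alpha}x_k^2\big)$ for a smooth $g_n:[0,\infty)\to[0,1]$ that is $1$ on $[0,R_n^2]$ and $0$ on $[R_{n+1}^2,\infty)$ does the job, after checking that $x\mapsto\sum_k\lambda_k^{-\alpha}x_k^2$ lies (locally) in $W^{1,2}(H,\nu)$ with $\nu$-square-integrable gradient — which follows from $\sum_k\lambda_k^{2-2\alpha}\int x_k^2\,\gamma(dx)<\infty$. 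Given $u\in D(\mathcal E_\nu)=W^{1,2}(H,\nu)$, the truncations $u_n:=\varphi_n u$ then lie in $D(\mathcal E_\nu)|_{\widetilde K_n}$, and one has $u_n\to u$ in $L^2(H,\nu)$ by dominated convergence and $Du_n=\varphi_n Du+u\,D\varphi_n\to Du$ in $L^2(H,\nu;H)$ provided the cross term $u\,D\varphi_n\to 0$; since $D\varphi_n$ is supported on $\widetilde K_n\setminus K_n$, which has $\nu$-measure tending to $0$, one needs $L_n^2\,\nu(\widetilde K_n\setminus K_n)\to 0$. This is arranged by choosing the gaps $R_{n+1}-R_n$ large enough (so $L_n$ small) relative to how fast $\nu(H\setminus K_n)$ decays — a standard diagonal choice. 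That gives $\bigcup_n D(\mathcal E_\nu)|_{\widetilde K_n}$ dense in $W^{1,2}(H,\nu)$, i.e. $(\widetilde K_n)$ is an $\mathcal E_\nu$-nest of compact sets.

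The main obstacle is the cross-term estimate $\|u\,D\varphi_n\|_{L^2(H,\nu;H)}\to 0$ for a single fixed $u\in W^{1,2}$: one cannot bound $u$ uniformly, only in $L^2(H,\nu)$, so the crude bound $\|u\,D\varphi_n\|^2\le \|D\varphi_n\|_\infty^2\int_{\widetilde K_n\setminus K_n}u^2\,d\nu$ requires simultaneously that $\int_{\widetilde K_n\setminus K_n}u^2\,d\nu\to 0$ (automatic, by integrability of $u^2$ and $\nu(\widetilde K_n\setminus K_n)\to 0$) \emph{and} that this decay is not destroyed by the blow-up of $\|D\varphi_n\|_\infty$. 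Since the radial variable $r(x)=\sum_k\lambda_k^{-\alpha}x_k^2$ has finite $\nu$-integral, the conditional mass $\nu(r\in[R_n^2,R_{n+1}^2])$ can be made to decay as fast as we like by pushing $R_n$ out, while $\|D\varphi_n\|_\infty\le C/(R_{n+1}^2-R_n^2)\cdot\sup_{\widetilde K_n}|Dr|$ grows only polynomially in $R_n$; hence a rapidly growing choice of $R_n$ forces $\|D\varphi_n\|_\infty^2\,\nu(r\in[R_n^2,R_{n+1}^2])\to 0$, which is exactly what is needed. Alternatively, one can invoke the abstract capacity-theoretic criterion: since $\gamma$ (and $\nu$) is tight and the form $\mathcal E_\nu$ is quasi-regular (as is known for classical Dirichlet forms on Hilbert spaces satisfying the present integrability hypotheses, cf. \cite{AR91}, \cite{MR92}), every tight sequence of compacts exhausting $H$ up to sets of vanishing capacity yields a nest; but the explicit construction above is cleaner and self-contained.
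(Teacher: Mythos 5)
The paper disposes of this statement by citing the tightness--of--capacity theorem of R\"ockner--Schmuland \cite{RS92} together with \cite[Chap.~IV, Prop.~4.2]{MR92}; you are instead attempting the underlying construction from scratch. That is a legitimate route in principle, but as written it has concrete gaps. First, the claim that the trace-class assumption gives $\sum_k\lambda_k^{1-\alpha}<\infty$ for a \emph{fixed} $\alpha\in(0,1)$ is false: if $\lambda_k\sim k^{-(1+\varepsilon)}$ with $\varepsilon$ small, then $\sum_k\lambda_k^{1-\alpha}$ diverges unless $\alpha<\varepsilon/(1+\varepsilon)$. One must instead choose weights $a_k\uparrow\infty$ adapted to the given summable sequence, with $\sum_k a_k\lambda_k<\infty$ (always possible), and use $r(x)=\sum_k a_kx_k^2$; the same repair is needed for the square-integrability of $Dr$, where the correct condition is $\sum_k a_k^2\lambda_k<\infty$ (your exponent bookkeeping $\sum_k\lambda_k^{2-2\alpha}\int x_k^2\,\gamma(dx)$ is also off). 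Second, and more seriously, your treatment of the cross term rests on $\|D\varphi_n\|_\infty\le C(R_{n+1}^2-R_n^2)^{-1}\sup_{\widetilde K_n}|Dr|$ with the sup finite and ``growing only polynomially in $R_n$''. But $|Dr(x)|^2=4\sum_k\lambda_k^{-2\alpha}x_k^2$ is \emph{not} controlled by $r(x)=\sum_k\lambda_k^{-\alpha}x_k^2$ (the ratio of the weights tends to infinity), so $\sup_{\widetilde K_n}|Dr|=\infty$ in general: $r$ is not Lipschitz on $H$, not even on the ellipsoids $K_n$, and the pointwise bound on which your ``diagonal choice'' of $R_n$ relies is unavailable. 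The standard repair is to first reduce to $u$ bounded (bounded elements are dense in $W^{1,2}(H,\nu)$ via normal contractions), so that the cross term is estimated by $\|u\|_\infty^2\int_H|D\varphi_n|^2\,d\nu$, and then to show $\int_H|D\varphi_n|^2\,d\nu\to0$ using only the $L^2(\nu)$-integrability of $Dr$ and the shrinking of the annuli $\{R_n^2\le r\le R_{n+1}^2\}$ --- this is essentially the argument of \cite{RS92}, but it is not the argument you wrote.

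Your fallback paragraph is also not a proof: quasi-regularity of $\mathcal E_\nu$ is \emph{defined} by the existence of a compact nest, so invoking it here is circular; and a sequence of compacts whose complements have small \emph{measure} need not have complements of small \emph{capacity} --- that implication is precisely the content of the theorem being proved.
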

\begin{pf}
See \cite{RS92} and \cite{MR92}, Chapter IV, Proposition~4.2.
\end{pf}

Let us denote $(K_n)_{n\in\mathbb N}$ this $\mathcal E_\nu$-nest
consisting of compacts. This theorem says that $(\mathcal E_\nu
,D(\mathcal E_\nu))$ is
completely determined in a $K_\sigma$ set of $H$. Then it follows from
the general theory that $(\mathcal E_\nu,D(\mathcal E_\nu))$ is \emph{quasi-regular}, hence has an associate Markov process which solves
(SDE) \eqref{e2.6} and this Markov process also lives on this
$K_\sigma$ set $\bigcup_{n=1}^\infty K_n$, that is, the first hitting
times $\sigma_{H\setminus K_n}$ of $H\setminus K_n$ converge to
infinity as $n\to\infty$.

The precise formulations of these facts is the contents of Theorems \ref
{t2.6} and~\ref{t2.7} below. We need one more notion: A set $N\subset
H$ is called $\mathcal E_\nu$-\emph{exceptional}, if it is contained in
the complement of an $\mathcal E_\nu$-nest. Clearly, this complement
has $\nu$-measure zero, hence $\nu(N)=0$ if $N\in\mathcal B(H)$.
%
\begin{theorem}
\label{t2.6}
There exists $S\in\mathcal B(H)$ such that $H\setminus S$ is $\mathcal
E_\nu$-exceptional [hence $\nu(H\setminus S)=0$] and for every $z\in
S$ there exists a probability space $(\Omega,\mathcal F,P_z)$ equipped
with a normal filtration $(\mathcal F_t)_{t\ge0}$, independent real
valued Brownian motions $W^k_t, t\ge0,  k\in\mathbb N$, on $(\Omega
,\mathcal F,P_z)$ and a continuous $H$-valued $(\mathcal F_t) $-adapted process $X_t, t\ge0$, such that $P_z$-a.s.:
\begin{longlist}[(iii)]
\item[(i)] $X_t\in S\  \forall t\ge0$,

\item[(ii)] $\int_H\mathbb E_{P_z} [ \int_0^t|\nabla
V(X_s)|^2\,ds  ]\nu(dz)<\infty$ and
$\mathbb E_{P_z} [ \int_0^t1_{H\setminus D_V}(X_s) \,ds
 ] =0 \forall t\ge0$,

\item[(iii)] $\langle e_k,X_t \rangle= \langle e_k,z
\rangle+\int_0^t( \langle Ae_k, X_s \rangle+ \langle e_k,\nabla V(X_s)
\rangle)\,ds+W^k_t, t\ge0,
k\in\mathbb N$.
\end{longlist}

Hence (by density), we have a solution of \eqref{e2.6} in the
sense of Definition~\ref{Def sol}. Furthermore, up to completing
$\mathcal F_t$ w.r.t. $P_z$, $(\Omega,\mathcal F)$, $X_t, t\ge0$,
and $(\mathcal F_t)$ can be taken canonical, independent of $z\in S$
and then
\[
\mathbf{M}:=\bigl(\Omega,\mathcal F, (\mathcal F_t)_{t\ge0},
(X_t)_{t\ge0},(P_z)_{z\in S}\bigr),
\]
forms a conservative Markov process, with invariant measure $\nu$.
\end{theorem}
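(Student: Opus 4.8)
The plan is to derive Theorem~\ref{t2.6} from the general theory of quasi-regular Dirichlet forms (as in \cite{AR91}, \cite{MR92}) applied to the classical gradient Dirichlet form $(\mathcal E_\nu, W^{1,2}(H,\nu))$, using Theorem~\ref{t2.5} as the key structural input. First I would check that $(\mathcal E_\nu, W^{1,2}(H,\nu))$ is a symmetric Dirichlet form on $L^2(H,\nu)$: symmetry and closedness are clear by construction, and the Markovian (contraction) property follows because $W^{1,2}(H,\nu)$ is stable under composition with normal contractions $T_\varepsilon$ of $\R$, since these operate on the cylinder approximations. Then, by Theorem~\ref{t2.5} there is an $\mathcal E_\nu$-nest $(K_n)$ of compact sets, which is precisely the condition for $(\mathcal E_\nu, W^{1,2}(H,\nu))$ to be quasi-regular in the sense of \cite[Chap.~IV]{MR92} (the separating-function and exceptional-sequence conditions are automatic here because $H$ is a separable Hilbert space and $\mathcal F C^\infty_b$ separates points and lies in the domain). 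By the general correspondence theorem \cite[Chap.~IV, Thm.~3.5 and Chap.~VI]{MR92}, there is then an $m$-tight special standard (in particular, right) Markov process $\mbox{\boldmath $M$}=(\Omega,\mathcal F,(\mathcal F_t),(X_t),(P_z))$ with transition semigroup $\nu$-versioning the semigroup of $\mathcal E_\nu$, properly associated with the form; since the constant function $1$ lies in the domain with $\mathcal E_\nu(1,1)=0$, the process is conservative and $\nu$ is invariant.

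Next I would identify the process as a solution of \eqref{e2.6}. The generator $\mathcal L_\nu$ of the form acts on $\varphi\in\mathcal F C^2_b$ by the integration-by-parts formula for $\nu(dx)=\frac1Z e^{-V(x)}\gamma(dx)$: using Proposition~\ref{p2.1} (so that $\nabla V = DV$ $\gamma$-, hence $\nu$-a.e.) one gets
\[
\mathcal L_\nu \varphi(x) = \tfrac12\,\mathrm{Tr}\,D^2\varphi(x) + \langle Ax - \nabla V(x), D\varphi(x)\rangle,
\]
at least in the weak ($L^2(H,\nu)$) sense, and in particular $\langle e_k,\cdot\rangle$ (suitably localized, or via approximation by $\mathcal F C^2_b$ functions $\phi_N$ with $\phi_N(x)=x_k$ on large balls) is in the extended domain with $\mathcal L_\nu$ applied to it equal to $\langle Ae_k, x\rangle - \langle e_k,\nabla V(x)\rangle$. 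Then Fukushima's decomposition / the martingale problem formulation for quasi-regular Dirichlet forms (\cite[Chap.~VI]{MR92}, or \cite{AR91} directly for this class) gives that under $P_z$, for quasi-every $z$,
\[
\langle e_k, X_t\rangle = \langle e_k,z\rangle + \int_0^t\big(\langle Ae_k,X_s\rangle + \langle e_k,\nabla V(X_s)\rangle\big)\,ds + M^k_t,
\]
where $M^k_t$ is a martingale additive functional with bracket $\langle M^k\rangle_t = t$, so by L\'evy's characterization (and computing the cross-brackets $\langle M^k, M^j\rangle_t = \delta_{kj}t$, which comes from the carr\'e du champ $\Gamma(\langle e_k,\cdot\rangle, \langle e_j,\cdot\rangle) = \delta_{kj}$) the $W^k := M^k$ are independent standard Brownian motions. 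This yields (iii). Property (ii) is exactly the statement that the energy-measure/finite-energy bookkeeping is finite, which follows from assumption (H3) ($\nabla V\in L^2(H,\gamma)=L^2$-equiv.$L^2(H,\nu)$ up to the density bound from Remark~\ref{r0}) together with the invariance of $\nu$: $\E_{P_z}\int_0^t |\nabla V(X_s)|^2\,ds$ integrated in $\nu(dz)$ equals $t\int_H|\nabla V|^2\,d\nu<\infty$; the second part of (ii) follows since $\nu(H\setminus D_V)=0$ and $\nu$ is the one-dimensional marginal law of $X_s$. Property (i), that $X_t$ stays in a fixed Borel set $S$ with $\nu$-negligible (indeed $\mathcal E_\nu$-exceptional) complement, is built into the construction: one takes $S$ to be the ``set of good starting points'' in the complement of the exceptional set where the above all holds, intersected with $\bigcup_n K_n$ using the $\mathcal E_\nu$-nest, and uses that the first hitting times of $H\setminus K_n$ tend to $\infty$ $P_z$-a.s. for $z\in S$; restricting the state space to $S$ is then harmless.

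Finally, the passage ``by density, a solution of \eqref{e2.6} in the sense of Definition~\ref{Def sol}'' consists in checking item (i) of Definition~\ref{Def sol} (which is (ii) here) and item (ii) of Definition~\ref{Def sol}: the identity in (iii) holds for all $e_k$, hence by linearity for $h$ in the span of the $e_k$, and then for general $h\in D(A)$ by approximating $h$ by $P_n h$ and $Ah$ by $AP_nh=P_nAh$ in $H$, using dominated convergence for the $ds$-integrals (controlled by the $L^2$-bounds just established) and $L^2(\Omega)$-convergence of the stochastic/Brownian terms. The canonical realization and $z$-independence of $(\Omega,\mathcal F,(\mathcal F_t),(X_t))$ is standard for (right) Markov processes: realize everything on path space $C([0,\infty);H)$ (or its completion), with $X_t$ the coordinate process and $(\mathcal F_t)$ its completed natural filtration. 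The main obstacle I anticipate is not any single deep step but the careful handling of the \emph{exceptional sets}: the generator computation and the martingale identities hold only quasi-everywhere / for quasi-every starting point, and one must verify that the various $\mathcal E_\nu$-exceptional null sets (from quasi-regularity, from the identification of $\mathcal L_\nu$ on cylinder functions, from Fukushima's decomposition of each $\langle e_k,\cdot\rangle$ — a countable family, which is fine — and from the nest $(K_n)$) can be assembled into a single Borel set $H\setminus S$ that is $\mathcal E_\nu$-exceptional, and that $S$ can be taken invariant for $\mbox{\boldmath $M$}$ so that restricting to $S$ still leaves a conservative Markov process; this is where the ``classical Dirichlet form'' machinery of \cite{AR91} and \cite[Chap.~IV--VI]{MR92} does the real work.
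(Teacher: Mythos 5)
Your proposal is correct and takes essentially the same route as the paper: the paper's entire proof of this theorem is the one-line citation ``the assertion follows from \cite[Theorem 5.7]{AR91}'', and what you have written is in substance an outline of the proof of that cited result (quasi-regularity via the compact $\mathcal E_\nu$-nest of Theorem \ref{t2.5}, the associated $m$-tight special standard process, identification of the generator on cylinder functions by integration by parts using Proposition \ref{p2.1}, Fukushima decomposition plus L\'evy's characterization for (iii), and invariance of $\nu$ for (ii)). The only difference is that you reconstruct the argument where the paper delegates it entirely to the reference.
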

\begin{pf}
The assertion follows from \cite{AR91}, Theorem~5.7.
\end{pf}

For later use, we define the Borel set
%
\begin{equation}
\label{e9'} H_V:= \biggl\{z\in H\dvtx \E_{P_z} \biggl[
\int_0^T\bigl|\nabla V(X_s)\bigr|^2\,ds
\biggr]<\infty \biggr\}
\end{equation}
and note that by Theorem~\ref{t2.6}(ii) we have $\nu(H_V)=\gamma(H_V)=1$.

In fact, by the convexity of $V$ we also have uniqueness for the
solutions to~\eqref{e2.6}. We recall that the sub-differential
$\partial V$ of $V$ is monotone (which is trivial to prove; see, e.g.,
\cite{P93}, Example~2.2a) and that for $x\in D_V$, $\partial
V(x)=\nabla V(x)$; see, for example, \cite{Ba10}, page~8. Hence, we have
%
\begin{equation}
\label{e2.6*} \bigl\langle\nabla V(x)-\nabla V(y),x-y \bigr\rangle\ge0,\qquad x,y\in
D_V.
\end{equation}
%
\begin{theorem}
\label{t2.6*}
Let $S$ be as in Theorem~\ref{t2.6} and $z\in S$. Then pathwise
uniqueness holds for all solutions in the sense of Definition~\ref{Def
sol} for SDE \eqref{e2.6}. In particular, uniqueness in law holds for
these solutions.
\end{theorem}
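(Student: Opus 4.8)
The plan is to carry out the classical dissipativity/monotonicity energy estimate, made rigorous through finite-dimensional projections. Let $X=(X_t)$ and $Y=(Y_t)$ be two solutions of \eqref{e2.6} in the sense of Definition \ref{Def sol}, defined on the same filtered probability space, driven by the same cylindrical Brownian motion $W$, and with $X_0=Y_0=z\in S$. Set $Z_t:=X_t-Y_t$. Subtracting the two weak equations in Definition \ref{Def sol}(ii) (with $B=0$), the Brownian contributions cancel, so for every $h\in D(A)$ and $t\ge0$ one has $P$-a.s.
\begin{equation*}
\langle Z_t,h\rangle=\int_0^t\Big(\langle Z_s,Ah\rangle-\langle\nabla V(X_s)-\nabla V(Y_s),h\rangle\Big)\,ds,
\end{equation*}
while $s\mapsto Z_s$ is $P$-a.s.\ continuous in $H$ with $Z_0=0$. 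The goal is to show $Z\equiv0$ $P$-a.s.; then pathwise uniqueness holds, and, by the Yamada--Watanabe theorem together with the weak existence furnished by Theorem \ref{t2.6}, also uniqueness in law.

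I would then use the orthonormal basis $\{e_k\}$ diagonalizing $Q$: since $Qe_k=\lambda_ke_k$ and $Q=-\frac{1}{2}A^{-1}$ we have $e_k\in D(A)$ and $Ae_k=-\mu_ke_k$ with $\mu_k=(2\lambda_k)^{-1}\ge\omega>0$ by (H1). Taking $h=e_k$ and writing $Z_t^k:=\langle Z_t,e_k\rangle$, the integrand above (as a function of $s$) lies in $L^1_{\mathrm{loc}}$ by Definition \ref{Def sol}(i), so $t\mapsto Z_t^k$ is absolutely continuous and, for a.e.\ $t$,
\begin{equation*}
\frac{d}{dt}(Z_t^k)^2=-2\mu_k(Z_t^k)^2-2Z_t^k\,\langle\nabla V(X_t)-\nabla V(Y_t),e_k\rangle.
\end{equation*}
Summing over $k\le n$ and using $\sum_{k\le n}\mu_k(Z_t^k)^2\ge\omega|P_nZ_t|^2\ge0$ together with $P_nZ_0=0$, I obtain
\begin{equation*}
|P_nZ_t|^2\le-2\int_0^t\langle\nabla V(X_s)-\nabla V(Y_s),P_nZ_s\rangle\,ds.
\end{equation*}
Writing $P_nZ_s=Z_s-(I-P_n)Z_s$ and invoking the monotonicity \eqref{e2.6*} (applicable because $X_s,Y_s\in D_V$ for a.e.\ $s$), the $Z_s$-contribution is $\le0$, whence
\begin{equation*}
|P_nZ_t|^2\le2\int_0^t\big\langle\nabla V(X_s)-\nabla V(Y_s),(I-P_n)Z_s\big\rangle\,ds.
\end{equation*}

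Finally I would let $n\to\infty$. The left-hand side increases to $|Z_t|^2$. On the right-hand side, $\{Z_s:0\le s\le t\}$ is a compact subset of $H$ (continuous image of $[0,t]$), hence $\sup_{s\le t}|(I-P_n)Z_s|\to0$, and by Cauchy--Schwarz the integral is bounded by $2\sup_{s\le t}|(I-P_n)Z_s|\int_0^t(|\nabla V(X_s)|+|\nabla V(Y_s)|)\,ds$. After localizing by stopping times $\tau_m\uparrow\infty$ that keep this time-integral finite, the right-hand side tends to $0$, yielding $|Z_{t\wedge\tau_m}|=0$ for every $m$ and $t$, i.e.\ $X=Y$.

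I expect the passage to the limit to be the delicate point: for a general solution in the sense of Definition \ref{Def sol} only $\int_0^t|\langle\nabla V(X_s),h\rangle|\,ds<\infty$ is assumed a priori, so one must first secure enough $H$-valued integrability of $\nabla V$ along the trajectories to justify the localization above (or, alternatively, reorganize the coordinate-wise estimate so that the monotonicity term absorbs the $(I-P_n)$-remainder uniformly in $n$). The remaining ingredients — absolute continuity of the coordinate processes, the sign of $\langle Ax,x\rangle$, and the monotonicity of $\nabla V$ — are then routine, and the conclusion about uniqueness in law is immediate from Yamada--Watanabe once weak existence (Theorem \ref{t2.6}) and pathwise uniqueness are in hand.
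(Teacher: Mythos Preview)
Your approach --- the monotonicity-based energy estimate, made rigorous via finite-dimensional projections, followed by Yamada--Watanabe --- is exactly what the paper intends: its own proof consists of one sentence invoking \eqref{e2.6*} and citations to \cite{DRW09} and \cite{MR10} for the details you are supplying.

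The technical point you single out is real and is the only place where care is needed. Definition~\ref{Def sol}(i) guarantees merely $\int_0^T|\langle\nabla V(X_s),h\rangle|\,ds<\infty$ for each $h\in D(A)$, not $\int_0^T|\nabla V(X_s)|\,ds<\infty$; your splitting $\langle P_nZ_s,G_s\rangle=\langle Z_s,G_s\rangle-\langle(I-P_n)Z_s,G_s\rangle$ (with $G_s:=\nabla V(X_s)-\nabla V(Y_s)$) and the subsequent Cauchy--Schwarz/dominated-convergence step both presuppose the $H$-valued integrability, so as written the passage to the limit is not yet justified. The references the paper cites carry out the same computation in frameworks where this integrability is part of the solution concept (mild or variational solutions with $\nabla V(X_\cdot)\in L^2([0,T];H)$), so the difficulty does not arise there. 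In the present paper every solution that is actually constructed or used afterwards (Theorem~\ref{t2.6}(ii), the set $H_V$ in \eqref{e9'}, Theorem~\ref{t2.10}) does satisfy $\int_0^T|\nabla V(X_s)|^2\,ds<\infty$ a.s., which is why the point is harmless for the rest of the paper; but a fully self-contained argument covering \emph{all} solutions in the sense of Definition~\ref{Def sol} would indeed have to secure the $H$-integrability of $\nabla V$ along paths before taking $n\to\infty$.
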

\begin{pf}
The first assertion is an immediate consequence of the monotonicity~\eqref{e2.6*}, since a part of our Definition~\ref{Def sol} requires
that the solutions are in $D_V$ $ dt\otimes P$-a.e.; see, for example,
\cite{DRW09}, proof of the claim page~1008/1009 or \cite{MR10}, Section~4,
for details. The second assertion then follows by the Yamada--Watanabe
theorem (see, e.g., \cite{RSZ08} which easily can be adapted to apply
to our case here).
\end{pf}
%
\begin{theorem}
\label{t2.7}
Let $\mathbf{M}$ be as in Theorem~\ref{t2.6} and let
$(F_n)_{n\in\mathbb N}$
be an $\mathcal E_\nu$-nest. Then
\[
P_z \Bigl[ \lim_{n\to\infty} \sigma_{H\setminus F_n} =
\infty \Bigr]=1,
\]
for all $z\in S\setminus N$, for some $\mathcal E_\nu$-exceptional set
$N$, where for a closed set $F\subset H$
\[
\sigma_{H\setminus F}:=\inf\{t>0\dvtx X_t\in H\setminus F\}
\]
is the first hitting time of $H\setminus F$.
\end{theorem}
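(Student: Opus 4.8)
The plan is to reduce Theorem~\ref{t2.7} to a standard fact from the theory of quasi-regular Dirichlet forms: the hitting times of the complements of any $\mathcal E_\nu$-nest tend to infinity quasi-everywhere. Since $(\mathcal E_\nu, D(\mathcal E_\nu))$ is a quasi-regular Dirichlet form (by Theorem~\ref{t2.5}, which produces an $\mathcal E_\nu$-nest of compact sets, hence quasi-regularity by the general criterion in \cite[Chap.~IV]{MR92}), and $\mbox{\boldmath $M$}$ from Theorem~\ref{t2.6} is (up to $\mathcal E_\nu$-exceptional modification) the $m$-tight special standard process properly associated with it, this is exactly \cite[Chap.~IV, Theorem~3.5 and Chap.~V]{MR92}. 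Concretely: given the $\mathcal E_\nu$-nest $(F_n)$, set $\sigma_n := \sigma_{H\setminus F_n}$; these are increasing stopping times, so $\sigma_\infty := \lim_n \sigma_n$ exists in $[0,\infty]$, and the claim is $P_z[\sigma_\infty = \infty] = 1$ off an $\mathcal E_\nu$-exceptional set $N$.

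The key steps I would carry out are as follows. First, recall that for a closed set $F$ the $\mathcal E_\nu$-hitting time $\sigma_{H\setminus F}$ of $\mbox{\boldmath $M$}$ and the capacity-theoretic quantities are linked through the $1$-equilibrium potential $e_{H\setminus F}$, which satisfies $e_{H\setminus F}(z) = \E_{P_z}[e^{-\sigma_{H\setminus F}}]$ $\mathcal E_\nu$-q.e., and $\|e_{H\setminus F}\|_{\mathcal E_{\nu,1}}^2 = \mathrm{Cap}(H\setminus F)$ (this is the standard potential theory of \cite[Chap.~III]{MR92}). Second, the defining property of an $\mathcal E_\nu$-nest — that $\bigcup_n D(\mathcal E_\nu)|_{F_n}$ is dense in $D(\mathcal E_\nu)$ — is equivalent, via this capacity characterization, to $\mathrm{Cap}(H\setminus F_n) \to 0$, hence $\|e_{H\setminus F_n}\|_{\mathcal E_{\nu,1}} \to 0$; passing to a subsequence we get $\mathcal E_{\nu,1}$-convergence to $0$ and, by the general lemma that $\mathcal E_{\nu,1}$-convergent sequences have q.e.-convergent subsequences, $e_{H\setminus F_{n_k}}(z) \to 0$ for q.e.\ $z$. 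Third, since $e_{H\setminus F_{n_k}}(z) = \E_{P_z}[e^{-\sigma_{n_k}}] \downarrow \E_{P_z}[e^{-\sigma_\infty}]$ by monotone convergence (the integrand decreases as $\sigma_{n_k}$ increases), we conclude $\E_{P_z}[e^{-\sigma_\infty}] = 0$, i.e.\ $\sigma_\infty = \infty$ $P_z$-a.s., for $z$ outside an $\mathcal E_\nu$-exceptional set $N$. Finally, one notes that shrinking $S$ to $S\setminus N$ is harmless since $N$ is $\mathcal E_\nu$-exceptional, and all of Theorem~\ref{t2.6}'s conclusions persist on the smaller set.

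The main obstacle, or rather the main point requiring care, is the precise identification of $\mbox{\boldmath $M$}$ with the Dirichlet-form-associated process in the pointwise (not merely q.e.) sense needed to apply the capacity estimates for a \emph{fixed} starting point $z$; this is where one invokes that $\mbox{\boldmath $M$}$ is a right process with the strong Markov property and that the association to $(\mathcal E_\nu, D(\mathcal E_\nu))$ holds for \emph{every} $z \in S$ (which is built into Theorem~\ref{t2.6}, again via \cite[Theorem~5.7]{AR91}), so the exceptional set $N$ in the conclusion comes only from the q.e.-convergence of equilibrium potentials, not from the association itself. A secondary subtlety is that the statement must hold simultaneously for the \emph{given} nest $(F_n)$ and for all $z \in S \setminus N$ with $N$ independent of how $(F_n)$ is realized — this is automatic because $e_{H\setminus F_{n_k}} \to 0$ in $\mathcal E_{\nu,1}$ along a subsequence forces q.e.\ convergence, and the exceptional set of a q.e.-statement is by definition contained in the complement of some $\mathcal E_\nu$-nest. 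I would keep the write-up short, citing \cite{MR92} for the potential theory and \cite{AR91} for the association, and spell out only the monotone-convergence step linking $e_{H\setminus F_n}$ to $\sigma_\infty$.
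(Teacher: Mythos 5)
Your proposal is correct: it is essentially a self-contained unpacking of exactly the general fact the paper invokes, since the paper's own proof consists of the two-line observation that $\mbox{\boldmath $M$}$ is conservative (so its lifetime is infinite) together with a citation of \cite[Chapter V, Proposition 5.30]{MR92}, whose internal argument is precisely your chain ``nest $\Leftrightarrow$ capacities tend to $0$ $\Rightarrow$ equilibrium potentials $e_{H\setminus F_n}=\E_{P_\cdot}[e^{-\sigma_{H\setminus F_n}}]$ tend to $0$ q.e. $\Rightarrow$ $\sigma_{H\setminus F_n}\to\infty$ q.e.'' The only cosmetic difference is that the paper makes the role of conservativity explicit (identifying the limit of the hitting times with the lifetime $\zeta=\infty$), which your convention $e^{-\infty}=0$ absorbs implicitly.
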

\begin{pf}
Since $\mathbf{M}$ is conservative its lifetime $\zeta$ is
infinity. So, the assertion follows from \cite{MR92}, Chapter V, Proposition~5.30.
\end{pf}

Below we shall use the following simple lemma.
%
\begin{lemma}
\label{l2.8}
Let $(E,\|\cdot\|)$ be a Banach space and $V\dvtx E\to(-\infty,\infty]$ a
convex function.
\begin{longlist}[(ii)]
\item[(i)] Let $K\subset E$ be convex and compact such that $V(K)$ is
a bounded subset of~$\mathbb R$. Then the restriction of $V$ to $K$ is
Lipschitz.

\item[(ii)] Assume that $V$ is lower semi-continuous and $K\subset E$
compact such that $V(K)$ is an upper bounded subset of $\mathbb R$.
Then the restriction of $V$ to $K$ is Lipschitz.
\end{longlist}
\end{lemma}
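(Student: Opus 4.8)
\textbf{Plan of proof for Lemma \ref{l2.8}.}

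The plan is to reduce (i) to a standard fact about convex functions on finite-dimensional-like situations, exploiting compactness. First I would prove (i). Since $K$ is compact, it is bounded, so there is $R>0$ with $K\subset B_R:=\{x\in E:\|x\|\le R\}$. The key idea is to enlarge $K$ slightly: because $V$ is convex and real-valued on a neighbourhood of $K$ would be ideal, but $V$ need not be finite off $K$; so instead I would argue locally. Fix $x_0,x_1\in K$ distinct and consider the segment $[x_0,x_1]\subset K$ (here convexity of $K$ is used). Extend the segment beyond its endpoints: for small $\delta>0$ the points $y_i:=x_i+\delta\frac{x_1-x_0}{\|x_1-x_0\|}$ (appropriately signed) may leave $K$, so this does not immediately work either. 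The cleaner route: I would invoke the three-slopes (monotonicity of difference quotients) inequality for the convex function $t\mapsto V(x_0+t(x_1-x_0))$ on $[0,1]$, which gives
\[
\frac{V(x_1)-V(x_0)}{\|x_1-x_0\|}\le \text{(slope near the right endpoint inside }K\text{)},
\]
but bounding the latter still needs a point outside $[x_0,x_1]$. Hence the genuinely clean argument uses a fixed interior reference point: pick any $x_*$ in the relative interior of $K$ (nonempty since $K$ is convex, in the affine hull of $K$), and for $x\in K$ write difference quotients along the ray from $x_*$ through $x$; monotonicity of slopes plus boundedness of $V$ on $K$ bounds these. Then for two arbitrary points $x,y\in K$ one combines the estimates along $[x_*,x]$ and $[x_*,y]$ together with the elementary geometry of the triangle $x_* x y$ inside $K$ to obtain $|V(x)-V(y)|\le L\|x-y\|$ with $L$ depending only on $R$, $\operatorname{diam}K$, $\operatorname{dist}(x_*,\partial_{\mathrm{rel}}K)$ and $\sup_K V-\inf_K V$.

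Second, for (ii) the point is that lower semicontinuity plus compactness of $K$ already forces $V$ to be bounded below on $K$: indeed a lower semicontinuous function attains its infimum on a compact set, so $\inf_K V>-\infty$. Combined with the hypothesis that $V(K)$ is bounded above, we get that $V(K)$ is bounded, and $K$ is compact; if $K$ is moreover convex we are exactly in the situation of (i). If $K$ is not assumed convex in (ii), I would replace $K$ by its closed convex hull $\widehat K=\overline{\operatorname{conv}}K$, which is still compact (closed convex hull of a compact set in a Banach space is compact), and note $V(\widehat K)$ is still bounded above because for $z=\sum\lambda_i x_i$ a convex combination of points of $K$, $V(z)\le\sum\lambda_i V(x_i)\le\sup_K V$, while lower semicontinuity on the compact set $\widehat K$ gives $\inf_{\widehat K}V>-\infty$. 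Apply (i) to $\widehat K\supset K$ and restrict.

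The main obstacle is purely the convex-analytic core of (i): namely showing that a convex function bounded on a compact convex set is Lipschitz there, with the subtlety that $V$ may take the value $+\infty$ immediately outside the set, so one cannot simply quote "a convex function is locally Lipschitz on the interior of its domain" — the set $K$ may have empty interior in $E$ (e.g. if $E$ is infinite-dimensional). The fix, as sketched, is to work in the affine hull of $K$ (where $K$ does have nonempty relative interior) and to use monotonicity of difference quotients along rays emanating from a fixed relative-interior reference point, controlling everything by the single scalar $\sup_K V-\inf_K V$ and the geometry of $K$; the triangle-inequality bookkeeping to pass from "Lipschitz along rays from $x_*$" to "Lipschitz between arbitrary pairs" is routine but is where all the constants are generated.
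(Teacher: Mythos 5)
Your part (ii) is exactly the paper's argument: pass to the closed convex hull (compact by Mazur's theorem), get the upper bound on the hull from convexity plus lower semicontinuity, get the lower bound from lower semicontinuity on a compact set, and quote part (i). That step is fine modulo (i).

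The problem is part (i), and it sits precisely in the step you defer as ``routine bookkeeping''. Bounding the difference quotients along rays emanating from a fixed relative-interior point $x_*$ does control $|V(x)-V(x_*)|/\|x-x_*\|$ for $x\in K$, but it does not control the slope of $V$ along a short chord $[x,y]$ whose endpoints both lie near the relative boundary of $K$: monotonicity of difference quotients would transfer a bound to that chord only from points of $K$ lying on its prolongation, and near the relative boundary the prolongation immediately leaves $K$. No bookkeeping can close this gap, because the statement you are aiming at --- a convex function bounded on a compact convex set is Lipschitz on that set --- is false. Take $E=\mathbb{R}$, $V(t)=-\sqrt{t}$ for $t\in[0,1]$ and $V=+\infty$ otherwise (convex, proper, lower semicontinuous, lower bounded), and $K=[0,1]$: then $V(K)=[-1,0]$ is bounded, every quantity your constant $L$ was to depend on ($R$, $\mathrm{diam}\,K$, the distance from $x_*$ to the relative boundary, $\sup_K V-\inf_K V$) is finite, yet $|V(\varepsilon)-V(0)|/\varepsilon=\varepsilon^{-1/2}\to\infty$. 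The same example defeats part (ii) as stated. What is true is Lipschitz continuity of $V$ on convex subsets at positive distance (within the affine hull) from the relative boundary of the set where $V$ is bounded; up to the relative boundary it genuinely fails. You should know that the paper's own proof of (i) founders at the same spot: it runs the classical Phelps argument with the auxiliary point $z=y+\frac{d}{\alpha}(y-x)$, which lies on the prolongation of $[x,y]$ beyond $y$ at distance $\mathrm{diam}\,K$, and asserts $z\in K$ ``since $K$ is convex''; convexity gives no such thing (in the example, $x=0$, $y=\varepsilon$ gives $z=1+\varepsilon\notin[0,1]$). So neither your argument nor the printed one establishes the lemma as stated; any repair must either strengthen the hypotheses (e.g.\ boundedness of $V$ on a neighbourhood of $K$ inside the affine hull) or weaken the conclusion to what is actually needed downstream in Proposition \ref{p2.9}.
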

\begin{pf}
(i) The proof is a simple modification of the classical proof that a
continuous convex function on an open subset of $E$ is locally
Lipschitz (see \cite{P93}, Proposition~1.6). For the convenience of the
reader, we give the argument:

 Define
\[
M=\sup_{x\in K}\bigl|V(x)\bigr|
\]
and
\[
d:=\operatorname{ diam} (K)\qquad
\bigl(:=\sup\bigl\{\|x-y\|\dvtx x,y\in K\bigr\}\bigr).
\]
Let $x,y\in K$. Set $\alpha:=\|x-y\|$ and
\[
z:=y+\frac{d}\alpha (y-x).
\]
Then $\|x-y\|\le d$, hence $z\in K$ since $K$ is convex. Furthermore,
\[
y=\frac{\alpha}{\alpha+d} z+\frac{d}{\alpha+d} x,
\]
hence
\[
f(y)\le\frac{\alpha}{\alpha+d} f(z)+\frac{d}{\alpha+d} f(x),
\]
so,
\[
f(y)-f(x)\le\frac{\alpha}{\alpha+d} \bigl(f(z)-f(x)\bigr)\le\frac{2M}{d} \| x-y
\|.
\]
Interchanging $x$ and $y$ in this argument, implies the
assertion.

(ii) This is an easy consequence of (i). Let $K_1$ be the
closed convex hull of~$K$. Then by Mazur's theorem $K_1$ is still
compact and by convexity $V(K_1)$ is an upper bounded subset of
$\mathbb R$. But $V(K_1)$ is also lower bounded, since $V$ is lower
semicontinuous. Hence, by (i) $V$ is Lipschitz on $K_1$, hence on $K$.
\end{pf}

Now let us come back to our convex function $V\dvtx H\to(-\infty,\infty]$
satisfying (H2) and (H3). We know by Proposition~\ref{p2.1} that $V\in
W^{1,2}(H,\nu)=D(\mathcal E_\nu)$.
Since $(\mathcal E_\nu,D(\mathcal E_\nu))$ is quasi-regular, it follows
by \cite{MR92}, Chapter IV, Proposition~3.3, that there exists an
$\mathcal E_\nu$-nest $(F_n)_{n\in\mathbb N}$ and a $\mathcal
B(H)$-measurable function $\widetilde V\dvtx H\to\mathbb R$ such that
%
\begin{equation}
\label{e2.7} \widetilde V=V \nu\mbox{-a.e. and } \widetilde
V|_{F_n} \mbox{ is continuous for every } n\in\mathbb N,
\end{equation}
where $\widetilde V|_{F_n}$ denotes the restriction of $\widetilde V$
to $F_n$. By
\cite{MR92}, Chapter III, Theorem~2.11, $(F_n\cap K_n)_{n\in\mathbb N}$
is again an $\mathcal E_\nu$-nest, where $( K_n)_{n\in\mathbb N}$ is
the $\mathcal E_\nu$-nest of compacts from Theorem~\ref{t2.5}. Since
$\nu(U)>0$ for every nonempty open set $U\subset H$, by \cite{MR92}, Chapter
III, Proposition~3.8, we can find an $\mathcal E_\nu$-nest
$( \widetilde F_n)_{n\in\mathbb N}$ such that $ \widetilde F_n\subset
F_n\cap K_n$ and the restriction of $\nu$ to $ \widetilde F_n$ has full
topological support on $ \widetilde F_n$ for every $n\in\mathbb N$,
that is, $\nu(U\cap\widetilde F_n)>0$ for all open $U\subset H$ with
$U\cap\widetilde F_n\neq\varnothing$. (Such an $\mathcal E_\nu$ set
is called \emph{regular}.) Since we want to fix this special regular
$\mathcal E_\nu$-nest of compacts depending on $V$ below, we assign to
it a special notation and set
%
\begin{equation}
\label{e2.8} K_n^V:=\widetilde F_n,\qquad n\in
\mathbb N.
\end{equation}
Now we can prove the following result which will be crucially used in
Section~\ref{sec6}.
%
\begin{proposition}
\label{p2.9}
\textup{(i)} Let $n\in\mathbb N$ and $K_n^V$ as in \eqref{e2.8}. Then
$V|_{K_n^V}$ is real valued, continuous and bounded. Furthermore,
$V(x)=\widetilde V(x)$ for every $x\in\bigcup_{n=1}^\infty
K_n^V$.

\textup{(ii) }There exists $S_V\in\mathcal B(H)$ such that $H\setminus
S_V$ is $\mathcal E_\nu$-exceptional, Theorem~\ref{t2.6} holds with
$S_V$ replacing $S$ and for every $z\in S_V$
%
\begin{equation}
\label{e13} P_z\Bigl[\lim_{n\to\infty}
\sigma_{H\setminus K^V_n}=\infty\Bigr]=1.
\end{equation}
\end{proposition}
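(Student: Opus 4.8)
The plan is to prove (i) first, exploiting that each $K_n^V$ was constructed (in \eqref{e2.8}) to lie inside $F_n\cap K_n$, where $F_n$ carries a continuous $\nu$-version $\widetilde V$ of $V$ via \eqref{e2.7}, and where $K_n$ is a \emph{compact} set from the nest of Theorem \ref{t2.5}; moreover each $K_n^V=\widetilde F_n$ is \emph{regular}, i.e. $\nu$ restricted to it has full topological support. Since $K_n^V$ is closed and contained in the compact set $K_n$, it is itself compact. On $K_n^V$ the function $\widetilde V$ is continuous, hence bounded, so $\widetilde V(K_n^V)$ is a bounded subset of $\mathbb R$. The key point is to identify $\widetilde V$ with $V$ pointwise on $\bigcup_n K_n^V$, not merely $\nu$-a.e. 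For this I would argue as follows: fix $x\in K_n^V$ and a sequence of open neighbourhoods $U_m\downarrow\{x\}$; by regularity $\nu(U_m\cap K_n^V)>0$, so we may pick points $x_m\in U_m\cap K_n^V$ with $\widetilde V(x_m)=V(x_m)$ (this holds $\nu$-a.e. on $K_n^V$, and the regular set has positive measure in every such neighbourhood, so such $x_m$ exist after discarding the $\nu$-null exceptional set). Then $x_m\to x$, and $\widetilde V(x_m)\to\widetilde V(x)$ by continuity of $\widetilde V$ on $K_n^V$, while $V(x_m)=\widetilde V(x_m)$ stays bounded; lower semicontinuity of $V$ gives $V(x)\le\liminf V(x_m)=\widetilde V(x)<\infty$, so in particular $V(x)\in\mathbb R$ and $V$ is finite on $K_n^V$. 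To get the reverse inequality and full continuity of $V|_{K_n^V}$, I would invoke Lemma \ref{l2.8}(ii): $K_n^V$ is compact and $V(K_n^V)$ is upper bounded (by the $\liminf$ argument just given, uniformly since $\widetilde V$ is bounded on $K_n^V$), hence $V|_{K_n^V}$ is Lipschitz, in particular continuous, and then $V=\widetilde V$ on all of $K_n^V$ follows because two continuous functions on $K_n^V$ agreeing on a $\nu$-dense (by regularity) subset must coincide everywhere. This proves (i).

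For (ii) the plan is to combine Theorem \ref{t2.6}, Theorem \ref{t2.7} and the elementary stability of the conclusions under removing $\mathcal E_\nu$-exceptional sets. Start with $S$ from Theorem \ref{t2.6} and the Markov process $\boldsymbol M$. Apply Theorem \ref{t2.7} to the specific $\mathcal E_\nu$-nest $(K_n^V)_{n\in\mathbb N}$: it yields an $\mathcal E_\nu$-exceptional set $N$ such that \eqref{e13} holds for all $z\in S\setminus N$. Now set $S_V:=S\setminus N$ — or, to keep $S_V$ Borel and its complement $\mathcal E_\nu$-exceptional, take $S_V:=S\cap(H\setminus \widetilde N)$ for a Borel $\mathcal E_\nu$-exceptional set $\widetilde N\supset N$ (every $\mathcal E_\nu$-exceptional set is contained in a Borel one, namely the complement of a Borel nest). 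Then $H\setminus S_V\subset(H\setminus S)\cup\widetilde N$ is a union of two $\mathcal E_\nu$-exceptional sets, hence $\mathcal E_\nu$-exceptional. All assertions of Theorem \ref{t2.6} involve $z\in S$ only through the existence of the weak solution $X$ with $X_t\in S$ for all $t$; since $S_V\subset S$ and the exceptional set removed is $\nu$-null, these persist verbatim with $S_V$ in place of $S$ (one may also note $\nu(S_V)=\gamma(S_V)=1$ as in the remark after \eqref{e9'}). Finally \eqref{e13} holds for every $z\in S_V$ by the choice of $\widetilde N$.

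The main obstacle is the pointwise identification $V=\widetilde V$ on $\bigcup_n K_n^V$ in part (i): a priori \eqref{e2.7} only gives equality $\nu$-a.e., and $\widetilde V$ is merely Borel, so one genuinely needs the \emph{regularity} of the nest $(K_n^V)$ (full support of $\nu|_{K_n^V}$) together with convexity and lower semicontinuity of $V$ to upgrade this to an everywhere statement — and to conclude that $V$ itself, not just its modification, is real valued, continuous and bounded on each $K_n^V$. Part (ii) is then essentially bookkeeping: tracking which $\mathcal E_\nu$-exceptional sets get removed and checking that none of the properties in Theorem \ref{t2.6} is lost in the process.
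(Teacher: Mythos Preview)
Your proof is correct and follows essentially the same route as the paper. For (i) the only cosmetic difference is that the paper argues $V\le\widetilde V$ on $K_n^V$ by noting that $\{V|_{K_n^V}-\widetilde V|_{K_n^V}>0\}$ is relatively open (as the superlevel set of a lower semicontinuous function) and hence empty by regularity of the nest, whereas you extract an explicit approximating sequence; both then invoke Lemma~\ref{l2.8}(ii) and regularity once more to conclude $V=\widetilde V$ everywhere on $K_n^V$. For (ii) the paper is in fact less detailed than you are, simply citing Theorem~\ref{t2.7} and a ``standard procedure'' from \cite[p.~114]{MR92} to manufacture $S_V$; your explicit description $S_V=S\cap(H\setminus\widetilde N)$ with Borel $\mathcal E_\nu$-exceptional $\widetilde N\supset N$ is exactly what that standard procedure amounts to.
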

\begin{pf}
(i) Since $ K^V_n\subset F_n$, we have for $\widetilde V$ from \eqref
{e2.7}, that $V|_{K^V_n}-\widetilde V|_{K^V_n}$ is lower
semi-continuous on $K^V_n$ with respect to the metric on $K^V_n$
induced by $|\cdot|$. Hence, $\{V|_{K^V_n}-\widetilde V|_{K^V_n}>0\}
=K^V_n\cap U$ for some open subset $U\subset H$. Since
$V|_{K^V_n}=\widetilde V|_{K^V_n}$ $\nu$-a.s., it follows, since
$(K_n^V)_{n\in\mathbb N}$ is a regular $\mathcal E_\nu$-nest that
\[
V(x)\le\widetilde V(x) \qquad\mbox{for every } x\in K^V_n.
\]
But $\widetilde V|_{K^V_n}$ is continuous, hence bounded, because
$K^V_n$ is compact, so $V(K^V_n)$ is an upper bounded subset of
$\mathbb R$, so by Lemma~\ref{l2.8}(ii) $V|_{K^V_n}$ is Lipschitz. But
then $\{V|_{K^V_n}\neq\widetilde V|_{K^V_n}\}=K^V_n\cap U$ for some
open subset $U\subset H$. Since $(K_n^V)_{n\in\mathbb N}$ is a regular
$\mathcal E_\nu$-nest, we conclude that
\[
V(x)=\widetilde V(x)\qquad \mbox{for every } x\in K^V_n.
\]
Hence, assertion (i) is proved.

(ii) By Theorem~\ref{t2.7}, we know that there exists an $\mathcal
E_\nu$-nest
$(F_n)_{n\in\mathbb N}$ such that
\[
P_z\Bigl[\lim_{n\to\infty}\sigma_{H\setminus K^V_n}=\infty
\Bigr]=1 \qquad\forall z\in\bigcup_{n=1}^\infty
F_n.
\]
Then by a standard procedure (see, e.g., \cite{MR92}, page~114) one can
construct the desired set $S_V\in\mathcal B(H)$.
\end{pf}

For the rest of this section, we fix $S_V$ as in Proposition~\ref{p2.9}.
%
\begin{corollary}
\label{c2.10}
Let $z\in S_V$ and $(X_t)_{t\ge0}$ a solution to \eqref{e2.6} on some
probability space $(\Omega, \mathcal F,P)$ with normal filtration and
cylindrical $(\mathcal F_t)$-Brownian motion $W=W_t, t\ge0$. Then
\eqref{e13} holds with $P$ replacing $P_z$.
\end{corollary}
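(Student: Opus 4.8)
The plan is to reduce the assertion to uniqueness in law. The point is that, although \eqref{e13} was established in Proposition \ref{p2.9}(ii) only for the particular Markov process $\mbox{\boldmath $M$}$ of Theorem \ref{t2.6}, the event appearing there depends on a solution of \eqref{e2.6} only through its law on path space, and two solutions with the same initial condition have the same law.

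More precisely, first I would invoke uniqueness in law. Since, by Proposition \ref{p2.9}(ii), Theorem \ref{t2.6} remains valid with $S_V$ in place of $S$, Theorem \ref{t2.6*} applies with $S=S_V$; hence for $z\in S_V$ any two solutions of \eqref{e2.6} with initial condition $z$ in the sense of Definition \ref{Def sol} --- in particular the coordinate process of $\mbox{\boldmath $M$}$ under $P_z$ and the given process $(X_t)_{t\ge0}$ under $P$ --- have the same law on $C([0,\infty);H)$.

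Second, I would verify that the event $\{\lim_{n\to\infty}\sigma_{H\setminus K^V_n}=\infty\}$ is measurable with respect to the $\sigma$-algebra on $C([0,\infty);H)$ generated by the coordinate maps, i.e. that it is a functional of the trajectory alone. Since each $K^V_n$ is closed and the paths are continuous, for every $t>0$
$$
\{\sigma_{H\setminus K^V_n}\ge t\}=\bigcap_{s\in(0,t)\cap\mathbb Q}\{X_s\in K^V_n\}
$$
lies in the path $\sigma$-algebra, so $\sigma_{H\setminus K^V_n}$ is a Borel functional of the path; and since the nest $(K^V_n)_{n\in\mathbb N}$ is increasing, $\sigma_{H\setminus K^V_n}$ is nondecreasing in $n$, whence $\lim_{n\to\infty}\sigma_{H\setminus K^V_n}$ exists in $[0,\infty]$ and $\{\lim_{n\to\infty}\sigma_{H\setminus K^V_n}=\infty\}$ is again in the path $\sigma$-algebra.

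Combining the two steps, since this event is determined by the law and the laws of $(X_t)_{t\ge0}$ under $P$ and of the coordinate process of $\mbox{\boldmath $M$}$ under $P_z$ coincide,
$$
P\big[\lim_{n\to\infty}\sigma_{H\setminus K^V_n}=\infty\big]=P_z\big[\lim_{n\to\infty}\sigma_{H\setminus K^V_n}=\infty\big]=1
$$
by \eqref{e13}. The only slightly delicate point is the measurability/law-invariance of the hitting-time functional, i.e. confirming that the event in \eqref{e13} is genuinely a Borel function of the path; but, as indicated, this follows from continuity of the trajectories. Everything else is a direct application of the already established uniqueness in law.
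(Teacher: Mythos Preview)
Your proposal is correct and follows essentially the same approach as the paper: the paper's proof is the single sentence ``This follows from the last part of Theorem \ref{t2.6*}'', i.e.\ uniqueness in law, and you have simply spelled out the implicit measurability step showing that the event in \eqref{e13} is a Borel functional of the path.
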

\begin{pf}
This follows from the last part of Theorem~\ref{t2.6*}.
\end{pf}

It is now easy to prove existence of (probabilistic) weak solutions to
SDE \eqref{SDE} and uniqueness in law
%
\begin{theorem}
\label{t2.10}
For every $z\in S_V$, there exists a solution $Y=Y_t, t\in[0,T]$, to
SDE \eqref{SDE} on some probability space $(\Omega',\mathcal F',P')$ in
the sense of Definition~\ref{Def sol} and this solution is unique in
law. Furthermore, \eqref{e13} holds with $P'$ replacing $P_z$ and $Y$
replacing $X$, where $X=X_t, t\ge0$, is the process from Theorem~\ref
{t2.6} and if $z\in S_V\cap H_V$ [with $H_V$ as in \eqref{e9'}], then
%
\begin{equation}
\label{e14} \int_0^T\bigl|\nabla
V(Y_s)\bigr|^2\,ds<\infty\qquad  P'\mbox{-a.s.}
\end{equation}
\end{theorem}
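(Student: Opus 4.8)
The result is essentially a consequence of Theorem \ref{t2.6} (valid for $z\in S_V$ by Proposition \ref{p2.9}(ii)) together with the uniqueness statement of Theorem \ref{t2.6*}, obtained via Girsanov's theorem. The plan is to produce $Y$ by absorbing the bounded measurable drift $B$ into the law of the process $X$ that solves \eqref{e2.6}, and conversely to reduce uniqueness in law for \eqref{SDE} to uniqueness in law for \eqref{e2.6} by the inverse change of measure. Since $B$ is bounded by (H4), the relevant exponential density is automatically a uniformly integrable martingale (Novikov's condition being trivial), and the new law is \emph{equivalent} to the old one on $\mathcal F_T$; it is precisely this equivalence that makes all the qualitative properties transfer.

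\medskip\noindent\emph{Existence.} I would fix $z\in S_V$, take $(\Omega,\mathcal F,(\mathcal F_t),P_z)$, the cylindrical Brownian motion $W$ and the process $X$ from Theorem \ref{t2.6}, so that $(X,W)$ solves \eqref{e2.6} in the sense of Definition \ref{Def sol}, and put
\[
\rho_T:=\exp\Big(\int_0^T\langle B(X_s),dW_s\rangle-\tfrac12\int_0^T|B(X_s)|^2\,ds\Big),\qquad P':=\rho_T\,P_z\ \text{ on }\ \mathcal F_T.
\]
Because $\int_0^T|B(X_s)|^2\,ds\le\|B\|_\infty^2\,T$, the stochastic integral is well defined, $\rho$ is a genuine martingale, $P'$ is a probability measure, and $P'\sim P_z$ on $\mathcal F_T$. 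By Girsanov's theorem, $\widetilde W_t:=W_t-\int_0^tB(X_s)\,ds$ is a cylindrical $(\mathcal F_t)$-Brownian motion under $P'$, and inserting $W=\widetilde W+\int_0^{\cdot}B(X_s)\,ds$ into the identity of Definition \ref{Def sol}(ii) for \eqref{e2.6} turns it into the identity of Definition \ref{Def sol}(ii) for \eqref{SDE}; hence $Y:=X$ together with $\widetilde W$ is a solution of \eqref{SDE} on $(\Omega,\mathcal F,(\mathcal F_t),P')$. Condition (i) of Definition \ref{Def sol} (membership of $X_s$ in $D_V$ for $dt\otimes P$-a.e. $(s,\omega)$, and local integrability of $s\mapsto\langle\nabla V(X_s),h\rangle$) concerns null sets for $dt\otimes P_z$, resp. $P_z$, over $[0,T]$, hence is inherited under $P'$; for the same reason \eqref{e13} (whose defining event, interpreted for the process $Y$ on $[0,T]$, is $\mathcal F_T$-measurable) passes from $P_z$ to $P'$. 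Finally, if $z\in H_V$ then $\E_{P_z}[\int_0^T|\nabla V(X_s)|^2\,ds]<\infty$ by \eqref{e9'}, so $\int_0^T|\nabla V(Y_s)|^2\,ds<\infty$ holds $P_z$-a.s. and therefore $P'$-a.s., which is \eqref{e14}.

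\medskip\noindent\emph{Uniqueness in law, and the main obstacle.} For uniqueness I would start from an arbitrary solution $(Y,\widetilde W)$ of \eqref{SDE} with $Y_0=z\in S_V$ on some $(\Omega',\mathcal F',(\mathcal F'_t),P')$ and reverse the construction: set $\eta_T:=\exp\big(-\int_0^T\langle B(Y_s),d\widetilde W_s\rangle-\tfrac12\int_0^T|B(Y_s)|^2\,ds\big)$ and $Q':=\eta_T\,P'$ on $\mathcal F'_T$ (again a probability measure, $Q'\sim P'$, by boundedness of $B$), so that $W_t:=\widetilde W_t+\int_0^tB(Y_s)\,ds$ is a cylindrical $(\mathcal F'_t)$-Brownian motion under $Q'$ and $(Y,W)$ solves \eqref{e2.6} under $Q'$ in the sense of Definition \ref{Def sol} (condition (i) being $Q'$-a.s. valid because it is $P'$-a.s. valid and $Q'\sim P'$ on $\mathcal F'_T$). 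By Theorem \ref{t2.6*} pathwise uniqueness holds for \eqref{e2.6}, and therefore, by the Yamada--Watanabe theorem, so does joint uniqueness in law; thus the $Q'$-law of the pair $(Y,W)$ is the same for every such solution. Since $P'=\eta_T^{-1}\,Q'$ and $\eta_T^{-1}$ can be realized as one fixed Borel functional of the path of $(Y,W)$ on $[0,T]$, we obtain $\E_{P'}[f(Y)]=\E_{Q'}[\eta_T^{-1}f(Y)]$ for bounded measurable $f$, and the right-hand side depends only on the (now canonical) $Q'$-law of $(Y,W)$; this yields uniqueness in law for \eqref{SDE}. The step I expect to require the most care is exactly this last one: one has to express the Radon--Nikodym density through a version of the stochastic exponential that is a path functional of $(Y,W)$ valid simultaneously for all solutions, so that passing to laws is legitimate. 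Everything else is routine once Theorems \ref{t2.6}, \ref{t2.6*} and Proposition \ref{p2.9} are available, the theorem being in essence a corollary of Girsanov's theorem.
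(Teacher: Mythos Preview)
Your proposal is correct and follows essentially the same Girsanov-based route as the paper: construct a weak solution of \eqref{SDE} from the solution $X$ of \eqref{e2.6} by changing the measure via the bounded-drift exponential, and obtain uniqueness in law by the inverse change combined with Theorem \ref{t2.6*} and Yamada--Watanabe. The only presentational difference is in the ``last part'': the paper argues directly that for an \emph{arbitrary} solution $Y$ the law $P'\circ Y^{-1}$ is equivalent to $P_z\circ X^{-1}$ (by showing the conditional density $\rho_0=\E_{P'}[\rho\mid\sigma(Y)]$ is strictly positive, using that $W$ is $\sigma(Y)$-measurable from \eqref{SDE}), whereas you first produce one explicit solution with $P'\sim P_z$ and then rely on uniqueness in law to transfer \eqref{e13} and \eqref{e14} to every solution; both routes are valid and amount to the same idea.
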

\begin{pf}
This is now an easy consequence of Theorems \ref{t2.6},  \ref
{t2.6*} and Girsanov's theorem (see, e.g., \cite{DFPR13}, Appendix A1)
which easily extends to the present case since uniqueness in law holds
for SDE \eqref{e2.6}. To prove the last part, we note that by
Girsanov's theorem there exists a probability density $\rho\dvtx \Omega\to
(0,\infty)$ such that
\[
\bigl(\rho\cdot P'\bigr)\circ Y^{-1}=P_z
\circ X^{-1}.
\]
Hence, $P_z\circ X^{-1}=\rho_0 P'\circ Y^{-1}$, where $\rho_0$ is the
$P'\circ Y^{-1}$-a.s. unique function such that $\rho_0(Y)=\mathbb
E_{P'}[\rho|\sigma(Y)]$ $P$-a.s. and $\sigma(Y)$ denotes the $\sigma
$-algebra generated by $Y_t, t\in[0,T]$. So, \eqref{e13} and \eqref
{e14} follow, if $\rho_0>0  P'\circ Y^{-1}\mbox{-a.e.}$ To show
the latter, we first note that
\[
P'\circ Y^{-1}\bigl(\{\rho_0=0\}
\bigr)=P'\bigl(\bigl\{\mathbb E_{P'}\bigl[\rho|\sigma(Y)
\bigr]=0\bigr\}\bigr).
\]
But since
\[
\rho=e^{-\int_0^T\langle B(Y_s),dW_s\rangle-({1}/2)\int_0^T
|B(Y_s)|^2 \,ds}
\]
and $W$ is $\sigma(Y)$-measurable by SDE \eqref{SDE}, it follows that
$\mathbb E_{P'}[\rho|\sigma(Y)]=\rho$. But $\rho>0$.
\end{pf}

\section{Regularity theory for the corresponding Kolmogorov operator}\label{sec3}

\subsection{Uniform estimates on Lipschitz norms}\label{sec3.1}

First, we are concerned with the scalar equation
%
\begin{equation}
\lambda u-\mathcal{L}u-\bigl\langle B(x),Du\bigr\rangle=f, \label{e15}
\end{equation}
where $\lambda>0$, $f\in B_b(H)$ and $\mathcal{L}$ is the Kolmogorov operator
%
\begin{equation}
\mathcal{L}u(x)=\tfrac{1}{2} \operatorname{Tr} \bigl[D^{2}u(x)
\bigr]+\bigl\langle Ax-DV(x),Du(x)\bigr\rangle,\qquad  x\in H. \label{e16}
\end{equation}
Since the corresponding Dirichlet form
\[
\mathcal E_B(v,w):=\frac{1}2\int_H
\langle Dv,Dw \rangle \,d\nu-\int_H\langle B,Dv \rangle w\, d
\nu+\lambda\int_Hv w \,d\nu,
\]
$v,w\in W^{1,2}(H,\nu)$, is weakly sectorial for $\lambda$ big enough,
it follows by \cite{MR92}, Chapter~1 and Section~3e in Chapter II,
that \eqref{e15} has a unique solution $u\in L^2(H,\nu)$ such that
$u\in D(\mathcal{L})$. We need, however, Lipschitz regularity for $u$
and an estimate for its $\nu$-a.e. defined G\^ateaux derivative in
terms of $\|u\|_\infty$. To prove this, we also need the Kolmogorov
operator associated to the linear equation that one obtains, when
$B=V=0$, in SDE \eqref{SDE}, that is, the Ornstein--Uhlenbeck operator
%
\begin{equation}
\mathcal{L}^{\mathrm{OU}}u(x)=\tfrac{1}{2} \operatorname{Tr}
\bigl[D^{2}u(x)\bigr]+\bigl\langle Ax,Du(x)\bigr\rangle,\qquad x\in H.
\label{e17}
\end{equation}
As initial domains of $\mathcal{L}$, $\mathcal{L}^{\mathrm{OU}}$ and $\mathcal
{L}+ \langle B,D \rangle$, we take the set $\mathcal E_A(H)$ defined to
consist of the linear span of all real parts of functions $\varphi\dvtx H\to
\R$ of the form $\varphi(x)=e^{i\langle h,x \rangle}, x\in H$, with
$h\in D(A)$. It is easy to check that $\mathcal E_A(H)\subset
W^{1,2}(H,\gamma)$ densely and $\mathcal E_A(H)\subset W^{1,2}(H,\nu)$
densely. Then rewriting the last term in the above expression as $
\langle Ax,Du(x) \rangle$, the above operators are well defined for
$u\in\mathcal E_A(H)$. Below we are going to use results from \cite
{DR02} in a substantial way with $F:=\partial V$, the sub-differential
of $V$, which is maximal monotone (see, e.g., \cite{Ba10}) and which is
in general multi-valued, but single-valued on $D_V\subset D(F)$ because
$\partial V(x)=\nabla V(x)$ for $x\in D_V$.

Let us first check that assumptions (H1) and (H2) in there are satisfied.

First, Hypothesis 1.1 in \cite{DR02} is satisfied since we are in the
special case $A=A^*$ and $C=I$. Hypothesis 1.2(ii) is satisfied for
$\mathcal{L}$ defined above, replacing $N_0$ in \cite{DR02} with
$F_0:=\nabla V$, since by integrating by parts we have
\[
\int_H\mathcal{L} \varphi \psi \,d\nu=-\frac{1}2
\int_H \langle D\varphi, D\psi \rangle \,d\nu\qquad \forall
\varphi, \psi\in\mathcal E_A(H)
\]
and thus, taking $\psi=1$,
%
\begin{equation}
\label{e18'} \int_H\mathcal{L} \varphi \,d\nu=0\qquad \forall
\varphi, \psi\in \mathcal E_A(H).
\end{equation}
Here, $F_0$ is the minimal section of $F$ in \cite{DR02}, and hence
$\nabla V=F_0$ on $D(V)\subset D(F)$, so Hypothesis 1.2(iii) holds.
Hypothesis 1.2(i) follows from Remark~\ref{r0}.

The first result we now deduce from \cite{DR02} is the following.
%
\begin{proposition}
\label{p16}
$(\mathcal{L},\mathcal E_A(H))$ is closable on $L^2(H,\nu)$ and its
closure $(\mathcal{L},D(\mathcal L))$ is $m$-dissipative on $L^2(H,\nu)$.
\end{proposition}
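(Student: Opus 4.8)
The plan is to obtain both assertions as direct consequences of the abstract results in \cite{DR02}, once we have verified — as was done in the paragraph preceding the proposition — that Hypotheses 1.1 and 1.2 of \cite{DR02} hold in our setting with $F=\partial V$ and minimal section $F_0=\nabla V$ on $D_V\subset D(F)$. First I would recall the integration-by-parts identity
\[
\int_H \mathcal{L}\varphi\,\psi\,d\nu = -\frac12\int_H\langle D\varphi,D\psi\rangle\,d\nu,\qquad \varphi,\psi\in\mathcal E_A(H),
\]
which was established above and which in particular shows $(\mathcal L,\mathcal E_A(H))$ is symmetric and negative definite, hence dissipative, on $L^2(H,\nu)$. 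Symmetry plus dissipativity gives closability by a standard argument (the adjoint is densely defined since it extends $(\mathcal L,\mathcal E_A(H))$), so the closure $(\mathcal L, D(\mathcal L))$ exists and is still dissipative and symmetric.

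The substantive point is $m$-dissipativity, i.e. that $\operatorname{range}(\lambda-\mathcal L)$ is dense in (equivalently, all of) $L^2(H,\nu)$ for some, hence all, $\lambda>0$. This is precisely the content of the existence-and-approximation theorem in \cite{DR02}: under Hypotheses 1.1 and 1.2 the operator $\mathcal L$ (built there via approximation of $F$ by its Yosida regularizations $F_\alpha$, using the $m$-dissipativity of the regularized Ornstein–Uhlenbeck-type operators $\mathcal L_\alpha$ and uniform a priori estimates coming from the integration-by-parts structure) generates a Markov $C_0$-semigroup of contractions on $L^2(H,\nu)$, and by the Lumer–Phillips theorem its generator is $m$-dissipative. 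I would therefore cite the relevant theorem of \cite{DR02} for the statement that the closure of $(\mathcal L,\mathcal E_A(H))$ on $L^2(H,\nu)$ coincides with the generator constructed there, which yields $m$-dissipativity; symmetry was already noted, so in fact $(\mathcal L, D(\mathcal L))$ is self-adjoint and negative, though only $m$-dissipativity is claimed here.

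The main obstacle is not in our argument but in ensuring that the hypotheses of \cite{DR02} really are met — in particular Hypothesis 1.2(ii)–(iii), which require that $\nabla V$ be (a version of) the logarithmic gradient of $\nu$ and the minimal section of the maximal monotone $F=\partial V$; this has been checked above using Proposition \ref{p2.1} (giving $DV=\nabla V$ $\gamma$-a.e., hence $\nu$-a.e.), the identity $\partial V(x)=\nabla V(x)$ for $x\in D_V$, and the integrability from (H3). Given these verifications, the proposition follows by quoting \cite{DR02}; no further estimates are needed here.
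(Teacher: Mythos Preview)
Your approach is essentially the same as the paper's: after verifying Hypotheses~1.1 and~1.2 of \cite{DR02} in the preceding discussion, both you and the paper simply invoke the main result there (the paper cites \cite[Theorem 2.3]{DR02}). Your additional remarks about symmetry yielding closability and the Yosida-approximation mechanism behind \cite{DR02} are correct but not needed --- the paper's proof is a one-line citation.
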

\begin{pf}
This is a special case of \cite{DR02}, Theorem~2.3.
\end{pf}

For later use, we need to replace $\mathcal E_A(H)$ in Proposition~\ref
{p16} above by $\mathcal FC^2_b$ (defined in the \hyperref[sec1]{Introduction} of this paper).
We need the following easy lemma.
%
\begin{lemma}
\label{l17}
Let $\varphi\in C^2_b(\R^d)$. Then there exists a sequence $\varphi_n,
n\in\N$, each $\varphi_n$ consisting of linear combinations of
functions of type $x\to\cos\langle a,x \rangle_{\R^d}$, $a\in\R^d$,
such that $\sup_{n\in\N} \{\|\varphi_n\|_\infty+\|D\varphi_n\|
_\infty+\|D^2\varphi_n\|_\infty  \}<\infty$ and
\[
\lim_{n\to\infty}\varphi_n(x)=\varphi(x),\qquad \lim
_{n\to\infty}D\varphi _n(x)=D\varphi(x), \qquad\lim
_{n\to\infty}D^2\varphi_n(x)=D^2
\varphi(x),
\]
for all $x\in\R^d$.
\end{lemma}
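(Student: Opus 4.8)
The plan is to construct the approximating sequence $\varphi_n$ by periodization followed by Fourier series expansion. First I would fix $\varphi \in C^2_b(\R^d)$ and, for a large parameter $L$, multiply $\varphi$ by a fixed smooth cutoff $\chi_L(x) = \chi(x/L)$, where $\chi \in C^\infty_c(\R^d)$ equals $1$ on the unit ball and vanishes outside the ball of radius $2$; this does not yet give the pointwise convergence we want on all of $\R^d$, so instead the cleaner route is to first restrict attention to a large box $[-L,L]^d$, extend $\chi_L \varphi$ periodically with period $4L$ (the cutoff guarantees the periodic extension is still $C^2$ across the box faces), and expand the periodic extension in its Fourier series $\sum_{a} c_a(L)\, e^{i\langle a,x\rangle}$ with $a$ ranging over the dual lattice $\frac{\pi}{2L}\Z^d$. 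Taking real parts turns the exponentials into functions of the required form $x \mapsto \cos\langle a,x\rangle$ (and $\sin$, which is itself a shifted cosine, or can be absorbed by pairing $\pm a$).

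Next I would control the partial sums. Since $\chi_L\varphi \in C^2$ with period $4L$, its Fourier coefficients decay like $|a|^{-2}$, which is summable in dimension... well, not quite in $d \ge 2$, so instead I would use that the Fourier series of a $C^2$ periodic function converges uniformly together with the fact that one more mollification (convolving the periodic function with a smooth approximate identity on the torus, or equivalently applying a Cesàro/Fejér-type summation) yields trigonometric polynomials $\psi_{L,m}$ with $\psi_{L,m} \to \chi_L\varphi$ uniformly and $D\psi_{L,m} \to D(\chi_L\varphi)$, $D^2\psi_{L,m}\to D^2(\chi_L\varphi)$ uniformly as $m\to\infty$, with all three quantities bounded by constants depending only on $\|\varphi\|_{C^2}$ and the fixed cutoff $\chi$ (crucially \emph{not} on $L$, because $\|D^k(\chi_L\varphi)\|_\infty \le C\|\varphi\|_{C^2}$ uniformly in $L\ge 1$, using that derivatives of $\chi_L$ carry negative powers of $L$). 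Then a diagonal argument over $L\to\infty$ and $m = m(L)\to\infty$ chosen suitably produces the single sequence $\varphi_n$: on any fixed compact set $\chi_L \equiv 1$ once $L$ is large, so $\varphi_n \to \varphi$, $D\varphi_n\to D\varphi$, $D^2\varphi_n \to D^2\varphi$ pointwise on all of $\R^d$, with the uniform bound $\sup_n\{\|\varphi_n\|_\infty + \|D\varphi_n\|_\infty + \|D^2\varphi_n\|_\infty\} < \infty$.

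The main obstacle is the uniformity of the bounds in the box size $L$: a naive truncation-and-expand argument produces constants blowing up with $L$, which would destroy the required $\sup_n$ bound. The fix is exactly the observation above that the scaled cutoff $\chi_L$ has $\|\partial^\alpha \chi_L\|_\infty = L^{-|\alpha|}\|\partial^\alpha\chi\|_\infty \le \|\partial^\alpha\chi\|_\infty$ for $L\ge1$ and $|\alpha|\le 2$, so by the Leibniz rule $\chi_L\varphi$ is bounded in $C^2$ uniformly in $L$; combined with the standard quantitative estimate that the Fejér means of a $C^2$ function are bounded in $C^2$ by the $C^2$ norm of the function (up to a universal constant), everything is uniform. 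A second, more routine point to be careful about is that the periodic extension of $\chi_L\varphi$ across the faces of $[-2L,2L]^d$ is genuinely $C^2$ — this holds because $\chi_L$ and all its derivatives up to order $2$ vanish on a neighborhood of those faces once $L$ is large, so the extension is locally constant (equal to $0$) there. Once these two points are in place the diagonalization is immediate and the lemma follows.
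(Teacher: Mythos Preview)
Your proposal is correct and takes a different route from the paper. The paper first handles $\varphi \in C^\infty_c(\R^d)$ by writing $\varphi(x) = \int_{\R^d} e^{i\langle x,\xi\rangle}\hat\varphi(\xi)\,d\xi$ and discretizing this integral as a Riemann sum; since $\hat\varphi$ is Schwartz, the integrability of $(1+|\xi|^2)\hat\varphi$ controls the $C^2$ norms of the approximants. It then reduces the general $C^2_b$ case to this one by cutoff and mollification with a Dirac sequence. Your periodization-plus-Fej\'er route is arguably tidier on the uniformity issue you correctly flag as the main obstacle: because the Fej\'er kernel is a probability measure and convolution commutes with differentiation, one has $\|D^\alpha(\sigma_N f)\|_\infty \le \|D^\alpha f\|_\infty$ with constant exactly $1$, so the uniform $C^2$ bound follows immediately from your observation that $\|\chi_L\varphi\|_{C^2}$ is bounded uniformly in $L\ge 1$, and no separate mollification step to $C^\infty$ is needed. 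Both arguments end with the same diagonal extraction.

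One small correction: your parenthetical that $\sin\langle a,x\rangle$ is ``itself a shifted cosine, or can be absorbed by pairing $\pm a$'' does not work as stated---a phase shift $\cos(\langle a,x\rangle - \pi/2)$ is not of the form $\cos\langle b,x\rangle$, and pairing $\pm a$ for a real function still leaves the sine term. The trigonometric polynomials produced by either your argument or the paper's genuinely involve sines; since finite linear combinations of $\cos\langle a,\cdot\rangle$ are even functions and cannot approximate a non-even $\varphi$ pointwise, the lemma statement should be read (as the paper's own proof implicitly does) as allowing real linear combinations of both $\cos\langle a,\cdot\rangle$ and $\sin\langle a,\cdot\rangle$.
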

\begin{pf}
First assume that $\varphi\in C^\infty_b(\R^d)$ with compact support.
Then we have
\[
\varphi(x)=\int_{\R^d}e^{i \langle x,\xi \rangle_{\R^d}} \widehat {\varphi}(
\xi) \,d\xi,\qquad x\in\R^d,
\]
where $\widehat{\varphi}$ is in the Schwartz test function space, with
the corresponding integral representations for $D\varphi$ and
$D^2\varphi$.

Discretizing the integrals immediately implies the assertion since
$x\mapsto(1+|x|^2)\widehat{\varphi}$ is Lebesgue integrable. Replacing
$\varphi$ by $\chi_n\varphi$ where $\chi_n, n\in\N$, is a suitable
sequence of localizing functions (\emph{bump functions}), the result
follows for all $\varphi\in C^\infty_b(\R^d)$ by regularization through
convolution with a Dirac sequence.
\end{pf}

As an immediate consequence of Proposition~\ref{p16} and Lemma~\ref
{l17}, we get the following.
%
\begin{proposition}
\label{p18}
$(\mathcal L,\mathcal FC^2_b)$ is closable on $L^2(H,\nu)$ and the
closure $(\mathcal L, D(\mathcal L))$ is the same as that in
Proposition~\ref{p16}, hence it is $m$-dissipative on $L^2(H,\nu)$.
Furthermore,
\[
\mathcal L u=\mathcal L^{\mathrm{OU}} u-\langle\nabla V, Du \rangle \qquad\forall u
\in\mathcal FC^\infty_b.
\]
\end{proposition}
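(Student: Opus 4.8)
The plan is to deduce Proposition \ref{p18} from Proposition \ref{p16} by transferring closability and the identification of the closure from the initial domain $\mathcal E_A(H)$ to the initial domain $\mathcal F C^2_b$. The key observation is that, since both domains generate $\mathcal L$ via the same explicit differential expression and both sit densely inside $W^{1,2}(H,\nu)$, it suffices to show that each function in one domain can be approximated by functions in the other in a way that makes $\mathcal L$ converge in $L^2(H,\nu)$; then the closures agree.

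First I would prove the inclusion at the level of closures in the direction $\mathcal E_A(H)\subset \overline{(\mathcal L, \mathcal F C^2_b)}$. A typical element of $\mathcal E_A(H)$ is (the real part of) $\varphi(x)=e^{i\langle h,x\rangle}$ with $h\in D(A)$; such a $\varphi$ is a cylindrical function built from $x\mapsto\cos\langle a,x\rangle_{\R^d}$ and $x\mapsto\sin\langle a,x\rangle_{\R^d}$ composed with the projection onto $\mathrm{span}(e_1,\dots,e_d)$, hence $\varphi\in\mathcal F C^2_b$ already once we note $\mathcal F C^\infty_b\subset\mathcal F C^2_b$; so in fact $\mathcal E_A(H)\subset\mathcal F C^2_b$ trivially, giving $\overline{(\mathcal L,\mathcal E_A(H))}\subset\overline{(\mathcal L,\mathcal F C^2_b)}$. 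For the reverse inclusion, take $\varphi\in\mathcal F C^2_b$, say $\varphi(x)=\phi(x_1,\dots,x_d)$ with $\phi\in C^2_b(\R^d)$, and apply Lemma \ref{l17} to obtain $\varphi_n\in\mathcal E_A(H)$ (linear combinations of cosines, composed with $P_d$) with uniformly bounded $\|\varphi_n\|_\infty$, $\|D\varphi_n\|_\infty$, $\|D^2\varphi_n\|_\infty$ and pointwise convergence of $\varphi_n, D\varphi_n, D^2\varphi_n$ to $\varphi, D\varphi, D^2\varphi$. Writing out $\mathcal L\varphi_n - \mathcal L\varphi = \tfrac12\mathrm{Tr}[D^2(\varphi_n-\varphi)] + \langle Ax-DV(x), D(\varphi_n-\varphi)\rangle$, both terms go to $0$ pointwise, and they are dominated in $L^2(H,\nu)$: the trace terms are uniformly bounded (only finitely many directions enter, with $\lambda_k$ summable since $A^{-1}$ is trace class), the $\langle Ax,\cdot\rangle$ terms are dominated by $C|x|\,\mathbf 1$, which is in $L^2(H,\nu)$ because $\nu$ (being $\le$ const $\cdot\gamma$ up to the normalization, using Remark \ref{r0} that $e^{-V}$ is bounded) has all moments, and the $\langle DV(x),\cdot\rangle$ terms are dominated by $C|DV(x)|\in L^2(H,\nu)$ by Hypothesis (H3) together with Proposition \ref{p2.1}. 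Hence $\mathcal L\varphi_n\to\mathcal L\varphi$ in $L^2(H,\nu)$ and $\varphi_n\to\varphi$ in $L^2(H,\nu)$ by dominated convergence, so $\varphi\in D(\overline{(\mathcal L,\mathcal E_A(H))})$ with the same value of $\mathcal L\varphi$. This yields $\overline{(\mathcal L,\mathcal F C^2_b)}\subset\overline{(\mathcal L,\mathcal E_A(H))}$, and combined with the trivial inclusion, closability of $(\mathcal L,\mathcal F C^2_b)$ and equality of the two closures follow; $m$-dissipativity is then inherited from Proposition \ref{p16}.

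The final formula $\mathcal L u = \mathcal L^{OU}u - \langle\nabla V, Du\rangle$ for $u\in\mathcal F C^\infty_b$ is immediate from comparing the defining expressions \eqref{e16} and \eqref{e17}, using Proposition \ref{p2.1} to identify $DV$ with the G\^ateaux derivative $\nabla V$ $\nu$-a.e.\ (equivalently $\gamma$-a.e., since $\nu\sim\gamma$), so that the drift term $\langle Ax - DV(x), Du(x)\rangle$ splits as $\langle Ax, Du(x)\rangle - \langle\nabla V(x), Du(x)\rangle$, and the leading second-order parts coincide.

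The main obstacle is the $L^2(H,\nu)$-domination of the drift term $\langle Ax, D\varphi_n(x)\rangle$: one must confirm that $|x|\in L^2(H,\nu)$, which requires knowing $\nu$ has a second moment. This is where Remark \ref{r0} is essential — after the reduction there, $|x|^p e^{-V(x)}$ is bounded for all $p$, so $\int_H |x|^2\,\nu(dx) = \tfrac1Z\int_H |x|^2 e^{-V(x)}\gamma(dx) \le \tfrac CZ\int_H \gamma(dx) < \infty$ (or one simply bounds $|x|^2 e^{-V}$ by a constant and integrates $\gamma$); similarly $|DV|\in L^2(H,\nu)$ needs care because (H3) gives $|\nabla V|\in L^2(H,\gamma)$ and one transfers to $\nu$ using boundedness of $e^{-V}$. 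Everything else is routine bookkeeping with cylindrical functions and finitely many coordinates.
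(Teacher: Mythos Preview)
Your approximation of $\mathcal F C^2_b$ by $\mathcal E_A(H)$ via Lemma~\ref{l17}, together with the dominated-convergence justification for $\mathcal L\varphi_n\to\mathcal L\varphi$ in $L^2(H,\nu)$, is exactly the paper's intended argument and is correctly carried out (including the use of Remark~\ref{r0} and (H3) to dominate the drift terms).

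However, there is a genuine gap in the reverse inclusion. You claim that $\mathcal E_A(H)\subset\mathcal F C^2_b$ ``trivially'' because $\cos\langle h,x\rangle$ is a cylindrical function. This is false: an element $h\in D(A)$ need not have finite support in the eigenbasis $\{e_k\}$, so $x\mapsto\cos\langle h,x\rangle$ is in general \emph{not} of the form $\phi(x_1,\dots,x_d)$ and hence not in $\mathcal F C^2_b$. What is true is that the trigonometric polynomials produced by Lemma~\ref{l17}, when lifted to $H$, depend only on $x_1,\dots,x_d$ and therefore lie in $\mathcal E_A(H)\cap\mathcal F C^2_b$; but this by itself only gives $\mathcal F C^2_b\subset\overline{(\mathcal L,\mathcal E_A(H))}$, not the other inclusion.

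The repair is short: for $h\in D(A)$ set $h_n:=P_n h$, so that $\cos\langle h_n,\cdot\rangle\in\mathcal F C^2_b$. Since $A$ is diagonal in $\{e_k\}$, one has $h_n\to h$ and $Ah_n=P_nAh\to Ah$ in $H$; computing $\mathcal L$ on the exponentials explicitly and using $|x|,|\nabla V|\in L^2(H,\nu)$ (exactly your domination argument), dominated convergence gives $\cos\langle h_n,\cdot\rangle\to\cos\langle h,\cdot\rangle$ in $\mathcal L$-graph norm. This yields $\mathcal E_A(H)\subset\overline{(\mathcal L,\mathcal F C^2_b)}$, and then both closures coincide as you conclude. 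With this correction your proof is complete and follows the paper's route.
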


Since $(\mathcal L, D(\mathcal L))$ is an $m$-dissipative operator on
$L^2(H,\nu)$ by Proposition~\ref{p16}, every $\lambda>0$ is in its
resolvent set, hence $(\lambda-\mathcal L)^{-1}$ exists as a bounded
operator on $L^2(H,\nu)$. The following is one of the main results in
\cite{DR02}.
%
\begin{theorem}
\label{t19}
Let $\lambda>0$ and $f\in B_b(H)$. Then there exists a $\nu$-version of
$(\lambda-\mathcal L)^{-1}f$ denoted by $R_\lambda f$, which is
Lipschitz on $H$, more precisely
%
\begin{equation}
\label{e18''} \bigl|R_\lambda f(x)-R_\lambda f(y)\bigr|\le\sqrt
\frac{\pi}{\lambda} \|f\| _\infty |x-y|\qquad \forall x,y\in H.
\end{equation}
\end{theorem}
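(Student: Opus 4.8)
The plan is to reduce the estimate \eqref{e18''} to a gradient bound for the resolvent obtained from the theory in \cite{DR02}, and then integrate along a path. The starting point is that $(\mathcal L,D(\mathcal L))$ is $m$-dissipative on $L^2(H,\nu)$ (Proposition \ref{p16}/\ref{p18}), so for $\lambda>0$ and $f\in B_b(H)\subset L^2(H,\nu)$ the function $u:=(\lambda-\mathcal L)^{-1}f$ is well defined in $L^2(H,\nu)$; moreover $u\in D(\mathcal L)\subset W^{1,2}(H,\nu)$, so its $\nu$-a.e.\ defined gradient $Du\in L^2(H,\nu;H)$ makes sense. The key quantitative input from \cite{DR02} (this is exactly the type of statement proved there via the associated Dirichlet form and a Bismut--Elworthy--Li type argument, or via differentiating the transition semigroup $P_t$ of the process $X^V$ from Theorem \ref{t2.6} and using $\|DP_tg\|_\infty\le (\pi/(2t))^{1/2}\|g\|_\infty$ in the Ornstein--Uhlenbeck part while the $-\nabla V$ drift only helps by convexity) is the pointwise bound
\begin{equation}
\label{e-gradbound}
|\nabla R_\lambda f(x)|\le \sqrt{\tfrac{\pi}{\lambda}}\,\|f\|_\infty \quad\text{for every }x\in H,
\end{equation}
where $R_\lambda f$ is a suitable (Lipschitz, hence everywhere-defined) $\nu$-version of $u$. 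So the first step is simply to cite \cite{DR02} for the existence of such a version together with \eqref{e-gradbound}.

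Second, granting \eqref{e-gradbound}, the Lipschitz estimate \eqref{e18''} is a one-line consequence of the fundamental theorem of calculus along the segment $[x,y]$: writing $\varphi(t):=R_\lambda f(x+t(y-x))$, $t\in[0,1]$, which is absolutely continuous since $R_\lambda f$ is Lipschitz, one has $\varphi'(t)=\langle\nabla R_\lambda f(x+t(y-x)),\,y-x\rangle$ for a.e.\ $t$, whence
$$
|R_\lambda f(x)-R_\lambda f(y)|=\Big|\int_0^1\varphi'(t)\,dt\Big|\le \sup_{z\in H}|\nabla R_\lambda f(z)|\cdot|x-y|\le\sqrt{\tfrac{\pi}{\lambda}}\,\|f\|_\infty\,|x-y|.
$$
To make this rigorous one may first establish \eqref{e-gradbound} and the segment formula for $f$ in a dense, smooth subclass (e.g.\ $f\in\mathcal FC^2_b$, using that then $R_\lambda f$ is genuinely differentiable by elliptic regularity for the finite-dimensional approximations / the results of \cite{DR02}), obtaining the Lipschitz bound with the stated constant uniformly in this subclass, and then pass to general $f\in B_b(H)$ by approximation: a uniformly bounded sequence $f_n\to f$ pointwise yields $R_\lambda f_n\to R_\lambda f$ in $L^2(H,\nu)$, hence $\nu$-a.e.\ along a subsequence, and a family of functions that is equi-Lipschitz with the same constant and converges on a dense set has a (unique) Lipschitz limit with that same constant, which we take as the definition of $R_\lambda f$.

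The main obstacle is genuinely the gradient estimate \eqref{e-gradbound}, i.e.\ showing that the $-\nabla V(x)$ term — which is merely the measurable minimal section of a maximal monotone (multivalued) drift — does not spoil the dimension-free Ornstein--Uhlenbeck gradient bound. This is precisely where one needs the monotonicity \eqref{e2.6*} (so that the derivative flow of $X^V$ is contractive in the $-\nabla V$ direction) together with the structural hypotheses (H1)--(H3) that were checked above to match Hypotheses~1.1--1.2 of \cite{DR02}; the constant $\sqrt{\pi/\lambda}$ comes from $\int_0^\infty e^{-\lambda t}\sqrt{\pi/(2t)}\,dt=\sqrt{\pi/\lambda}\cdot(\sqrt\pi/2)\cdot(2/\sqrt\pi)$ after bounding the kernel of $DP_t$. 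Since all of this is the content of \cite[Theorem 2.3 and the accompanying gradient estimates]{DR02}, the proof in the paper can legitimately be the short one: invoke that theorem to get $R_\lambda f$ and \eqref{e-gradbound}, then integrate along segments to conclude \eqref{e18''}.
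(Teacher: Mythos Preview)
Your proposal is essentially correct and follows the same route as the paper: both reduce the statement to results in \cite{DR02}. The paper's proof is even shorter than yours---it cites \cite[Proposition~5.2]{DR02} directly for the Lipschitz estimate on the topological support $H_0$ of $\nu$, rather than first extracting a pointwise gradient bound and then integrating along segments. The one substantive point the paper adds, which you do not mention, is the observation that $H_0=H$: in \cite{DR02} the Lipschitz property is a~priori only asserted on the support of $\nu$, and it is the fact that $\nu$ is equivalent to the nondegenerate Gaussian $\gamma$ (hence has full support on $H$) that upgrades this to the global estimate \eqref{e18''} for all $x,y\in H$.
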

\begin{pf}
We first notice that $H_0$, defined in \cite{DR02} to be the
topological support of~$\nu$, in our case is equal to $H$, since $\nu$
has the same zero sets as the (nondegenerate) Gaussian measure $\gamma
$ on $H$. Hence, the assertion follows from the last sentence of~\cite{DR02},
Proposition~5.2.
\end{pf}
%
\begin{remark}
\label{r20}
In fact, each $R_\lambda$ is a kernel of total mass
$\lambda^{-1}$, absolutely continuous with respect to $\nu$ and
$(R_\lambda)_{\lambda>0}$ forms a resolvent of kernels on $(H,\mathcal
B(H))$. We refer to \cite{DR02}, Section~5, for details.
\end{remark}

Now we are going to solve \eqref{e15} for each $f\in B_b(H)$ if $\lambda
$ is large enough, and show that the solution $u\in L^2(H,\nu)$ has a
$\nu$-version which is Lipschitz continuous, with Lipschitz constant
dominated up to a constant by $\|f\|_\infty$.

First, we need the following.
%
\begin{lemma}
\label{l21}
Let $g\dvtx H\to\R$ be Lipschitz. Then $g\in W^{1,2}(H,\gamma)$, hence also
in $W^{1,2}(H,\nu)$ and $\|Dg\|_\infty\le\|g\|_{\mathrm{Lip}}$ ($={}$Lipschitz norm
of $g$). Furthermore, $Dg=\nabla g$, $\gamma$-a.e. where $\nabla g$ is
the G\^ateaux derivative of $g$ which exists $\gamma$-a.e.
\end{lemma}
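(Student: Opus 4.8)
The plan is to reduce everything to the one-dimensional/Gaussian case via the cylindrical approximation that was set up in Section 1. First I would recall that a Lipschitz function $g:H\to\R$ with Lipschitz constant $L:=\|g\|_{Lip}$ is, in particular, bounded on balls and of at most linear growth, hence $g\in L^2(H,\gamma)$ (using that $\gamma$ has all moments, which holds since $Q=-\tfrac12 A^{-1}$ is trace class by (H1)). The heart of the argument is to produce an element $G\in L^2(H,\gamma;H)$ with $|G(x)|\le L$ for $\gamma$-a.e.\ $x$ and to identify it with $Dg$, the closure of the gradient on $\mathcal F C^1_b$. To get such a $G$, I would use the finite-dimensional conditional expectations $g_n:=\E_\gamma[g\mid \mathcal B(P_nH)]$ (equivalently, mollifying $g$ along the first $n$ coordinates and projecting); each $g_n$ is again Lipschitz with constant $\le L$, is a cylindrical function, and after convolving with a Gaussian mollifier on $\R^n$ one obtains $g_n\in \mathcal F C^\infty_b$-type smooth cylindrical functions $g_{n,\varepsilon}$ whose finite-dimensional gradients are bounded by $L$ uniformly in $n,\varepsilon$. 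Since $g_{n,\varepsilon}\to g$ in $L^2(H,\gamma)$ and the gradients are bounded in $L^2(H,\gamma;H)$, a weak-compactness argument in the Hilbert space $L^2(H,\gamma;H)$ gives a weak limit $G$ with $\|\,|G|\,\|_\infty\le L$; closability of $D$ (which holds because $\gamma$ satisfies an integration-by-parts formula — indeed this is exactly the content of Lemma \ref{l2.3}(ii) and Lemma \ref{l2.4} applied with $m=\gamma$) then forces $G=Dg$, so $g\in W^{1,2}(H,\gamma)$ with $\|Dg\|_\infty\le L$. That $g\in W^{1,2}(H,\nu)$ then follows immediately because $\nu=\tfrac1Z e^{-V}\gamma$ and, by Remark \ref{r0}, $e^{-V}$ is bounded, so $W^{1,2}(H,\gamma)\hookrightarrow W^{1,2}(H,\nu)$ continuously.

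For the last assertion — that the G\^ateaux derivative $\nabla g$ exists $\gamma$-a.e.\ and equals $Dg$ — I would invoke exactly the fundamental Gaussian differentiability result already cited in Remark \ref{r1}(ii): by Aronszajn \cite{A76} and Phelps \cite{P78} (see also \cite[Section 10.6]{Bo10}), a Lipschitz function on a separable Hilbert space equipped with a non-degenerate Gaussian measure is G\^ateaux differentiable outside a Gauss-null (Aronszajn-null) set. So $\nabla g(x)\in H$ exists for $\gamma$-a.e.\ $x$, with $|\nabla g(x)|\le L$. It then remains to check that this pointwise G\^ateaux derivative is a $\gamma$-version of the Sobolev derivative $Dg$. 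This is a routine identification: for each fixed $k\in Q^{1/2}H$ one shows $\lim_{s\to0}\tfrac1s(g(\cdot+sk)-g(\cdot))=\langle\nabla g,k\rangle$ in $L^2(H,\gamma)$ by dominated convergence (the difference quotients are bounded by $L|k|$ pointwise and converge $\gamma$-a.e.\ by G\^ateaux differentiability), and by the Cameron--Martin integration-by-parts formula the left-hand side also equals $\langle Dg,k\rangle$ tested against $\mathcal F C^2_b$ functions — this is precisely the mechanism of Lemma \ref{l2.2} and Lemma \ref{l2.3}(i), now applied to the Lipschitz function $g$ in place of $V$. Hence $\langle \nabla g,k\rangle=\langle Dg,k\rangle$ $\gamma$-a.e.\ for all $k$ in the dense set $Q^{1/2}H$, which gives $\nabla g=Dg$ $\gamma$-a.e.

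The main obstacle, and the step I would be most careful about, is the construction of the approximating smooth cylindrical functions with \emph{uniformly} bounded gradients together with the passage to the limit: one must make sure that the Gaussian conditional expectations $g_n$ genuinely converge to $g$ in $L^2(H,\gamma)$ (a martingale-convergence argument, using that $\bigcup_n\mathcal B(P_nH)$ generates $\mathcal B(H)$ up to $\gamma$-null sets) and that mollification does not destroy the uniform Lipschitz bound (it does not, since convolution with a probability density cannot increase the Lipschitz constant). Everything else — the weak-compactness extraction, closability, and the $\gamma$-to-$\nu$ transfer — is soft. An alternative, slightly slicker route that avoids the explicit weak-compactness step is to quote directly the known fact (e.g.\ \cite{Bo10}, or it follows from \cite{A76},\cite{P78} plus Cameron--Martin) that a globally Lipschitz function lies in the Gaussian Sobolev space $W^{1,2}(H,\gamma)$ with $\|\,|Dg|\,\|_\infty\le\|g\|_{Lip}$ and with $Dg=\nabla g$ $\gamma$-a.e.; I would mention this as the clean reference but present the self-contained approximation argument since it is elementary and keeps the paper self-contained.
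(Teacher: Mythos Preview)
Your proof is correct, but it takes a longer route than the paper's. The paper's argument is just three lines: invoke Aronszajn--Phelps \cite{A76,P78} to get $\gamma$-a.e.\ G\^ateaux differentiability of $g$ with $|\nabla g|\le \|g\|_{Lip}$; observe that the difference quotients $s^{-1}(g(\cdot+se_i)-g(\cdot))$ are uniformly bounded by $\|g\|_{Lip}$ and converge $\gamma$-a.e.\ to $\langle\nabla g,e_i\rangle$, hence by dominated convergence $g\in\mathcal D_0$ with $\widetilde D g=\nabla g$; and then Lemma~\ref{l2.4} ($\widetilde W^{1,2}(H,\gamma)=W^{1,2}(H,\gamma)$) immediately gives $g\in W^{1,2}(H,\gamma)$ with $Dg=\nabla g$.

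The difference is one of economy. You first build $Dg$ by approximating $g$ with smooth cylindrical functions and extracting a weak limit, and only afterwards identify $Dg$ with $\nabla g$ via Cameron--Martin integration by parts. The paper reverses the order: it starts from $\nabla g$, and the machinery of Section~2 (the space $\mathcal D_0$ and Lemma~\ref{l2.4}) --- which was set up precisely to handle $V$ in Proposition~\ref{p2.1} --- applies verbatim to any Lipschitz $g$, making the conditional-expectation/mollification/weak-compactness step entirely unnecessary. Your approach has the virtue of being the standard Sobolev-space argument that works without any prior infrastructure, but since Lemma~\ref{l2.4} is already in hand, the paper's route is the natural one here.
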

\begin{pf}
By the fundamental result in \cite{A76,P78} the set $D_g$ of
all $x\in H$ where $g$ is G\^ateaux-(even Fr\'echet-) differentiable
has $\gamma$ measure one. Let $\nabla g$ denote its G\^ateaux
derivative. Since $|\nabla g|\in L^\infty(H,\mu)$, it follows trivially
that $g\in\mathcal D_0$ defined in \eqref{e5'}. Hence, by Lemma~\ref
{l2.4} the assertion follows.
\end{pf}
%
\begin{lemma}
\label{l22}
Consider the operator $T_\lambda\dvtx L^\infty(H,\nu)\to L^\infty(H,\nu)$
defined by
\[
T_\lambda\varphi= \langle B,\nabla R_\lambda\varphi\rangle,\qquad
\varphi\in L^\infty(H,\nu).
\]
Then for $\lambda\ge4\pi\|B\|_\infty^2$
\[
\|T_\lambda\varphi\|_{L^\infty(H,\nu)}\le\tfrac{1}2 \|\varphi
\|_{L^\infty
(H,\nu)} \qquad\forall \varphi\in L^\infty(H,\nu).
\]
\end{lemma}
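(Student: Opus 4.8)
The statement to prove is Lemma \ref{l22}: for $\lambda \ge 4\pi\|B\|_\infty^2$, the operator $T_\lambda \varphi = \langle B, \nabla R_\lambda \varphi\rangle$ satisfies $\|T_\lambda \varphi\|_{L^\infty(H,\nu)} \le \frac12 \|\varphi\|_{L^\infty(H,\nu)}$.

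The plan is to combine Theorem \ref{t19} with Lemma \ref{l21}. First, given $\varphi \in L^\infty(H,\nu)$, pick a bounded Borel version $f$ of $\varphi$ with $\|f\|_\infty \le \|\varphi\|_{L^\infty(H,\nu)}$ (using that $\nu$ and $\gamma$ have the same zero sets, so $L^\infty(H,\nu) = L^\infty(H,\mathcal B(H))$ up to $\nu$-null modifications; actually one should just take $f \in B_b(H)$ representing $\varphi$). By Theorem \ref{t19}, $R_\lambda f$ is a genuinely Lipschitz function on $H$ with Lipschitz constant at most $\sqrt{\pi/\lambda}\,\|f\|_\infty$. By Lemma \ref{l21}, the $\gamma$-a.e. (hence $\nu$-a.e.) defined Gâteaux derivative $\nabla R_\lambda f$ satisfies $\|\nabla R_\lambda f\|_\infty \le \|R_\lambda f\|_{\mathrm{Lip}} \le \sqrt{\pi/\lambda}\,\|f\|_\infty$. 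Then pointwise ($\nu$-a.e.) by Cauchy--Schwarz,
$$
|T_\lambda \varphi(x)| = |\langle B(x), \nabla R_\lambda f(x)\rangle| \le |B(x)|\,|\nabla R_\lambda f(x)| \le \|B\|_\infty \sqrt{\frac{\pi}{\lambda}}\,\|f\|_\infty.
$$
Taking the essential supremum over $x$ and then using $\lambda \ge 4\pi\|B\|_\infty^2$, which gives $\|B\|_\infty\sqrt{\pi/\lambda} \le \|B\|_\infty \cdot \frac{1}{2\|B\|_\infty} = \frac12$, yields $\|T_\lambda \varphi\|_{L^\infty(H,\nu)} \le \frac12 \|\varphi\|_{L^\infty(H,\nu)}$.

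There is essentially no hard step here; the lemma is a bookkeeping consequence of the two previously established results. The one point requiring a word of care is the consistency of the notation $\nabla R_\lambda \varphi$: it should be checked that $T_\lambda$ is well-defined on $L^\infty(H,\nu)$, i.e. that the value $\langle B, \nabla R_\lambda f\rangle$ depends (up to $\nu$-null sets) only on the $\nu$-class of $\varphi$ and not on the chosen Borel representative $f$. This follows because if $f_1 = f_2$ $\nu$-a.e. then, since $R_\lambda$ is a kernel absolutely continuous with respect to $\nu$ (Remark \ref{r20}), $R_\lambda f_1 = R_\lambda f_2$ everywhere on $H$, so in particular their Gâteaux derivatives agree wherever both exist. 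Given that, the chain of inequalities above is immediate, and the constant $4\pi\|B\|_\infty^2$ is exactly what is needed to bring the factor $\|B\|_\infty\sqrt{\pi/\lambda}$ down to $\tfrac12$. (If $B \equiv 0$ the statement is trivial, so one may assume $\|B\|_\infty > 0$ when invoking the bound on $\lambda$.)
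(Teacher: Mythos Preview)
Your proof is correct and follows essentially the same approach as the paper: invoke the Lipschitz estimate \eqref{e18''} from Theorem \ref{t19}, then Lemma \ref{l21} to bound $|\nabla R_\lambda \varphi|$ by $\sqrt{\pi/\lambda}\,\|\varphi\|_\infty$, and combine with $|B|\le \|B\|_\infty$ and the choice $\lambda\ge 4\pi\|B\|_\infty^2$. Your additional remarks on well-definedness of $T_\lambda$ on $L^\infty(H,\nu)$ via Remark \ref{r20} are a welcome clarification that the paper leaves implicit.
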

\begin{pf}
We have by \eqref{e18''} and Lemma~\ref{l21} that for $\varphi\in
L^\infty(H,\mu)$
\[
\|T_\lambda\varphi\|_{L^\infty(H,\nu)}\le\|B\|_\infty \sqrt
\frac{\pi
}{\lambda} \|\varphi\|_{L^\infty(H,\nu)},
\]
and the assertion follows.
\end{pf}
%
\begin{proposition}
\label{p23}
Let $f\in B_b(H)$ and $\lambda\ge4\pi\|B\|_\infty^2$. Then \eqref{e15}
has a unique solution given by the Lipschitz function
\[
u:=R_\lambda\bigl((I-T_\lambda)^{-1}f\bigr).
\]
This solution is Lipschitz on $H$ with Lipschitz norm
\[
\|u\|_{\mathrm{Lip}}\le2\sqrt\frac{\pi}{\lambda} \|f\|_\infty.
\]
\end{proposition}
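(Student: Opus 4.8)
The plan is to reduce \eqref{e15} to the scalar resolvent equation $(\lambda-\mathcal L)u=g$, which is already understood from Theorem \ref{t19} and Remark \ref{r20}, by a Neumann series in $T_\lambda$. By Lemma \ref{l22}, for $\lambda\ge 4\pi\|B\|_\infty^2$ the operator $T_\lambda$ is a strict contraction on $L^\infty(H,\nu)$ with operator norm at most $\tfrac12$, so $(I-T_\lambda)^{-1}=\sum_{n\ge 0}T_\lambda^{\,n}$ is a bounded operator of norm at most $2$; hence $g:=(I-T_\lambda)^{-1}f\in L^\infty(H,\nu)$ with $\|g\|_{L^\infty(H,\nu)}\le 2\|f\|_\infty$ (using $\|f\|_{L^\infty(H,\nu)}\le\|f\|_\infty$). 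I would then set $u:=R_\lambda g$, where $R_\lambda g$ is the Lipschitz $\nu$-version of $(\lambda-\mathcal L)^{-1}g$ furnished by Theorem \ref{t19} applied to a bounded Borel representative of $g$; by Remark \ref{r20} (the resolvent kernel is $\nu$-absolutely continuous) together with the fact that $\nu$ has full topological support in $H$, this version does not depend on the chosen representative. Then $u\in D(\mathcal L)$ with $(\lambda-\mathcal L)u=g$ in $L^2(H,\nu)$ (note $g\in L^2(H,\nu)$ since $\nu$ is finite), and \eqref{e18''} gives immediately
\[
\|u\|_{Lip}\le\sqrt{\tfrac\pi\lambda}\,\|g\|_{L^\infty(H,\nu)}\le 2\sqrt{\tfrac\pi\lambda}\,\|f\|_\infty .
\]

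Next I would verify that $u$ solves \eqref{e15}. Since $u$ is Lipschitz, Lemma \ref{l21} gives $Du=\nabla u$ $\nu$-a.e., so by the definition of $T_\lambda$ in Lemma \ref{l22} one has $\langle B,Du\rangle=\langle B,\nabla R_\lambda g\rangle=T_\lambda g$ $\nu$-a.e.; hence
\[
\lambda u-\mathcal L u-\langle B,Du\rangle=(\lambda-\mathcal L)u-T_\lambda g=g-T_\lambda g=(I-T_\lambda)g=f
\]
$\nu$-a.e., which is \eqref{e15}. For uniqueness I would argue directly: if $w\in D(\mathcal L)$ solves the homogeneous equation $\lambda w-\mathcal L w-\langle B,Dw\rangle=0$, then, using $D(\mathcal L)\subset W^{1,2}(H,\nu)$ and the Dirichlet form identity $-\int_H(\mathcal L w)\,w\,d\nu=\tfrac12\int_H|Dw|^2\,d\nu$, testing the equation with $w$ gives $\lambda\|w\|_{L^2(H,\nu)}^2+\tfrac12\|Dw\|_{L^2(H,\nu)}^2=\int_H\langle B,Dw\rangle\,w\,d\nu\le\tfrac12\|Dw\|_{L^2(H,\nu)}^2+\tfrac12\|B\|_\infty^2\|w\|_{L^2(H,\nu)}^2$, whence $(\lambda-\tfrac12\|B\|_\infty^2)\|w\|_{L^2(H,\nu)}^2\le 0$. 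Since $\lambda\ge 4\pi\|B\|_\infty^2>\tfrac12\|B\|_\infty^2$, this forces $w=0$, so the solution in $D(\mathcal L)$ is unique and coincides with $u=R_\lambda((I-T_\lambda)^{-1}f)$.

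The step demanding most care is the identification in the verification paragraph: one must ensure that applying the $L^2(H,\nu)$-resolvent $(\lambda-\mathcal L)^{-1}$ to the equivalence class $g$ really yields the specific Lipschitz version $R_\lambda g$ of Theorem \ref{t19}, and that the $L^2$-closed derivative $Du$ agrees $\nu$-a.e. with the G\^ateaux derivative $\nabla R_\lambda g$ that enters the definition of $T_\lambda$; this is exactly where Lemma \ref{l21}, Remark \ref{r20} and the full support of $\nu$ are used. (If $\|B\|_\infty=0$ the statement is trivial, since then $T_\lambda=0$, $g=f$ and $u=R_\lambda f$.)
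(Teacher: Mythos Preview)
Your proof is correct and follows essentially the same approach as the paper: invert $I-T_\lambda$ by the Neumann series (using Lemma \ref{l22}), set $u=R_\lambda g$ with $g=(I-T_\lambda)^{-1}f$, verify \eqref{e15} via $(\lambda-\mathcal L)u=g$ and $\langle B,Du\rangle=T_\lambda g$, and read off the Lipschitz bound from \eqref{e18''}. The only difference is that the paper does not reprove uniqueness here (it was already observed, just before Proposition \ref{p23}, via weak sectoriality of the bilinear form $\mathcal E_B$), whereas you give a direct self-contained energy argument; both are valid, and your version has the minor advantage of making the required lower bound on $\lambda$ explicit.
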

\begin{pf}
Since the operator norm of $T_\lambda$ is less than $\frac{1}2$, the
operator $(I-T_\lambda)^{-1}$ exists as a continuous operator on
$L^\infty(H,\nu)$ with operator norm less than $2$. Furthermore, by
Theorem~\ref{t19} and Lemma~\ref{l21}
\begin{eqnarray*}
&&(\lambda-\mathcal L)R_\lambda
\bigl((I-T_\lambda)^{-1}f\bigr)- \bigl\langle
B,DR_\lambda\bigl((I-T_\lambda)^{-1}f\bigr) \bigr
\rangle
\\
&&\qquad=(I-T_\lambda)^{-1}f-T_\lambda\bigl((I-T_\lambda)^{-1}f
\bigr)=f.
\end{eqnarray*}
The final part follows from \eqref{e18''}
\end{pf}

Having established the result for the scalar equation (\ref{e15}) for
$\lambda\ge4\pi\|B\|_\infty^2$, we may
prove it for the vector equation (\ref{PDE vector}), whose solution $U$ has
components $u^{i}$ satisfying the equation
%
\begin{equation}
( \lambda+\lambda_{i} ) u^{i}-\mathcal{L}u^{i}-
\bigl\langle B(x),Du^{i}\bigr\rangle=f^{i}, \label{eq for U^i}
\end{equation}
where $f^{i}$ are the components of the vector function $F\dvtx H\rightarrow H$.
[$U ( x ) =\sum_{i=1}^{\infty}u^{i} ( x ) e_{i}$,
$F ( x ) =\sum_{i=1}^{\infty}f^{i} ( x ) e_{i}$].

We have by Proposition~\ref{p23}
\[
\bigl\llvert u^{i} ( x ) -u^{i} ( y ) \bigr\rrvert
^{2} \leq\frac{4\pi}{\lambda+\lambda_{i}}\bigl\Vert f^{i}\bigr\Vert_{\infty}^{2}
\llvert x-y\rrvert ^2 \leq\frac{4\pi}{\lambda+\lambda_{i}}\Vert F\Vert
_{\infty}^{2}\llvert x-y\rrvert ^2,
\]
hence
\[
\sum_{i=1}^{\infty}\bigl\llvert
u^{i} ( x ) -u^{i} ( y ) \bigr\rrvert ^{2}\leq c
( \lambda )^2 \Vert F\Vert_{\infty
}^{2}\llvert x-y
\rrvert ^2,
\]
where $c ( \lambda ): =\sum_{i=1}^{\infty}\frac{4\pi}{\lambda
+\lambda_{i}}$. This series converges and $\lim_{\lambda\rightarrow
\infty
}c ( \lambda ) =0$. Moreover, $\llvert  U ( x )
-U ( y ) \rrvert ^{2}=\sum_{i=1}^{\infty}\llvert
u^{i} ( x ) -u^{i} ( y ) \rrvert ^{2}$, hence we
have proved the following.
%
\begin{lemma}
\label{lemma uniform est}
$U (=U_\lambda)$ defined above satisfies
\[
\bigl\llvert U ( x ) -U ( y ) \bigr\rrvert \leq c ( \lambda ) \Vert F
\Vert_{\infty} \llvert x-y\rrvert,\qquad  x,y\in H
\]
with $\lim_{\lambda\rightarrow\infty}c ( \lambda ) =0$.
\end{lemma}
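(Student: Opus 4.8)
The statement is essentially a corollary of the scalar result Proposition~\ref{p23}, so the plan is to run that result through each component of $U$ and then recombine by Parseval. First I would note that, for fixed $i$, the component equation~\eqref{eq for U^i} is exactly the scalar equation~\eqref{e15} with $\lambda$ replaced by $\lambda+\lambda_i$ and $f$ by $f^i$; since $-A\ge\omega I>0$ the eigenvalue $\lambda_i$ is $\ge\omega>0$, so $\lambda+\lambda_i\ge\lambda\ge 4\pi\|B\|_\infty^2$ and Proposition~\ref{p23} applies directly, producing a unique Lipschitz function $u^i$ with
\[
\|u^i\|_{Lip}\le 2\sqrt{\tfrac{\pi}{\lambda+\lambda_i}}\,\|f^i\|_\infty\le 2\sqrt{\tfrac{\pi}{\lambda+\lambda_i}}\,\|F\|_\infty,
\]
where $\|f^i\|_\infty=\sup_x|\langle F(x),e_i\rangle|\le\|F\|_\infty$. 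Using in addition that $R_{\lambda+\lambda_i}$ has total mass $(\lambda+\lambda_i)^{-1}$ (Remark~\ref{r20}) and that $(I-T_{\lambda+\lambda_i})^{-1}$ has operator norm $\le 2$ on $L^\infty(H,\nu)$, one also obtains $\|u^i\|_\infty\le 2(\lambda+\lambda_i)^{-1}\|F\|_\infty$, which together with the series bound below confirms that $U(x)=\sum_i u^i(x)e_i$ is a well-defined element of $H$ for every $x$, as required for ``$U$ defined above'' to make sense.

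Next I would square the Lipschitz estimate, sum over $i$, and use that $\{e_i\}$ is an orthonormal basis of $H$:
\[
|U(x)-U(y)|^2=\sum_{i=1}^\infty|u^i(x)-u^i(y)|^2\le\Big(\sum_{i=1}^\infty\tfrac{4\pi}{\lambda+\lambda_i}\Big)\|F\|_\infty^2\,|x-y|^2=:c(\lambda)^2\,\|F\|_\infty^2\,|x-y|^2 .
\]
Taking square roots gives the asserted inequality, so it only remains to control $c(\lambda)^2=\sum_i\frac{4\pi}{\lambda+\lambda_i}$. Here the $\lambda_i$ are the eigenvalues of $-A$, and by Hypothesis (H1) ($A^{-1}$ of trace class) we have $\sum_i\lambda_i^{-1}<\infty$; hence $\frac{1}{\lambda+\lambda_i}\le\lambda_i^{-1}$ shows $c(\lambda)<\infty$ for every $\lambda>0$, while, since each term $\frac{4\pi}{\lambda+\lambda_i}\to 0$ as $\lambda\to\infty$ and is dominated for $\lambda\ge 1$ by the summable sequence $4\pi\lambda_i^{-1}$, dominated convergence yields $c(\lambda)\to 0$ as $\lambda\to\infty$.

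There is no real obstacle here: once Proposition~\ref{p23} is available the argument is routine bookkeeping. The one point to get right is the last step --- recognising that the coefficients $\lambda_i$ occurring in~\eqref{eq for U^i} are the eigenvalues of $-A$ (which tend to $+\infty$ with summable reciprocals), not those of $Q$, since it is only for the former that the series defining $c(\lambda)$ converges and vanishes in the limit. This is precisely where the trace-class hypothesis (H1) is used.
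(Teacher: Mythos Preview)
Your proof is correct and follows essentially the same route as the paper: apply the scalar Lipschitz bound from Proposition~\ref{p23} componentwise with $\lambda$ replaced by $\lambda+\lambda_i$, square and sum via Parseval, and then use the trace-class hypothesis (H1) on $A^{-1}$ to see that $c(\lambda)^2=\sum_i\frac{4\pi}{\lambda+\lambda_i}$ is finite and tends to zero. Your additional remarks --- that $U(x)\in H$ is well-defined, and the explicit identification of the $\lambda_i$ in~\eqref{eq for U^i} as the eigenvalues of $-A$ rather than of $Q$ so that the series converges --- are helpful clarifications the paper leaves implicit.
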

%
\subsection{It\^o formula for Lipschitz functions}\label{sec3.2}

Below we want to apply It\^o's formula to $u(X_t), t\ge0$, where $u$
is as in Proposition~\ref{p23} and $(X_t)_{t\ge0}$ are the paths of
the Markov process $\mathbf{M}$ from Theorem~\ref{t2.6}.
Since $u$ is only Lipschitz and we are on the infinite dimensional
state space $H$, this is a delicate issue. To give a technically clean
proof, we need a specific approximation of the solution $u$ in
Proposition~\ref{p23} by functions $u_n\in\mathcal FC^2_b$, $n\in\N$.
More precisely, we shall prove the following result.
%
\begin{proposition}
\label{p26new}
Let $\lambda>0$ and $g\in B_b(H)\cap D(\mathcal L^{\mathrm{OU}},C^1_{b,2}(H))$
(for the definition of the latter see below). Set
\[
w:=R_\lambda g.
\]
Then there exists a sequence $u_n\in\mathcal FC^2_b, n\in\N$, such that
%
\begin{eqnarray}
\label{e24new} %
 &&\sup_{n\in\N} \bigl(
\|u_n\|_\infty+\|\nabla u_n\|_\infty\bigr)\le 2
\sqrt\pi \max\bigl\{\lambda^{-1},\lambda^{-1/2}\bigr\} \|g
\|_\infty,
\nonumber
\\[-8pt]
\\[-8pt]
\nonumber
&&\lim_{n\to\infty}\int_H \bigl[\bigl|\mathcal
L^{\mathrm{OU}}(w-u_n)\bigr|^2+\bigl|\nabla (w-u_n)\bigr|^2+(w-u_n)^2
\bigr] \,d\nu=0.
\end{eqnarray}
In particular, $u_n\to w$ as $n\to\infty$ in $\mathcal L$-graph norm
[on $L^2(H,\nu)$] and
\[
\mathcal L=\mathcal L^{\mathrm{OU}}w- \langle\nabla V,\nabla w \rangle.
\]
\end{proposition}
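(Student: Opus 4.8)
The plan is to build the approximating sequence $u_n$ in two stages: first approximate the resolvent $R_\lambda$ at the level of the underlying Ornstein--Uhlenbeck semigroup, then regularize in finitely many variables to land in $\mathcal{F}C^2_b$. Since $w = R_\lambda g = \int_0^\infty e^{-\lambda t} P_t^{\nu} g\, dt$, where $P_t^\nu$ is the transition semigroup associated to $(\mathcal L, D(\mathcal L))$ on $L^2(H,\nu)$ (which by Proposition \ref{p23} and Theorem \ref{t19} has the Lipschitz version of \eqref{e18''}), I would first approximate the time integral by Riemann sums $\sum_k c_k P_{t_k}^\nu g$ and/or replace $g$ by the smooth, $C^1_{b,2}$-regular function it already is, so that the only genuine loss of regularity is the infinite number of variables. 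The hypothesis $g \in D(\mathcal L^{OU}, C^1_{b,2}(H))$ is exactly what is needed so that $\mathcal L^{OU} w$ can be controlled: writing the scalar equation \eqref{e15} as $\lambda w - \mathcal L^{OU} w = g - \langle \nabla V, \nabla w\rangle - T_\lambda(\cdots)$ type identities, one gets $\mathcal L^{OU} w \in L^2(H,\nu)$ and the pointwise/a.e.\ identity $\mathcal L w = \mathcal L^{OU} w - \langle \nabla V, \nabla w\rangle$ on the regular functions, which is the last displayed claim. (I note the statement as printed seems to have a typo: it should read $\mathcal L w = \mathcal L^{OU} w - \langle \nabla V, \nabla w\rangle$, matching Proposition \ref{p18}.)

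Second, for the finite-dimensional reduction I would use the projections $P_n$ and the conditional expectation operators $\mathbb E^\gamma[\,\cdot\,|\,\mathcal B(P_n H)]$ (equivalently the Mehler-type averaging), which map Lipschitz functions to cylindrical Lipschitz functions without increasing the Lipschitz norm, and which converge in $L^2$-type norms by the martingale convergence theorem together with the Gaussian--Sobolev structure; then mollify each cylindrical function on $\mathbb R^n$ via convolution with a Dirac sequence as in Lemma \ref{l17} to obtain genuine $\mathcal{F}C^2_b$ functions $u_n$. The uniform bound $\sup_n(\|u_n\|_\infty + \|\nabla u_n\|_\infty) \le 2\sqrt\pi\,\max\{\lambda^{-1},\lambda^{-1/2}\}\,\|g\|_\infty$ then follows because conditional expectation and mollification are both contractions for $\|\cdot\|_\infty$ and do not increase Lipschitz constants, combined with the estimate $\|w\|_\infty \le \lambda^{-1}\|g\|_\infty$, $\|w\|_{Lip}\le 2\sqrt{\pi/\lambda}\,\|g\|_\infty$ from Proposition \ref{p23} (applied with $B=0$, i.e.\ $T_\lambda = 0$, which is the relevant case since here $w=R_\lambda g$ directly). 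The $L^2(H,\nu)$-convergence of $(w-u_n)$, $\nabla(w-u_n)$ and $\mathcal L^{OU}(w-u_n)$ to zero is where the regularity hypothesis on $g$ pays off: for the first two one only needs $w \in W^{1,2}(H,\nu)$, which follows from Lemma \ref{l21}; for the third one commutes $\mathcal L^{OU}$ past the finite-dimensional projection (using that $Q$ is diagonalized by the $e_k$ so $\mathcal L^{OU}$ respects the cylindrical structure up to controllable remainder terms), and then uses $\mathcal L^{OU} w = \lambda w - g + \langle \nabla V, \nabla w\rangle \in L^2(H,\nu)$ — here the integrability $\int_H |\nabla V|^2 d\gamma < \infty$ from (H3) and $\|\nabla w\|_\infty < \infty$ give $\langle \nabla V, \nabla w\rangle \in L^2(H,\nu)$.

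The main obstacle I anticipate is the convergence $\mathcal L^{OU} u_n \to \mathcal L^{OU} w$ in $L^2(H,\nu)$: the operator $\mathcal L^{OU}$ does not commute exactly with finite-dimensional conditioning (the trace term and the $\langle Ax, D\cdot\rangle$ term pick up contributions from the truncated coordinates), so one must check that the error terms — which involve $\sum_{k>n}\lambda_k$-type tails of $\mathrm{Tr}(Q)$ and the second derivatives in the omitted variables — go to zero, using the trace-class assumption (H1) on $A^{-1}$ and the fact that $g$'s $C^1_{b,2}$-regularity transfers suitable second-order control to $w$ through the equation. Once this is in place, the ``in particular'' statements are immediate: graph-norm convergence $u_n \to w$ in $L^2(H,\nu)$ follows from $\lambda u_n - \mathcal L^{OU} u_n + \langle \nabla V, \nabla u_n\rangle \to \lambda w - \mathcal L^{OU} w + \langle\nabla V,\nabla w\rangle$, and by closedness of $(\mathcal L, D(\mathcal L))$ (Proposition \ref{p18}) the limit identifies $\mathcal L w$ with $\mathcal L^{OU} w - \langle \nabla V, \nabla w\rangle$, with $w \in D(\mathcal L)$.
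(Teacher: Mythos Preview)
Your approach differs fundamentally from the paper's, and it contains a genuine gap.

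\textbf{The gap.} The crux of your argument is the step where you assert $\mathcal L^{OU} w \in L^2(H,\nu)$ via the identity
\[
\mathcal L^{OU} w = \lambda w - g + \langle \nabla V, \nabla w\rangle.
\]
But this identity is equivalent to $\mathcal L w = \mathcal L^{OU} w - \langle \nabla V, \nabla w\rangle$, which is precisely one of the conclusions you are asked to prove. The function $w = R_\lambda g$ lies in $D(\mathcal L)$ (the closure in $L^2(H,\nu)$) by construction, but nothing so far places it in any domain on which $\mathcal L^{OU}$ is defined, since $R_\lambda$ is the resolvent of $\mathcal L$, not of $\mathcal L^{OU}$. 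The hypothesis $g \in D(\mathcal L^{OU}, C^1_{b,2}(H))$ does not help here: it gives regularity for $g$, not for $R_\lambda g$. Without an independent proof that $w \in D(\mathcal L^{OU})$, your conditional-expectation-plus-mollification scheme cannot control $\mathcal L^{OU} u_n$: the conditional expectations of $w$ are merely Lipschitz in $\mathbb R^n$, and once you mollify, the second derivatives in $\mathcal L^{OU}\tilde u_n$ have no a priori $L^2$-bound unless you already know $\mathcal L^{OU} w$ exists in $L^2$ and commutes (up to vanishing errors) with your regularizations.

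\textbf{What the paper does instead.} The paper breaks this circularity by an intermediate regularization of the drift $\nabla V$ rather than of $w$. It takes Yosida approximations $F_\alpha$ of $-\partial V$, further smoothed to $F_{\alpha,\beta}$, and considers the resolvents $R_\lambda^{\alpha,\beta}$ of the operator $\mathcal L^{OU} + \langle F_{\alpha,\beta}, D\,\cdot\,\rangle$. For $v \in C^2_b(H)$ one has $R_\lambda^{\alpha,\beta} v \in C^2_b(H)\cap D(\mathcal L^{OU}, C^1_{b,2}(H))$, a class on which $\mathcal L^{OU}$ is defined \emph{pointwise} and on which the identity $\mathcal L u = \mathcal L^{OU} u - \langle \nabla V, Du\rangle$ is already known from Proposition~\ref{p17}. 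Two lemmas then show (i) $R_\lambda^{\alpha_n,\beta_n} v \to R_\lambda v$ in $\mathcal L$-graph norm as $\alpha_n,\beta_n\to 0$, using that $F_{\alpha_n,\beta_n}\to\nabla V$ in $L^2(H,\nu)$ and the uniform gradient bound \eqref{e30new}; and (ii) approximating $g$ in $L^2(H,\nu)$ by $v_k\in C^2_b(H)$ with $\|v_k\|_\infty\le 2\|g\|_\infty$ yields $u_k := R_\lambda^{\alpha_{n_k},\beta_{n_k}} v_k \in C^2_b(H)\cap D(\mathcal L^{OU}, C^1_{b,2}(H))$ satisfying \eqref{e24new}. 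Only \emph{then}, with these genuinely $C^2_b$ approximants in hand, does the paper compose with projections $P_n$ to land in $\mathcal F C^2_b$; for such $u$ the convergence $\mathcal L^{OU}(u\circ P_n)\to \mathcal L^{OU} u$ is elementary because everything is pointwise defined. The identity for $\mathcal L w$ then falls out by closedness, not as an input.

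In short: your projection-and-mollify idea is how the paper finishes, but only after the Yosida-approximation step has produced approximants that are already $C^2_b$ with uniformly bounded gradients. That step is the missing idea in your proposal.
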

For the proof, we need some more details from \cite{DR02}.

Define for $\lambda>0$ and $\varphi\in B_b(H)$
%
\begin{equation}
\label{e24'} R\bigl(\lambda,\mathcal L^{\mathrm{OU}}\bigr)\varphi(x)=\int
_0^\infty e^{-\lambda t} P_t
\varphi(x)\,dt,
\end{equation}
where $P_t$ is defined as in \eqref{e8'}. Then
\[
R\bigl(\lambda,\mathcal L^{\mathrm{OU}}\bigr) \bigl(C^1_{b,2}(H)
\bigr)\subset C^1_{b,2}(H),
\]
where $C^1_{b,2}(H)$ denotes the set of all $\varphi\in C^1_{b}(H)$
such that
\[
\sup_{x\in H} \frac{|\varphi(x)|}{1+|x|^2}<\infty \quad\mbox{and}\quad \sup
_{x\in H} \frac{|D\varphi(x)|}{1+|x|^2}<\infty.
\]
As in \cite{DR02}, we set
\[
D\bigl(\mathcal L^{\mathrm{OU}}, C^1_{b,2}(H)\bigr):=R
\bigl(\lambda,\mathcal L^{\mathrm{OU}}\bigr) \bigl(C^1_{b,2}(H)
\bigr),
\]
which by this resolvent equation is independent of $\lambda>0$ and is a
natural domain for the operator $\mathcal L^{\mathrm{OU}}$.
%
\begin{proposition}
\label{p17}
Let $u\in D(\mathcal L^{\mathrm{OU}}, C^1_{b,2}(H))$. Then there exists $\varphi
_n\in\mathcal E_A(H)$, $n\in\N$, such that $\varphi_n\to u$ in $\nu
$-measure and for some $C\in(0,\infty)$
\[
\bigl|\varphi_n(x)\bigr|+\bigl|D\varphi_n(x)\bigr|+\bigl|\mathcal
L^{\mathrm{OU}}\varphi_n(x)\bigr|\le C\bigl(1+|x|^2\bigr)\qquad
\forall x\in H, n\in\N.
\]
In particular, $u\in D(\mathcal L)$ and $\varphi_n\to u$ in the graph
norm of $\mathcal L$ on $L^2(H,\nu)$ and
\[
\mathcal L u=\mathcal L^{\mathrm{OU}}u- \langle\nabla V,Du \rangle.
\]
\end{proposition}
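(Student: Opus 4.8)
The plan is to realise the approximants as $\varphi_n:=R(\lambda,\mathcal L^{OU})g_n$ for a suitable sequence $g_n\in\mathcal E_A(H)$ converging to $g$, and then to push every convergence down to $L^2(H,\nu)$, where the closedness of $(\mathcal L,D(\mathcal L))$ finishes the argument. First I would record the algebra that makes $\mathcal E_A(H)$ compatible with the Ornstein--Uhlenbeck resolvent: by the Mehler formula \eqref{e8'}, for $h\in D(A)$ one has $P_t e^{i\langle h,\cdot\rangle}=e^{-\frac12\langle Q_th,h\rangle}\,e^{i\langle e^{tA}h,\cdot\rangle}$ with $e^{tA}h\in D(A)$, so $P_t$ maps $\mathcal E_A(H)$ into itself, and hence so does $R(\lambda,\mathcal L^{OU})=\int_0^\infty e^{-\lambda t}P_t\,dt$ up to an arbitrarily small graph-norm error after a Riemann-sum discretisation of the $t$-integral (each term of such a sum is a finite linear combination of exponentials, so lies in $\mathcal E_A(H)$). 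Moreover an element of $\mathcal E_A(H)$ whose frequencies lie in $\mathrm{span}(e_1,\dots,e_m)$ is a cylindrical $C^\infty_b$-function, so Proposition \ref{p18} applies to it and gives $\mathcal L\varphi=\mathcal L^{OU}\varphi-\langle\nabla V,D\varphi\rangle$, while $\mathcal L^{OU}\varphi=\lambda\varphi-g$ whenever $\varphi=R(\lambda,\mathcal L^{OU})g$.

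Second I would construct $g_n$. Given $g\in C^1_{b,2}(H)$, multiply by radial cut-offs $\chi(|\cdot|/R)$ to reduce to bounded $C^1$ functions; the weighted quantity $\|\psi\|:=\sup_x(|\psi(x)|+|D\psi(x)|)(1+|x|^2)^{-1}$ stays bounded uniformly in $R$, since the extra gradient term $g\,\chi'(|\cdot|/R)/R$ is supported on $\{|x|\sim R\}$ and is there $O(1/R)$ relative to $1+|x|^2$. Then compose with the finite-rank projections $P_m$ (which does not increase $\sup(|\psi|+|D\psi|)$ and keeps cylindricality in $x_1,\dots,x_m$), mollify in those $m$ variables, and apply Lemma \ref{l17} to approximate the resulting bounded smooth cylindrical function by trigonometric polynomials in $x_1,\dots,x_m$; since the $e_k$ are eigenvectors of $A$, these lie in $\mathcal E_A(H)$. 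A diagonal argument produces $g_n\in\mathcal E_A(H)$, cylindrical, with $\sup_n\|g_n\|<\infty$ and $g_n\to g$, $Dg_n\to Dg$ pointwise on $H$.

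Third, set $\varphi_n:=R(\lambda,\mathcal L^{OU})g_n\in\mathcal E_A(H)$ (possibly after the discretisation above, which only perturbs things by a vanishing graph-norm error). Running the computation behind the inclusion $R(\lambda,\mathcal L^{OU})(C^1_{b,2}(H))\subset C^1_{b,2}(H)$ on the Mehler integral, and using $\sup_n\|g_n\|<\infty$, one obtains $\sup_n\|\varphi_n\|<\infty$, hence $\sup_n\|\mathcal L^{OU}\varphi_n\|<\infty$ via $\mathcal L^{OU}\varphi_n=\lambda\varphi_n-g_n$; in particular $|\varphi_n|+|D\varphi_n|+|\mathcal L^{OU}\varphi_n|\le C(1+|x|^2)$ uniformly in $n$, which is the asserted bound. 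Dominated convergence inside the Mehler integral gives $\varphi_n\to u$, $D\varphi_n\to Du$, $\mathcal L^{OU}\varphi_n\to\mathcal L^{OU}u$ pointwise, so $\varphi_n\to u$ in $\nu$-measure. Finally, $\nu$ integrates every polynomial in $|x|$ (as $e^{-V}$ is bounded) and, by Hypothesis (H3) together with Remark \ref{r0} (boundedness of $(1+|x|^2)^2e^{-V}$), $\int_H|\nabla V|^2(1+|x|^2)^2\,d\nu<\infty$; hence dominated convergence upgrades the above to $\varphi_n\to u$, $\mathcal L^{OU}\varphi_n\to\mathcal L^{OU}u$ and $\langle\nabla V,D\varphi_n\rangle\to\langle\nabla V,Du\rangle$ in $L^2(H,\nu)$. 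Since $\mathcal L\varphi_n=\mathcal L^{OU}\varphi_n-\langle\nabla V,D\varphi_n\rangle$ by Proposition \ref{p18}, the right-hand side converges in $L^2(H,\nu)$ to $\mathcal L^{OU}u-\langle\nabla V,Du\rangle$, and the closedness of $(\mathcal L,D(\mathcal L))$ from Proposition \ref{p16} yields $u\in D(\mathcal L)$ with $\mathcal L u=\mathcal L^{OU}u-\langle\nabla V,Du\rangle$.

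The hard part is the simultaneous polynomial-growth control of $\varphi_n$, $D\varphi_n$ and $\mathcal L^{OU}\varphi_n$. The identity $\mathcal L^{OU}\varphi_n=\lambda\varphi_n-g_n$ trades the third bound for control of $\varphi_n$ alone, so the real content is the quantitative Mehler estimate that $R(\lambda,\mathcal L^{OU})$ is bounded in the weighted norm $\|\cdot\|$ both for a function and for its gradient — precisely the quantitative form of the inclusion $R(\lambda,\mathcal L^{OU})(C^1_{b,2}(H))\subset C^1_{b,2}(H)$, which rests on the decay of $e^{tA}$ and the behaviour of $Q_t$ in the Mehler kernel — together with the somewhat delicate bookkeeping needed to keep every approximant inside $\mathcal E_A(H)$ (equivalently, cylindrical in finitely many eigen-coordinates) at each stage of the reduction.
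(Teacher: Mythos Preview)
The paper's own proof is a one-line citation to \cite[Lemma 2.2]{DR02} for the existence of the approximants $\varphi_n$ with the uniform quadratic growth bound, followed by Lebesgue's dominated convergence (justified via Remark~\ref{r0}, which makes $|x|^p e^{-V(x)}$ bounded, so that $(1+|x|^2)^2|\nabla V|^2$ becomes $\nu$-integrable) and closability of $(\mathcal L,\mathcal E_A(H))$ to obtain the ``In particular'' part. Your reconstruction of the approximation --- via approximants $g_n\in\mathcal E_A(H)$ of the preimage $g\in C^1_{b,2}(H)$ and the Ornstein--Uhlenbeck resolvent --- is the natural route and is in the spirit of what \cite{DR02} does; your treatment of the $L^2(H,\nu)$-convergence and closability part is correct.

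There is, however, one point that needs more care. You write $\varphi_n:=R(\lambda,\mathcal L^{OU})g_n\in\mathcal E_A(H)$ and then use $\mathcal L^{OU}\varphi_n=\lambda\varphi_n-g_n$ to get the uniform bound on $\mathcal L^{OU}\varphi_n$. But the exact resolvent of an element of $\mathcal E_A(H)$ is \emph{not} in $\mathcal E_A(H)$ (it is a continuous superposition of exponentials, not a finite one), so to land in $\mathcal E_A(H)$ you must pass to a Riemann-sum discretisation $\tilde\varphi_n=\sum_k w_k P_{t_k}g_n$, and then the resolvent identity no longer holds exactly. Your phrase ``vanishing graph-norm error'' suggests an $L^2$-perturbation, which would \emph{not} preserve the pointwise bound $C(1+|x|^2)$ required by the statement; a naive direct estimate of $\mathcal L^{OU}\tilde\varphi_n$ via its explicit action on exponentials brings in factors $|Ah_j^{(n)}|$ that are not controlled by the $C^2_b$-bounds of Lemma~\ref{l17} and blow up along the diagonal in $m(n)$. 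What actually rescues the argument is that, for each \emph{fixed} $n$, the map $t\mapsto e^{-\lambda t}\mathcal L^{OU}P_tg_n=e^{-\lambda t}\tfrac{d}{dt}P_tg_n$ is continuous and integrable in the weighted-sup norm $\|\cdot\|$ (since $g_n$ is a fixed finite sum of exponentials with frequencies in $D(A)$), so its Riemann sums converge in $\|\cdot\|$ to $\lambda R(\lambda,\mathcal L^{OU})g_n-g_n$. Hence by choosing the mesh fine enough (depending on $n$) one obtains $\|\mathcal L^{OU}\tilde\varphi_n\|\le 2\|\lambda R(\lambda,\mathcal L^{OU})g_n-g_n\|$, which is uniformly bounded in $n$. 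With this clarification your argument goes through.
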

\begin{pf}
Since convergence in measure comes from a metrizable topology, this
follows from \cite{DR02}, Lemma~2.2, Lebesgue's dominatd convergence
theorem, Remark~0 and the fact that $(\mathcal L,\mathcal E_A(H))$ is
closable on $L^2(H,\nu)$.
\end{pf}

Now let us recall the approximation procedure for $\partial V$, more
precisely for its sub-differential $F:=-\partial V$ with domain $D(F)$,
performed in \cite{DR02}. [We recall that
$\nabla V$ is maximal monotone (see, e.g., \cite{Ba10}), hence we can
consider its Yosida approximations.]
For $\alpha\in(0,\infty)$,
we set
\[
F_\alpha(x):=\frac{1}\alpha \bigl(J_\alpha(x)-x\bigr),\qquad
x\in H,
\]
where
\[
J_\alpha(x):=(I-\alpha F)^{-1}(x),\qquad x\in H.
\]
It is well known (see, e.g., \cite{Ba10}) that
%
\begin{eqnarray}
\label{e18} %
\lim_{\alpha\to0}F_\alpha(x)&=&F_0(x)\qquad
\forall x\in D(F),
\nonumber
\\[-8pt]
\\[-8pt]
\nonumber
 \bigl|F_\alpha(x)\bigr|&\le& F_0(x) \qquad\forall x\in D(F),
\end{eqnarray}
where
\[
F_0(x):=\inf_{y\in F(x)} |y|.
\]
[Recall that $F(x)=\partial V(x)$ is in general multi-valued unless
$x\in D_V$, when $\partial V(x)=\nabla V(x)$.] We need a further
standard regularization by setting
%
\begin{eqnarray}
\label{e19} F_{\alpha,\beta}(x):=\int_H
e^{\beta B}F_\alpha\bigl(e^{\beta B}+y\bigr)
N_{({1}/2)B^{-1}(e^{2\beta B}-1)}(dy),
\nonumber
\\[-8pt]
\\[-8pt]
\eqntext{\alpha,\beta\in(0,\infty),}
\end{eqnarray}
where $B\dvtx D(B)\subset H\to H$ is a self-adjoint negative definite
operator such that $B^{-1}$ is of trace class. Then $F_{\alpha,\beta}$
is dissipative, of class $C^\infty$ has bounded derivatives of all
orders and
%
\begin{equation}
\label{e27new} \mbox{$F_{\alpha,\beta}\to F_\alpha$ pointwise as $
\beta\to0$ }
\end{equation}
(see \cite{DZ92}, Theorem~9.19).

Now let us fix $\lambda>0$ and consider the equation for $v\in C^2_b(H)$
%
\begin{equation}
\label{e20} \lambda u-\mathcal L^{\mathrm{OU}}u- \langle F_{\alpha,\beta},Du
\rangle=v.
\end{equation}
Then by \cite{DR02}, page~268, there exists a linear map
\[
R_\lambda^{\alpha,\beta}\dvtx C^2_b(H)\to D
\bigl(\mathcal L^{\mathrm{OU}}, C^1_{b,2}(H)\bigr)\cap
C^2_b(H)
\]
[in fact given by the resolvent of the SDE corresponding to the
Kolmogorov operator on the left-hand side of \eqref{e20}] such that
$R_\lambda^{\alpha,\beta}v$ is a solution to \eqref{e20} for each $v\in
C^2_b(H)$. In particular,
%
\begin{equation}
\label{e29new} \bigl\|R_\lambda^{\alpha,\beta} v\bigr\|_\infty\le
\frac{1}\lambda \|v\|_\infty ,\qquad  \lambda>0, v\in
C^2_b(H).
\end{equation}
We also have by \cite{DR02}, (4.7), that
%
\begin{equation}
\label{e30new} \sup_{x\in H}\bigl|\nabla R_\lambda^{\alpha,\beta}v(x)\bigr|
\le\sqrt{\frac{\pi}\lambda} \|v\|_\infty,\qquad \lambda>0, v\in
C^2_b(H).
\end{equation}
Now the proof of Proposition~\ref{p26new} will be the consequence of
the following two lemmas.
%
\begin{lemma}
\label{l28new}
Let $\alpha_n\in(0,\infty), n\in\N$, such that $\lim_{n\to\infty}\alpha
_n=0$. Then there exists $\beta_n\in(0,\infty), n\in\N$, such that for
all $v\in C^2_b(H)$ we have that
\[
\lim_{n\to\infty} R_\lambda^{\alpha_n,\beta_n}v=R_\lambda
v,
\]
in $\mathcal L$-graph norm [on $L^2(H,\nu)$].
\end{lemma}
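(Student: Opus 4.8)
The plan is to turn the claimed graph-norm convergence into an $L^2(H,\nu;H)$-convergence statement for the regularized drifts, and then to exploit that $(\lambda-\mathcal L)^{-1}$ is a bounded operator on $L^2(H,\nu)$ with norm at most $\lambda^{-1}$ (by the $m$-dissipativity in Proposition \ref{p16}). Write $u_n:=R_\lambda^{\alpha_n,\beta_n}v$ and $w:=R_\lambda v$ for $v\in C^2_b(H)$. Since $u_n\in D(\mathcal L^{OU},C^1_{b,2}(H))$ by construction, Proposition \ref{p17} gives $u_n\in D(\mathcal L)$ with $\mathcal L u_n=\mathcal L^{OU}u_n-\langle\nabla V,Du_n\rangle$. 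Inserting equation \eqref{e20}, which $u_n$ solves, yields
$$
(\lambda-\mathcal L)u_n=v+\langle F_{\alpha_n,\beta_n}+\nabla V,\,Du_n\rangle,
$$
whereas $(\lambda-\mathcal L)w=v$. Consequently
$$
u_n-w=(\lambda-\mathcal L)^{-1}\bigl\langle F_{\alpha_n,\beta_n}+\nabla V,\,Du_n\bigr\rangle,\qquad
\mathcal L u_n-\mathcal L w=\lambda(u_n-w)-\bigl\langle F_{\alpha_n,\beta_n}+\nabla V,\,Du_n\bigr\rangle,
$$
and since $\|Du_n\|_\infty\le\sqrt{\pi/\lambda}\,\|v\|_\infty$ uniformly in $n$ by \eqref{e30new}, it suffices to produce a sequence $\beta_n\downarrow0$ along which $\|F_{\alpha_n,\beta_n}+\nabla V\|_{L^2(H,\nu;H)}\to0$.

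I would do this in two steps. First, the limit in $\alpha$: by the standard properties \eqref{e18} of the Yosida approximation of the maximal monotone map $\nabla V$, we have $F_\alpha(x)\to-\nabla V(x)$ as $\alpha\to0$ and $|F_\alpha(x)|\le|\nabla V(x)|$ for every $x\in D_V$; since $\nu(D_V)=\gamma(D_V)=1$ and, by assumption (H3) together with the boundedness of $e^{-V}$ (Remark \ref{r0}), $|\nabla V|\in L^2(H,\gamma)\subset L^2(H,\nu)$, Lebesgue's dominated convergence theorem gives $F_{\alpha_n}\to-\nabla V$ in $L^2(H,\nu;H)$.

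Second, for each fixed $n$ I pass to the limit in $\beta$: \eqref{e27new} gives $F_{\alpha_n,\beta}\to F_{\alpha_n}$ pointwise on $H$ as $\beta\to0$, and to upgrade this to convergence in $L^2(H,\nu;H)$ I need a dominating function for the family $\{F_{\alpha_n,\beta}:\beta>0\}$. Here the structure of \eqref{e19} enters: since $J_{\alpha_n}=(I+\alpha_n\nabla V)^{-1}$ is nonexpansive, $F_{\alpha_n}$ is $(2/\alpha_n)$-Lipschitz, so $|F_{\alpha_n}(z)|\le|F_{\alpha_n}(0)|+(2/\alpha_n)|z|$; inserting $z=e^{\beta B}x+y$ into \eqref{e19}, bounding the operator norm of $e^{\beta B}$ by $1$, and using
$$
\int_H|y|^2\,N_{\frac12B^{-1}(e^{2\beta B}-1)}(dy)=\mathrm{Tr}\bigl(\tfrac12B^{-1}(e^{2\beta B}-1)\bigr)\le\tfrac12\,\mathrm{Tr}(-B^{-1})<\infty
$$
(which holds uniformly in $\beta>0$ precisely because $B^{-1}$ is of trace class), I obtain $|F_{\alpha_n,\beta}(x)|\le c_n(1+|x|)$ with $c_n$ independent of $\beta$ and $1+|x|\in L^2(H,\nu)$. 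Dominated convergence then yields $\|F_{\alpha_n,\beta}-F_{\alpha_n}\|_{L^2(H,\nu;H)}\to0$ as $\beta\to0$, so I may choose $\beta_n\in(0,1/n)$ with $\|F_{\alpha_n,\beta_n}-F_{\alpha_n}\|_{L^2(H,\nu;H)}<1/n$. Combined with the first step this gives $F_{\alpha_n,\beta_n}\to-\nabla V$ in $L^2(H,\nu;H)$, and the two displayed identities above then force $\|u_n-w\|_{L^2(H,\nu)}\to0$ and $\|\mathcal L u_n-\mathcal L w\|_{L^2(H,\nu)}\to0$, i.e.\ convergence in the $\mathcal L$-graph norm.

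The only genuinely technical point is this last one: exhibiting a single $L^2(H,\nu)$-dominating function for the whole family of Mehler-type regularizations $F_{\alpha_n,\beta}$, which forces the use of both the linear growth of the Yosida approximations and the uniform bound $\sup_{\beta>0}\mathrm{Tr}\bigl(\tfrac12B^{-1}(e^{2\beta B}-1)\bigr)<\infty$ coming from the trace-class assumption on $B^{-1}$. Everything else is routine bookkeeping with the uniform estimates \eqref{e29new}, \eqref{e30new} and the identification of $\mathcal L u_n$ from Proposition \ref{p17}.
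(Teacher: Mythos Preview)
Your proof is correct and follows essentially the same route as the paper's: rewrite $(\lambda-\mathcal L)u_n=v+\langle F_{\alpha_n,\beta_n}+\nabla V,Du_n\rangle$, use the uniform gradient bound \eqref{e30new} together with the contractivity of $(\lambda-\mathcal L)^{-1}$, and reduce everything to showing $F_{\alpha_n,\beta_n}\to-\nabla V$ in $L^2(H,\nu;H)$ via a two-step $\alpha$-then-$\beta$ limit. Your treatment is in fact slightly more detailed than the paper's, since you justify explicitly why the linear growth bound $|F_{\alpha,\beta}(x)|\le c_\alpha(1+|x|)$ holds uniformly in $\beta$ (via the Lipschitz constant of $F_\alpha$ and the trace bound $\sup_{\beta>0}\mathrm{Tr}\bigl(\tfrac12B^{-1}(e^{2\beta B}-1)\bigr)\le\tfrac12\mathrm{Tr}(-B^{-1})$), whereas the paper simply asserts this bound.
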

\begin{pf*}{Proof \textup{(cf. the proof of \cite{DR02}, Theorem~2.3)}}
Since $D(F)\supset D_V$, so  $\nu(D(F))\ge\nu(D_V)=1$, it follows by
\eqref{e18} and Lebesgue's dominated convergence theorem that
%
\begin{equation}
\label{e31new} \lim_{n\to\infty}\int_H|F_{\alpha_n}-
\nabla V|^2\,d\nu=0.
\end{equation}
Since by the definition of $F_{\alpha,\beta}$ we have that for each
$\alpha>0$ there exists $c_\alpha\in(0,\infty)$ such that
\[
\bigl|F_{\alpha,\beta}(x)\bigr|\le c_\alpha\bigl(1+|x|\bigr)\qquad\forall x\in H,
\]
it follows by \eqref{e27new} that for $n\in\N$ there exists $\beta_n\in
(0,\tfrac{1}n)$, such that
\[
\int_H\bigl|F_{\alpha_n,\beta_n}(x)-F_{\alpha_n}(x)\bigr| \nu(dx)
\le\frac{1}n.
\]
Hence, by \eqref{e31new}
%
\begin{equation}
\label{e32new} \lim_{n\to\infty}\int_H|F_{\alpha_n,\beta_n}-
\nabla V|^2\,d\nu=0.
\end{equation}
Now let $v\in C^2_b(H)$. Then $R_\lambda^{\alpha_n,\beta_n}v\in
C^2_b(H)\cap D(\mathcal L^{\mathrm{OU}}, C^1_{b,2}(H)) $, hence by Proposition~\ref{p17} and, because $R_\lambda^{\alpha_n,\beta_n}v$ solves \eqref
{e20}, we have
%
\begin{equation}
\label{e33new} (\lambda-\mathcal L)R_\lambda^{\alpha_n,\beta_n}v=v+ \bigl
\langle F_{\alpha
_n,\beta_n}-\nabla V,\nabla R_\lambda^{\alpha_n,\beta_n}v \bigr
\rangle ,
\end{equation}
consequently,
%
\begin{equation}
\label{e34new} R_\lambda^{\alpha_n,\beta_n}v=(\lambda-\mathcal
L)^{-1}v+ (\lambda -\mathcal L)^{-1} \bigl(\bigl\langle
F_{\alpha_n,\beta_n}-\nabla V,\nabla R_\lambda^{\alpha_n,\beta_n}v \bigr\rangle
\bigr).
\end{equation}
But by \eqref{e30new} and \eqref{e32new}
\[
\lim_{n\to\infty}\int_H\bigl|\bigl\langle
F_{\alpha_n,\beta_n}-\nabla V,\nabla R_\lambda^{\alpha_n,\beta_n}v \bigr
\rangle\bigr|^2 \,d\nu=0.
\]
Hence, \eqref{e33new} and \eqref{e34new} imply the assertion, because
$(\lambda-\mathcal L)^{-1}$ is continuous on $L^2(H,\nu)$ ad $R_\lambda
v$ is a $\nu$-version of $(\lambda-\mathcal L)^{-1}v$.
\end{pf*}
%

\begin{lemma}
\label{l29new}
Let $\lambda,g$ and $w$ be as in Proposition~\ref{p26new}. Then there
exist $u_n\in C^2_b(H)\cap D(\mathcal L^{\mathrm{OU}}, C^1_{b,2}(H)), n\in\N$,
such that
\[
\sup_{n\in\N}\bigl(\|u_n\|_\infty+\|\nabla
u_n\|_\infty\bigr)\le2\sqrt\pi \max\bigl\{ \lambda^{-1},
\lambda^{-1/2}\bigr\} \|g\|_\infty,
\]
and \eqref{e24new} holds for these $u_n, n\in\N$.
\end{lemma}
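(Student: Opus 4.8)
The plan is to build the approximating sequence $u_n$ in two stages: first approximate the datum $g$ by nice functions in $C^2_b(H)$, then apply the operators $R_\lambda^{\alpha_n,\beta_n}$ from \eqref{e20} and pass to the limit using Lemma \ref{l28new}. Since by hypothesis $g\in B_b(H)\cap D(\mathcal L^{OU},C^1_{b,2}(H))$, we may pick (using Proposition \ref{p17}, or directly the defining resolvent representation $g=R(\mu,\mathcal L^{OU})h$ with $h\in C^1_{b,2}(H)$ together with Lemma \ref{l17} applied cylinder-wise) a sequence $g_k\in\mathcal E_A(H)\subset C^2_b(H)$ with $\sup_k\|g_k\|_\infty\le\|g\|_\infty$ (after a harmless truncation/normalization, so that the $L^\infty$ bound is not spoiled), $g_k\to g$ in $\nu$-measure and boundedly, and $g_k\to g$ in the $\mathcal L^{OU}$-graph norm on $L^2(H,\nu)$. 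First I would fix such a sequence.

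Next, for each fixed $k$, Lemma \ref{l28new} gives, for any null sequence $\alpha_m\to 0$, a corresponding $\beta_m$ such that $R_\lambda^{\alpha_m,\beta_m}g_k\to R_\lambda g_k$ as $m\to\infty$ in the $\mathcal L$-graph norm on $L^2(H,\nu)$; moreover each $R_\lambda^{\alpha_m,\beta_m}g_k\in C^2_b(H)\cap D(\mathcal L^{OU},C^1_{b,2}(H))$ and, by the uniform bounds \eqref{e29new} and \eqref{e30new}, $\|R_\lambda^{\alpha_m,\beta_m}g_k\|_\infty+\|\nabla R_\lambda^{\alpha_m,\beta_m}g_k\|_\infty\le 2\sqrt\pi\,\max\{\lambda^{-1},\lambda^{-1/2}\}\,\|g_k\|_\infty\le 2\sqrt\pi\,\max\{\lambda^{-1},\lambda^{-1/2}\}\,\|g\|_\infty$. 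On the other hand, since $(\lambda-\mathcal L)^{-1}$ is a bounded operator on $L^2(H,\nu)$ and $w=R_\lambda g$ is a $\nu$-version of $(\lambda-\mathcal L)^{-1}g$, the convergence $g_k\to g$ in $L^2(H,\nu)$ forces $R_\lambda g_k\to w$ in the $\mathcal L$-graph norm; combining this with the identity $\mathcal L R_\lambda g_k=\mathcal L^{OU}R_\lambda g_k-\langle\nabla V,\nabla R_\lambda g_k\rangle$ from Proposition \ref{p17} and controlling the drift term via \eqref{e30new}-type bounds together with $\nabla V\in L^2(H,\nu;H)$ (Hypothesis (H3)), one upgrades this to convergence of $\mathcal L^{OU}R_\lambda g_k$, $\nabla R_\lambda g_k$ and $R_\lambda g_k$ in the relevant $L^2(H,\nu)$-norms, i.e. the convergence appearing in \eqref{e24new} with $w-u_n$ replaced by $w-R_\lambda g_k$. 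Then I would carry out a standard diagonal argument: choose $k=k(n)\to\infty$ and, along the sequence supplied by Lemma \ref{l28new}, an index $m=m(n)\to\infty$, so that $u_n:=R_\lambda^{\alpha_{m(n)},\beta_{m(n)}}g_{k(n)}$ simultaneously satisfies the uniform bound and converges to $w$ in the quantity displayed in \eqref{e24new}. Finally, each such $u_n$ lies in $C^2_b(H)\cap D(\mathcal L^{OU},C^1_{b,2}(H))$, as required.

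The main obstacle is the combined passage to the limit in the \emph{second} line of \eqref{e24new}: one needs joint control of $\mathcal L^{OU}(w-u_n)$, $\nabla(w-u_n)$ and $(w-u_n)$ in $L^2(H,\nu)$, and the delicate point is to show that the Yosida/regularization error term $\langle F_{\alpha_n,\beta_n}-\nabla V,\nabla R_\lambda^{\alpha_n,\beta_n}v\rangle$ in \eqref{e33new} goes to zero in $L^2(H,\nu)$ \emph{while also} $g_{k(n)}\to g$ — i.e. keeping the two approximation parameters compatible. This is handled exactly as in the proof of Lemma \ref{l28new} (relying on \eqref{e32new}, on the uniform gradient bound \eqref{e30new} which is independent of $\alpha,\beta$, and on $\nabla V\in L^2(H,\nu;H)$), so that for each fixed $k$ the inner limit is clean and the outer diagonalization over $k$ then only uses the already-established convergence $R_\lambda g_k\to w$. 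One should also be slightly careful that the $L^\infty$-normalization of the $g_k$ does not interfere with the graph-norm convergence; a simple truncation at level $\|g\|_\infty$ composed with a smooth cutoff, or working with $g_k$ of the form $\chi(g_k^0)$ for a $1$-Lipschitz $\chi$ fixing $[-\|g\|_\infty,\|g\|_\infty]$, takes care of this.
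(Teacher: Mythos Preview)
Your two-stage plan (approximate $g$ by smooth data, apply $R_\lambda^{\alpha,\beta}$, diagonalize via Lemma~\ref{l28new}) is correct and is exactly the paper's strategy, but you overcomplicate the first stage. The extra hypothesis $g\in D(\mathcal L^{OU},C^1_{b,2}(H))$ is \emph{not used at all} in the proof of Lemma~\ref{l29new}; it enters only afterwards, in the passage from Lemma~\ref{l29new} to Proposition~\ref{p26new} (where the $u_n$ are further replaced by $u_n\circ P_n\in\mathcal FC^2_b$). Consequently there is no need to invoke Proposition~\ref{p17}, no need for $g_k\to g$ in any $\mathcal L^{OU}$-graph norm, and no need for delicate truncations. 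The paper simply uses that $C^2_b(H)$ is dense in $L^2(H,\nu)$ to pick $v_k\in C^2_b(H)$ with $\sup_k\|v_k\|_\infty\le 2\|g\|_\infty$ and $v_k\to g$ in $L^2(H,\nu)$; allowing the harmless factor $2$ in the $L^\infty$ bound removes the truncation issue you flag at the end. Then $R_\lambda v_k\to R_\lambda g=w$ in $\mathcal L$-graph norm by boundedness of $(\lambda-\mathcal L)^{-1}$, and a diagonal choice along the sequence from Lemma~\ref{l28new} gives $u_n:=R_\lambda^{\alpha_{n_k},\beta_{n_k}}v_k\to w$ in $\mathcal L$-graph norm, hence also in $W^{1,2}(H,\nu)$. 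The remaining step you correctly identify as the ``obstacle''---upgrading to convergence of $\mathcal L^{OU}(w-u_n)$ in $L^2(H,\nu)$---is handled precisely as you outline: write $\mathcal L^{OU}u_n=\mathcal Lu_n+\langle\nabla V,\nabla u_n\rangle$, use the $\mathcal L$-graph norm convergence for the first term, and for the second use the uniform bound \eqref{e30new} on $\nabla u_n$ together with $\nabla V\in L^2(H,\nu;H)$ and dominated convergence (possibly along a subsequence). So your argument is sound; it just carries superfluous baggage that the paper's version avoids.
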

\begin{pf}
Since $C^2_b(H)\subset L^2(H,\nu)$ densely, we can find $v_k\in
C^2_b(H), k\in\N$, such that
\[
\sup_{k\in\N}\|v_k\|_\infty\le2\|g
\|_\infty
\]
and
\[
\lim_{k\to\infty}\int_H|g-v_k|^2
\,d\nu=0,
\]
hence by the continuity of $(\lambda-\mathcal L)^{-1}$,
\[
\lim_{k\to\infty}\int_H\bigl|R_\lambda(g-v_k)\bigr|^2
\,d\nu=0.
\]
Therefore, $R_\lambda v_k\to R_\lambda g$ in $\mathcal L$-graph norm
[on $L^2(H,\nu)$] as $k\to\infty$. Hence, by Lemma~\ref{l28new} we can
choose a subsequence $(k_n)_{n\in\N}$ such that
\[
R_\lambda^{\alpha_{n_k},\beta_{n_k}}v_k\to R_\lambda g\qquad
\mbox{in }\mathcal L\mbox{-graph norm} \bigl(\mbox{on }L^2(H,\nu)\bigr)
\mbox{ as }k\to
\infty.
\]
Taking $u_k:=R_\lambda^{\alpha_{n_k},\beta_{n_k}}v_k, k\in\N$, the
assertion follows from \eqref{e29new} and \eqref{e30new}, recalling
that convergence in $\mathcal L$-graph norm implies convergence in
$W^{1,2}(H,\nu)$.
\end{pf}

\begin{pf*}{Proof of Proposition~\ref{p26new}} Let $u\in
C^2_b(H)\cap D(\mathcal L^{\mathrm{OU}},C^1_{b,2}(H))$ and define $u_n:=u\circ
P_n\in\mathcal FC^2_b, n\in\N$. Then $\|u_n\|_\infty\le\|u\|_\infty$
and $\|\nabla u_n\|_\infty\le\|\nabla u\|_\infty$. Furthermore,
$u_n\to u$, $\nabla u_n\to\nabla u$ and $\mathcal L^{\mathrm{OU}}u_n\to
\mathcal L^{\mathrm{OU}} u$ pointwise on $H$ as \mbox{$n\to\infty$}. Furthermore,
$\mathcal L^{\mathrm{OU}} u_n\to\mathcal L^{\mathrm{OU}} u$ in $L^2(H,\gamma)$, hence in
$L^2(H,\nu)$ as $n\to\infty$. Now the assertion follows by Lemma~\ref
{l29new}.
\end{pf*}

%
\begin{corollary}
\label{c30new}
Let $f\in B_b(H),
\lambda\ge4\pi\|B\|^2_\infty$ and $u$ as in Proposition~\ref{p23},
that is,
\[
u:=R_\lambda\bigl((I-T_\lambda)^{-1}f\bigr).
\]
Let $u_n\in\mathcal F C^2_b\cap D(\mathcal L^{\mathrm{OU}},C^1_{b,2}(H)), n\in
\N$, be as in Proposition~\ref{p26new} with $g:=(I-T_\lambda)^{-1}f$
[$\in B_b(H)$, with $\|g\|_\infty\le2\|f\|_\infty$ by the proof of
Proposition~\ref{p23}].
Consider the Markov process
\[
\mathbf{M}:=\bigl(\Omega,\mathcal F, (\mathcal F_t)_{t\ge
0},(X_t)_{t\ge0},
(P_z)_{z\in S_V}\bigr)
\]
from Theorem~\ref{t2.6}, with $S_V$ defined in Proposition~\ref{p2.9}.
Then there exists an $\mathcal E_\nu$-nest $(F_k^{\lambda,f})_{k\in\N
}$ of compacts such that for every $k\in\N$, $F_k^{\lambda,f}\subset
S_V$ and some subsequence $n_l\to\infty$:
\begin{longlist}[(iii)]
\item[(i)]$u_{n_l}(z)\to u(z)$,

\item[(ii)] $\E_{P_z}\int_0^\infty e^{-\lambda s}|\nabla u-\nabla
u_{n_l}|^2(X_s)\,ds=R_\lambda(|\nabla u-\nabla u_{n_l}|^2)(z)\to0$,

\item[(iii)] $\E_{P_z}\int_0^\infty e^{-s}|\mathcal
L(u-u_{n_l})(X_s)|\,ds\to0$,

uniformly in $z\in F_k^{\lambda,f}$. In particular, for all
$z\in\bigcup_{k=1}^\infty F_k^{\lambda,f}\setminus N$ with an $\mathcal
E_\nu$-exceptional set $N$, we have that $P_z$-a.e. the following It\^
o formula holds:
%
\begin{equation}
\label{e35new} u(X_t)-z-\int_0^t
\mathcal L u(X_s)\,ds=\int_0^t
\bigl\langle \nabla u(X_s),dW(s)\bigr\rangle\qquad \forall t\ge0.
\end{equation}
\end{longlist}
\end{corollary}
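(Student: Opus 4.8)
The plan is to upgrade the $L^2(H,\nu)$-convergences coming from Proposition \ref{p26new} to quasi-uniform convergences along a single $\mathcal E_\nu$-nest of compact subsets of $S_V$, and then to pass to the limit in the elementary It\^o formula valid for the cylindrical approximants $u_n\in\mathcal FC^2_b$. Write $u=R_\lambda g$ with $g=(I-T_\lambda)^{-1}f$. Proposition \ref{p26new} (together with Proposition \ref{p23} and Lemma \ref{l21}) gives $u_n\to u$ in $L^2(H,\nu)$, $\nabla u_n\to\nabla u$ in $L^2(H,\nu;H)$, $\mathcal Lu_n\to\mathcal Lu$ in $L^2(H,\nu)$, and $C:=\sup_n(\|u_n\|_\infty+\|\nabla u_n\|_\infty)<\infty$; moreover $u$ is Lipschitz, hence continuous and equal to its own $\mathcal E_\nu$-quasi-continuous version, with $\nabla u=Du$ $\nu$-a.e.

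First I would obtain the convergences in the form domain. Put $h_n:=|\nabla u_n-\nabla u|^2$ and $\ell_n:=|\mathcal Lu_n-\mathcal Lu|$; since $h_n\le 2C\,|\nabla u_n-\nabla u|$ $\nu$-a.e., both $\|h_n\|_{L^2(H,\nu)}$ and $\|\ell_n\|_{L^2(H,\nu)}$ tend to $0$. Since $(\mathcal L,D(\mathcal L))$ is the generator of the symmetric form $\mathcal E_\nu$ (integration by parts, cf.\ \eqref{e18'}), for $\alpha>0$ and $\phi\in L^2(H,\nu)$ one has $R_\alpha\phi=(\alpha-\mathcal L)^{-1}\phi\in D(\mathcal L)\subset D(\mathcal E_\nu)$ with $\mathcal E_{\nu,1}(R_\alpha\phi,R_\alpha\phi)\le c_\alpha\|\phi\|_{L^2(H,\nu)}^2$ (using $\|R_\alpha\phi\|_{L^2(H,\nu)}\le\alpha^{-1}\|\phi\|_{L^2(H,\nu)}$). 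Hence $R_\lambda h_n\to0$ and $R_1\ell_n\to0$ in $W^{1,2}(H,\nu)=D(\mathcal E_\nu)$, while $u_n-u\to0$ in $D(\mathcal E_\nu)$ directly. Passing to a subsequence $(n_l)$ along which the three $\mathcal E_{\nu,1}$-norms are summable, the quasi-uniform convergence theory for Dirichlet forms (\cite[Ch.\ III]{MR92}), applied to $|u_{n_l}-u|+R_\lambda h_{n_l}+R_1\ell_{n_l}$, supplies an $\mathcal E_\nu$-nest on which the quasi-continuous versions of $u_{n_l}$, $R_\lambda h_{n_l}$, $R_1\ell_{n_l}$ converge uniformly to those of $u$, $0$, $0$. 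Intersecting this nest with the nest of compacts $(K_k)_k$ of Theorem \ref{t2.5} and with an $\mathcal E_\nu$-nest contained in $S_V$ (which exists since $H\setminus S_V$ is $\mathcal E_\nu$-exceptional, Proposition \ref{p2.9}(ii)), and using that finite intersections of nests are nests (\cite[Ch.\ III, Thm.\ 2.11]{MR92}), I get the required nest $(F_k^{\lambda,f})_k$ of compacts with $F_k^{\lambda,f}\subset S_V$ for every $k$.

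Since $\mbox{\boldmath $M$}$ is the Markov process properly associated with $(\mathcal E_\nu,D(\mathcal E_\nu))$ and $R_\alpha=(\alpha-\mathcal L)^{-1}$, the analytic resolvent kernels agree $\mathcal E_\nu$-quasi-everywhere with the probabilistic resolvents $z\mapsto\E_{P_z}\int_0^\infty e^{-\alpha s}(\cdot)(X_s)\,ds$; enlarging $N$ by the corresponding exceptional set, assertions (i), (ii), (iii) are precisely the uniform convergences on $F_k^{\lambda,f}$ established above (for (i) note $u_{n_l}$ and $u$ are continuous, so their quasi-continuous versions are themselves). For the It\^o formula, fix $z\in F_k^{\lambda,f}\subset S_V$ and let $(X_t)$ be the process of Theorem \ref{t2.6}. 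As $u_n=\phi_n(\langle\cdot,e_1\rangle,\dots,\langle\cdot,e_{m_n}\rangle)\in\mathcal FC^2_b$, the finite-dimensional It\^o formula applied to $\phi_n$ along the coordinate semimartingales of Theorem \ref{t2.6}(iii), using $e_k\in D(A)$, self-adjointness of $A$, $DV=\nabla V$ (Proposition \ref{p2.1}) and $X_s\in D_V$ for a.e.\ $s$, yields $P_z$-a.s., for all $t\ge0$,
\[
u_n(X_t)=u_n(z)+\int_0^t\mathcal Lu_n(X_s)\,ds+\int_0^t\langle\nabla u_n(X_s),dW_s\rangle.
\]
Along $(n_l)$: the initial value converges by (i); $\sup_{t\le T}|\int_0^t(\mathcal Lu_{n_l}-\mathcal Lu)(X_s)\,ds|\le e^{T}\int_0^\infty e^{-s}|\mathcal Lu_{n_l}-\mathcal Lu|(X_s)\,ds$, whose $P_z$-expectation tends to $0$ uniformly in $z\in F_k^{\lambda,f}$ by (iii); and by Doob's $L^2$-inequality $\E_{P_z}\sup_{t\le T}|\int_0^t\langle\nabla(u_{n_l}-u)(X_s),dW_s\rangle|^2\le 4e^{\lambda T}R_\lambda(h_{n_l})(z)\to0$ uniformly in $z\in F_k^{\lambda,f}$ by (ii). Using the summability of these uniform bounds (legitimate by a diagonal argument, since $(F_k^{\lambda,f})_k$ is increasing) I pass to a further subsequence valid for all $k$ and all $T\in\N$, absorb the countably many $P_z$-null events into $N$, and obtain \eqref{e35new} for all $z\in\bigcup_k F_k^{\lambda,f}\setminus N$ and all $t\ge0$ (both sides being continuous in $t$).

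The main obstacle I anticipate is the potential-theoretic bookkeeping: producing one $\mathcal E_\nu$-nest of compacts inside $S_V$ that simultaneously carries the quasi-uniform convergence of all three sequences and on which the analytic resolvents coincide with the probabilistic ones, together with extracting, uniformly in $z$ over each $F_k^{\lambda,f}$, an almost-surely convergent subsequence in the limiting It\^o formula. The analytic inputs — that the $\lambda$-potentials lie in $D(\mathcal E_\nu)$ with the stated norm control, and the Doob estimate — are routine.
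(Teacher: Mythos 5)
Your proposal is correct and follows essentially the same route as the paper: you upgrade the $W^{1,2}(H,\nu)$-convergence of $u_n-u$ and of the potentials $R_\lambda(|\nabla u_n-\nabla u|^2)$, $R_1(|\mathcal L(u_n-u)|)$ to quasi-uniform convergence along a subsequence and an $\mathcal E_\nu$-nest of compacts (via \cite[Ch.~III, Prop.~3.5]{MR92} and Theorem \ref{t2.5}), apply the finite-dimensional It\^o formula to the cylindrical approximants along the coordinate semimartingales of Theorem \ref{t2.6}(iii), and pass to the limit using (i)--(iii). Your treatment is in places more explicit than the paper's (the form-norm bound on the potentials, Doob's inequality and the diagonal extraction to get \eqref{e35new} for all $t$ simultaneously), but these are refinements of the same argument rather than a different one.
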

\begin{pf}
Since the convergence of all three sequences in (i)--(iii) takes place
in $W^{1,2}(H,\nu)$, the existence of such an $\mathcal E_\nu$-nest and
subsequence $(n_l)_{l\in\N}$ follows from \cite{MR92}, Chapter III,
Proposition~3.5, and Theorem~\ref{t2.5} above. By Theorem~\ref
{t2.7} for $z\in\bigcup_{k=1}^\infty F_k^{\lambda,f}\setminus N$, for
some $\mathcal E_\nu$-exceptional set $N$ we know that $P_z[\bigcup_{k=1}^\infty\{\tau_{H\setminus F_k^{\lambda,f}}>t\}]=1$ for all $t\ge
0$. So, fix $z\in\bigcup_{k=1}^\infty F_k^{\lambda,f}\setminus N$. Then
by the classical It\^o formula on finite dimensional Euclidean space
and by Theorem~\ref{t2.6}(iii), we have $P_z$-a.s.
%
\begin{eqnarray}
\label{e36new} %
&& u_{n_l}(X_t)-z-
\int_0^t\bigl(\mathcal L^{\mathrm{OU}}
u_{n_l}-\langle\nabla V,\nabla u_{n_l} \rangle\bigr)
(X_s)\,ds
\nonumber
\\[-8pt]
\\[-8pt]
\nonumber
&&\qquad=\int_0^t \bigl\langle \nabla
u_{n_l}(X_s),dW(s)\bigr\rangle \qquad\forall t\ge0.
\end{eqnarray}
Fix $t>0$. Then on $ \{\tau_{H\setminus F_k^{\lambda,f}}>t\}$ we have
by (ii) above that $u_{n_l}(X_t)\to u(X_t)$ as $n\to\infty$ and by the
last part of Proposition~\ref{p18} and (iii) above
\begin{eqnarray*}
&&E_{P_z}\int_0^t\bigl|
\bigl(\mathcal Lu-\bigl(\mathcal L^{\mathrm{OU}} u_{n_l}-\langle \nabla
V,\nabla u_{n_l} \rangle\bigr) (X_s)\bigr)\bigr|\,ds
\nonumber
\\[-8pt]
\\[-8pt]
\nonumber
&&\qquad\le e^{ t} \E_{P_z}\int_0^\infty
e^{-s}\bigl|\mathcal L(u-u_{n_l}) (X_s)\bigr|\,ds\to0\qquad
\mbox{as } l\to\infty,
\end{eqnarray*}
and also that by It\^o's isometry and by (ii) above
\begin{eqnarray*}
&&\E_{P_z}\biggl\llvert \int
_0^t\bigl\langle\nabla u(X_s)-
\nabla u_{n_l}(X_s), dW_s \bigr\rangle \biggr
\rrvert ^2
\\
&&\qquad\le\E_{P_z}\int_0^t\bigl | \nabla
u(X_s)-\nabla u_{n_l}(X_s)\bigr|^2\,ds
\\
&&\qquad\le e^{\lambda t}\int_0^\infty
e^{-\lambda s} \E_{P_z} \bigl( \bigl| \nabla u(X_s)-\nabla
u_{n_l}(X_s)\bigr|^2 \bigr) \,ds
\\
&&\qquad=e^{\lambda t}R_\lambda\bigl(| \nabla u -\nabla u_{n_l}
|^2\bigr) (z)\to0\qquad \mbox{as } l\to\infty.
\end{eqnarray*}
Hence, on $\bigcup_{k=1}^\infty\{\tau_{H\setminus F_k^{\lambda,f}}>t\}$
we can pass to the limit in \eqref{e36new} to get \eqref{e35new}.
\end{pf}
%
\begin{remark}
\label{r31new}
By the same standard procedure already mentioned at the end of the
proof of Proposition~\ref{p2.9}, we can find $S^{\lambda,f}_V$ such
that $H\setminus S_V^{\lambda,f}$ is $\mathcal E_\nu$-exceptional and
Theorem~\ref{t2.6},
Proposition~\ref{p2.9}, Theorem~\ref{t2.10} hold with $S^{\lambda,f}_V$
replacing $S_V$ and for all $z\in S^{\lambda,f}_V$, (i)--(iii) in
Corollary~\ref{c30new} hold and \eqref{e35new} holds $P_z$-a.s.
\end{remark}

\subsection{Maximal regularity estimates}\label{sec3.3}

Let us first consider again the solution $u$ of the scalar equation
(\ref{e15}). The following
result is the main technical ingredient of this paper, on the Kolmogorov
equation; see \cite{DL14}, Proposition~4.2.

\begin{lemma}
\label{l32'}
We have that $u\in W^{2,2}(H,\nu)$ and there is a constant $C>0$ such
that, for all $\lambda\geq1$,
%
\begin{eqnarray}
\label{e36'} \int_{H}\bigl\llvert Du ( x ) \bigr\rrvert
^{2}\nu ( dx ) &\leq&\frac{C}{\lambda}\int_{H}
\bigl\llvert f ( x ) \bigr\rrvert ^{2}%
\nu ( dx ),
\\
\label{e36''} \int_{H}\bigl\llVert D^{2}u ( x
) \bigr\rrVert _{HS}^{2}\nu ( dx )& \leq & C\int
_{H}\bigl\llvert f ( x ) \bigr\rrvert ^{2}%
\nu ( dx ).
\end{eqnarray}
\end{lemma}

We then apply this result componentwise to equation (\ref{PDE vector}).

\begin{theorem}
\label{t33'}Let $U ( x ) =\sum_{i=1}^{\infty}%
u^{i} ( x ) e_{i}$ be the solution of equation (\ref{PDE vector})
with $F ( x ) =\sum_{i=1}^{\infty}f^{i} ( x ) e_{i}$,
namely $u=u^{i}$ satisfies equation (\ref{eq for U^i}) with $f=f^{i}$, for
every $i\in\mathbb{N}$. Then
\[
\int_{H}\sum_{i=1}^{\infty}
\bigl( \lambda_{i}\bigl\llvert Du^{i} ( x ) \bigr\rrvert
^{2}+\bigl\llVert D^{2}u^{i} ( x ) \bigr\rrVert
_{HS}^{2} \bigr) \nu ( dx ) \leq C\int_{H}
\bigl(\bigl\llvert F ( x ) \bigr\rrvert ^{2}+\bigl|B(x)\bigr|^2\bigr)
\nu ( dx ).
\]
\end{theorem}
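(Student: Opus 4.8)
The plan is to deduce the vector estimate directly from the scalar maximal regularity of Lemma \ref{l32'}, applied to each component equation (\ref{eq for U^i}) with the shifted spectral parameter $\lambda+\lambda_i$, and then to sum over $i$, exploiting the extra weight $\lambda_i$ against the gain coming from the shift.

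Concretely, for each $i\in\mathbb N$ the component equation
\[
(\lambda+\lambda_i)u^i-\mathcal Lu^i-\langle B(x),Du^i\rangle=f^i
\]
is an instance of the scalar equation (\ref{e15}) with $\lambda$ replaced by $\lambda+\lambda_i$ and datum $f^i\in B_b(H)$. Since $\lambda_i>0$, we have $\lambda+\lambda_i\ge1$ whenever $\lambda\ge1$, so Lemma \ref{l32'} applies to $u^i$ and gives, with the $\lambda$-independent constant $C$ of that lemma,
\[
\int_H|Du^i|^2\,d\nu\le\frac{C}{\lambda+\lambda_i}\int_H|f^i|^2\,d\nu,
\qquad
\int_H\|D^2u^i\|_{HS}^2\,d\nu\le C\int_H|f^i|^2\,d\nu .
\]
Multiplying the first inequality by $\lambda_i$ and using $\lambda_i/(\lambda+\lambda_i)\le1$ yields $\lambda_i\int_H|Du^i|^2\,d\nu\le C\int_H|f^i|^2\,d\nu$; this trivial cancellation is exactly what makes the $\lambda_i$-weighted gradient term summable. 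Adding this to the Hessian bound and summing over $i$, with the monotone convergence theorem justifying the interchange of $\sum_i$ with $\int_H$ (all integrands being nonnegative), gives
\[
\int_H\sum_{i=1}^\infty\bigl(\lambda_i|Du^i|^2+\|D^2u^i\|_{HS}^2\bigr)\,d\nu
\le2C\sum_{i=1}^\infty\int_H|f^i|^2\,d\nu
=2C\int_H|F(x)|^2\,\nu(dx)\le2C\int_H\bigl(|F|^2+|B|^2\bigr)\,d\nu ,
\]
where $\sum_i|f^i(x)|^2=|F(x)|^2$ and the last two interchanges are again monotone convergence. Renaming $2C$ as $C$ gives the statement.

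There is no genuine obstacle here: all the analysis is contained in Lemma \ref{l32'} (taken from \cite{DL14}), and what remains is bookkeeping. The only points requiring attention are that the constant in Lemma \ref{l32'} is independent of the spectral parameter (so the shift $\lambda\mapsto\lambda+\lambda_i$ costs nothing), that $\lambda+\lambda_i\ge1$ for every $i$, and the elementary bound $\lambda_i/(\lambda+\lambda_i)\le1$. If one wishes to invoke Lemma \ref{l32'} only for the $B$-free operator $\lambda-\mathcal L$, one extra step is needed: rewrite (\ref{eq for U^i}) as $(\lambda+\lambda_i)u^i-\mathcal Lu^i=f^i+\langle B,Du^i\rangle$ (the right-hand side lies in $L^2(H,\nu)$, since $Du^i$ is bounded by Proposition \ref{p23} and $B$ is bounded), apply the $B$-free estimates to this datum, and absorb the resulting term $\tfrac{2C\|B\|_\infty^2}{\lambda+\lambda_i}\int_H|Du^i|^2\,d\nu$ into the left-hand side for $\lambda$ large; the additive $|B|^2$ on the right-hand side of the statement is a convenient over-estimate that keeps the bound robust under this manipulation.
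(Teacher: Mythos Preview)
Your proof is correct and follows essentially the same route as the paper: apply the scalar maximal regularity bounds of Lemma \ref{l32'} to each component equation (\ref{eq for U^i}) with the shifted parameter $\lambda+\lambda_i$, use $\lambda_i/(\lambda+\lambda_i)\le 1$, and sum. The only cosmetic difference is that the paper carries along an extra $|\langle B,e_i\rangle|^2$ on the right-hand side of each scalar estimate (so that the $|B|^2$ term appears naturally after summing), whereas you add $|B|^2$ at the end as a harmless over-estimate; your closing paragraph about moving $\langle B,Du^i\rangle$ to the right-hand side in fact clarifies where that extra term comes from.
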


\begin{pf}
We apply the lemma and get
\begin{eqnarray*}\int_{H}\bigl\llvert
Du^{i} ( x ) \bigr\rrvert ^{2}\nu ( dx ) &\leq&
\frac{C}{\lambda+\lambda_{i}}\int_{H}\bigl(\bigl\llvert
f^{i} ( x ) \bigr\rrvert ^{2}+\bigl|\bigl\langle
B(x),e_i \bigr\rangle\bigr|^2\bigr)\nu ( dx )
\\
& \leq&\frac{C}{\lambda_{i}}%
\int_{H}\bigl(\bigl
\llvert f^{i} ( x ) \bigr\rrvert ^{2}+\bigl|\bigl\langle
B(x),e_i \bigr\rangle\bigr|^2\bigr)\nu ( dx ),\\
\int_{H}\bigl\llVert D^{2}u^{i} ( x
) \bigr\rrVert _{HS}^{2}\nu ( dx ) &\leq & C\int
_{H}\bigl(\bigl\llvert f^{i} ( x ) \bigr\rrvert
^{2}+\bigl|\bigl\langle B(x),e_i \bigr\rangle\bigr|^2
\bigr) \nu ( dx ).
\end{eqnarray*}
Therefore,
\begin{eqnarray*}
&& \int_{H}\sum_{i=1}^{\infty}
\bigl( \lambda_{i}\bigl\llvert Du^{i} ( x ) \bigr\rrvert
^{2}+\bigl\llVert D^{2}u^{i} ( x ) \bigr\rrVert
_{HS}^{2} \bigr) \nu ( dx )
\\
&&\qquad \leq2C\int_{H}\bigl(\bigl\llvert F ( x ) \bigr\rrvert
^{2}+\bigl|B(x)\bigr|^2\bigr)\nu ( dx ) <\infty.
\end{eqnarray*}
The proof is complete.
\end{pf}
%
\begin{remark}
\label{r33'}
Consider the situation of Lemma~\ref{l32'} and let $(u_l)_{l\in\N}$
be the sequence $(u_{n_l})_{l\in\N}$ from Corollary~\ref{c30new}. Then
it follows by Proposition~\ref{p26new} and Corollary~\ref{c30new} that
as $n\to\infty$
\[
f_n:=(\lambda-\mathcal L)u_n+\langle B,Du_n
\rangle\to f\qquad \mbox{in } L^2(H,\nu).
\]
Hence, by \eqref{e36''},
\[
\lim_{n\to\infty}\int_H\bigl\|D^2(u-u_n)
\bigr\|^2_{HS} \,d\nu=0.
\]
This will be crucially used to justify the application of mean value
theorem in the proof of Lemma~\ref{l38} below.
\end{remark}

\section{New formulation of the SDE}\label{sec4}
In this section, we fix $U$, $u^i$ as in Theorem~\ref{t33'} with
$f^i:=\langle B,e_i \rangle$ and $F:=B$. Let $\lambda\ge4\pi\|B\|
^2_\infty$ so large that $c(\lambda)\le\frac{1}2 \|B\|^{-1}_\infty$
where $c(\lambda)$ is as in Lemma~\ref{lemma uniform est}. Again, we write $x^i$ for
$\langle x,e_i\rangle$, $u^i(x)$ for $\langle U(x),e_i\rangle$, and so
on. Below we shall apply Corollary~\ref{c30new} with $f$ replaced by
$B^i$ and $u^i$ replacing $u$, for $i\in\N$.
%
\begin{remark}
\label{r34'}
As the corresponding sets of allowed starting points
$S_V^{B^i,\lambda}, i\in\N$, are concerned, as in Remark~\ref{r31new},
by a standard diagonal procedure we can find $S_V\subset\bigcap_{i\in\N
}S_V^{B^i,\lambda}$ such that $H\setminus S_V$ is $\mathcal E_\nu
$-exceptional and Theorem~\ref{t2.6}, Proposition~\ref{p2.9}, Theorem~\ref{t2.10} hold with this (smaller) $S_V$ and for all $z\in S_V$
(i)--(iii) in Corollary~\ref{c30new} hold and \eqref{e35new} holds
$P_z$-a.s.

Below we fix this set $S_V(\subset H)$.
\end{remark}

\begin{lemma}
\label{l36}Let $z\in S_V$ and set
\[
\varphi ( x ) =x+U ( x ),\qquad  x\in H,
\]
namely $\varphi^{i} ( x ) =x^{i}+u^{i} ( x ) $ and let
$X$ be a solution of the SDE (\ref{SDE}). Then for each $i\in\N$
%
\begin{eqnarray}
\label{e37'} d\varphi^{i} ( X_{t} ) &= &\bigl( -\lambda
_{i}X_{t}^{i}-D_{i}V (
X_{t} ) \bigr) \,dt+ ( \lambda+\lambda_{i} ) u^{i}
( X_{t} ) \,dt
\nonumber
\\[-8pt]
\\[-8pt]
\nonumber
&&{}+ \bigl\langle Du^{i} ( X_{t} )
,dW_{t} \bigr\rangle +dW_{t}^{i}.
\end{eqnarray}
\end{lemma}
\begin{pf}
Fix $i\in\N$. Let us first prove the following.

\textit{Claim}: We have $P_z$-a.e.
%
\begin{eqnarray}
\label{e38'} %
 u^i(X_t)&=&
 u^i(z)+\int_0^t\bigl(\mathcal
L^{\mathrm{OU}} u^i(X_s)-\bigl\langle\nabla
V(X_s)-B(X_s), D u(X_s) \bigr\rangle\bigr)
\,ds
\nonumber
\\[-8pt]
\\[-8pt]
\nonumber
&&{}+\int_0^t\bigl\langle D
u^i(X_s),dW_s, \bigr\rangle \,ds, \qquad t\ge0.
\end{eqnarray}
Indeed, considering the set $\Omega_0$ of all $\omega\in\Omega$ such
that \eqref{e38'} holds, we have to prove that $P(\Omega_0)=1$. But by
Girsanov's theorem this is equivalent to \eqref{e35new} with $u^i$
replacing $u$. Hence, the claim is proved.

As a consequence, we obtain that
\begin{eqnarray*}
du^{i} ( X_{t} ) & =&\mathcal{L}u^{i} (
X_{t} ) \,dt+B(X_t)\,dt+ \bigl\langle Du^{i} (
X_{t} ) ,dW_{t} \bigr\rangle
\\
& =&\mathcal{-}B^{i} ( X_{t} ) \,dt+ ( \lambda+\lambda
_{i} ) u^{i} ( X_{t} ) \,dt+ \bigl\langle
Du^{i} ( X_{t} ) ,dW_{t} \bigr\rangle
\end{eqnarray*}
and thus
\begin{eqnarray*} dX_{t}^{i}&= &\bigl( -
\lambda_{i}X_{t}^{i}-D_{i}V (
X_{t} ) \bigr) \,dt-du^{i} ( X_{t} )
\\
&&{}+ ( \lambda+\lambda_{i} ) u^{i} ( X_{t} ) \,dt+
\bigl\langle Du^{i} ( X_{t} ) ,dW_{t} \bigr
\rangle+dW_{t}^{i}.
\end{eqnarray*}
Then
\begin{eqnarray*}
d\varphi^{i} ( X_{t} ) &= &\bigl( -\lambda
_{i}X_{t}^{i}-D_{i}V (
X_{t} ) \bigr) \,dt+ ( \lambda+\lambda_{i} ) u^{i}
( X_{t} ) \,dt\\
&&{}+ \bigl\langle Du^{i} ( X_{t} )
,dW_{t} \bigr\rangle +dW_{t}^{i}.
\end{eqnarray*}
\upqed\end{pf}

In vector form, we could write \eqref{e37'} as
\[
dX_{t}= \bigl( AX_{t}-\nabla V ( X_{t} ) \bigr)
\,dt-dU ( X_{t} ) + ( \lambda-A ) U ( X_{t} ) \,dt+DU (
X_{t} ) \,dW_{t}+dW_{t}%
\]
and
\[
d\varphi ( X_{t} ) = \bigl( AX_{t}-\nabla V (
X_{t} ) \bigr) \,dt+ ( \lambda-A ) U ( X_{t} ) \,dt+DU (
X_{t} ) \,dW_{t}+dW_{t}.
\]

\section{Proof of Theorem \texorpdfstring{\protect\ref{t4}}{1.5}}\label{sect proof uniqueness}

Consider the situation described at the beginning of Section~\ref{sec4} with
$S_V$ being the set of all allowed starting points from Remark~\ref{r34'}.
In particular, by our choice of $\lambda$ we have
\[
\sup_{x\in H}\bigl\llVert \nabla U ( x ) \bigr\rrVert
_{\mathcal
{L} (
H ) }\leq\tfrac{1}{2}.
\]

\begin{lemma}
\label{l37}For every $x,y\in H$, we have
\[
\tfrac{1}2 \llvert x-y\rrvert \leq\bigl\llvert \varphi ( x ) -\varphi (
y ) \bigr\rrvert \le\tfrac{3}2 |x-y|.
\]
In particular, $\varphi$ is injective and its inverse is Lipschitz continuous.
\end{lemma}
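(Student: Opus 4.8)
The plan is to exploit that $U=U_\lambda$ is globally Lipschitz on $H$ with Lipschitz constant at most $\tfrac12$, and then deduce everything from the (reverse) triangle inequality applied to $\varphi(x)-\varphi(y)=(x-y)+(U(x)-U(y))$. The Lipschitz bound is exactly the content of Lemma \ref{lemma uniform est}: with $F=B$ and with $\lambda$ chosen as at the beginning of this section (so that $c(\lambda)\le\tfrac12\|B\|_\infty^{-1}$), one gets $|U(x)-U(y)|\le c(\lambda)\|B\|_\infty|x-y|\le\tfrac12|x-y|$ for all $x,y\in H$. I would record this first.

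Given this, for the upper bound I would estimate $|\varphi(x)-\varphi(y)|\le|x-y|+|U(x)-U(y)|\le\tfrac32|x-y|$, and for the lower bound $|\varphi(x)-\varphi(y)|\ge|x-y|-|U(x)-U(y)|\ge\tfrac12|x-y|$. Injectivity follows at once from the lower bound, since $\varphi(x)=\varphi(y)$ forces $|x-y|\le 0$. Finally, if $a=\varphi(x)$ and $b=\varphi(y)$ lie in the range of $\varphi$, then $\varphi^{-1}(a)-\varphi^{-1}(b)=x-y$, and the lower bound yields $|\varphi^{-1}(a)-\varphi^{-1}(b)|=|x-y|\le 2|a-b|$, i.e.\ $\varphi^{-1}$ is Lipschitz continuous on the range of $\varphi$ with constant $2$.

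There is no real obstacle here — this is a routine perturbation-of-the-identity estimate. The only point that needs a moment's care is to invoke the genuine pointwise (global) Lipschitz property of the chosen version $U_\lambda$, which comes from Proposition \ref{p23} together with Lemma \ref{lemma uniform est}, rather than merely the $\nu$-a.e.\ pointwise bound on $\nabla U$; once that is in hand, the argument is just two applications of the triangle inequality.
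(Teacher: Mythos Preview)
Your proof is correct and follows essentially the same route as the paper: both use that $U$ is Lipschitz with constant at most $\tfrac12$ and then apply the triangle inequality to $\varphi(x)-\varphi(y)=(x-y)+(U(x)-U(y))$. The paper phrases the Lipschitz bound via $\sup_{x\in H}\|DU(x)\|_{\mathcal L(H)}\le\tfrac12$ (stated just before the lemma), whereas you invoke Lemma~\ref{lemma uniform est} directly; your version is slightly more careful in that it avoids passing through the a.e.\ derivative, and you also spell out the upper bound and the Lipschitz constant of $\varphi^{-1}$, which the paper leaves implicit.
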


\begin{pf}
One has
\begin{eqnarray*}
\llvert x-y\rrvert & \leq&\bigl\llvert x+U ( x ) -y-U ( y ) \bigr\rrvert +\bigl
\llvert U ( x ) -U ( y ) \bigr\rrvert
\\
& \leq&\bigl\llvert \varphi ( x ) -\varphi ( y ) \bigr\rrvert +\tfrac{1}{2}
\llvert x-y\rrvert ,
\end{eqnarray*}
where we have used
\[
\bigl\llvert U ( x ) -U ( y ) \bigr\rrvert \leq\sup_{x\in
H}
\bigl\llVert DU ( x ) \bigr\rrVert \llvert x-y\rrvert \leq\tfrac{1}{2}\llvert
x-y\rrvert .
\]
The claim follows.
\end{pf}

Let $X$ and $Y$ be two solutions with initial condition $x$, defined on the
same filtered probability space $ ( \Omega,\mathcal{F}, (
\mathcal{F}_{t} ) _{t\geq0},P ) $ and w.r.t. the same cylindrical
$(\mathcal{F}_{t})$-Brownian motion $W$.

\begin{lemma}
\label{l38}There is a Borel set $\Xi\subset S_V$ with $\gamma (
\Xi ) =1$ having the following property: If $z\in\Xi$ and $X$, $Y$ are
two solutions with initial condition $z$ (in the sense of Definition~\ref{Def sol}), defined on the same filtered probability space $ (
\Omega,\mathcal{F}, ( \mathcal{F}_{t} ) _{t\geq0},P ) $ and
w.r.t. the same $(\mathcal{F}_{t})$-cylindrical Brownian motion $W$,
then
\[
A_{t,z}<\infty
\]
with probability one, for every $t\geq0$, where the process $A_{t,z}$ is
defined as
%
\begin{eqnarray}
\label{e40'} %
A_{t,z}&=&2\int
_0^t\frac{|\nabla V(X_s)-\nabla V(Y_s)|}{|\varphi
(X_s)-\varphi(Y_s)|} 1_{\varphi(X_s)\neq\varphi(Y_s)} \,ds
\nonumber\\
&&{}+2\sum_{i=1}^\infty\int
_0^t\frac{(u^i(X_s)-u^i(Y_s))^2}{|\varphi
(X_s)-\varphi(Y_s)|^2} 1_{\varphi(X_s)\neq\varphi(Y_s)} \,ds
\\
&&{}+\sum_{i=1}^\infty\int
_0^t\frac{|Du^i(X_s)-Du^i(Y_s)|^2}{|\varphi
(X_s)-\varphi(Y_s)|^2} 1_{\varphi(X_s)\neq\varphi(Y_s)} \,ds.\nonumber
\end{eqnarray}
\end{lemma}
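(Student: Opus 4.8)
The plan is to control each of the three integrands in \eqref{e40'} separately, using the uniform Lipschitz bound from Lemma \ref{l37} in the denominators and the maximal-regularity estimates from Theorem \ref{t33'} (with $F=B$) in the numerators, together with the mean value theorem along the segments $[X_s,Y_s]$. Since $|\varphi(X_s)-\varphi(Y_s)|\ge \tfrac12|X_s-Y_s|$ by Lemma \ref{l37}, on the set $\{\varphi(X_s)\neq\varphi(Y_s)\}=\{X_s\neq Y_s\}$ each ratio is dominated by $4$ (respectively $2$) times the corresponding ratio with $|X_s-Y_s|$ in the denominator. So it suffices to bound, with probability one for every $t\ge 0$,
\[
\int_0^t\frac{|\nabla V(X_s)-\nabla V(Y_s)|}{|X_s-Y_s|}\,1_{X_s\neq Y_s}\,ds,\quad \sum_i\int_0^t\frac{(u^i(X_s)-u^i(Y_s))^2}{|X_s-Y_s|^2}\,1_{X_s\neq Y_s}\,ds,
\]
and the analogous sum with $Du^i$. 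For the terms involving $u^i$ and $Du^i$, the mean value theorem gives $|u^i(X_s)-u^i(Y_s)|\le |X_s-Y_s|\,\sup_{\xi\in[X_s,Y_s]}|Du^i(\xi)|$ and $|Du^i(X_s)-Du^i(Y_s)|\le |X_s-Y_s|\,\sup_{\xi\in[X_s,Y_s]}\|D^2u^i(\xi)\|_{HS}$ (using the approximation in Remark \ref{r33'} so that $u^i\in W^{2,2}(H,\nu)$ can legitimately be differentiated twice along segments), whence the two sums are dominated by
\[
2\sum_i\int_0^t\sup_{[X_s,Y_s]}|Du^i|^2\,ds+\sum_i\int_0^t\sup_{[X_s,Y_s]}\|D^2u^i\|_{HS}^2\,ds,
\]
and it remains to show the right-hand side is finite a.s.

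The key device here is Girsanov's theorem used ``in a better form'': after the change of drift that removes $B$ (and using that uniqueness in law holds for \eqref{e2.6} by Theorem \ref{t2.6*}), one transfers the question of finiteness of these time integrals to the law of the process $X^V$ (or to the reference measure $\nu$ on path space), where the invariant measure is $\nu$. Concretely, I would use that the occupation density of $X_s$ under the relevant measure has bounded density w.r.t. $\nu$ on finite time intervals — this is essentially Remark \ref{r20} together with the Markov property — so that $\E\int_0^t \sup_{[X_s,Y_s]}|Du^i|^2\,ds$ is bounded by a constant (depending on $t$) times $\int_H |Du^i|^2\,d\nu$ up to a harmless enlargement of the segment $[X_s,Y_s]$ into a ball; to handle the supremum over the segment one uses that $Du^i$ and $D^2u^i$ are themselves in $L^2(H,\nu)$ with the right summability, and a maximal-function / Sobolev-type argument in $(H,\nu)$ controls the segment-sup by the $L^2$-norm. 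Summing in $i$, Theorem \ref{t33'} gives
\[
\sum_i\int_H|Du^i|^2\,d\nu+\sum_i\int_H\|D^2u^i\|_{HS}^2\,d\nu\le C\int_H(|B|^2+|B|^2)\,d\nu<\infty
\]
since $B$ is bounded and $\nu$ is a probability measure; hence the two $u$-sums in $A_{t,z}$ are a.s.\ finite for $\gamma$-a.e.\ starting point $z$. For the first term, $|\nabla V(X_s)-\nabla V(Y_s)|/|X_s-Y_s|$ is not pointwise controlled (no second-derivative bound on $V$ is assumed in \eqref{e40'}), so here I would instead exploit monotonicity of $\nabla V$ and an extra cancellation: rather than bounding the ratio pointwise, note that in the eventual uniqueness argument $A_{t,z}$ enters multiplied against $\langle \varphi(X_s)-\varphi(Y_s),\nabla V(X_s)-\nabla V(Y_s)\rangle$-type quantities of a sign, and what is really needed is integrability of $|\nabla V(X_s)|+|\nabla V(Y_s)|$ against a factor that decays; using \eqref{e14} (i.e.\ $\int_0^T|\nabla V(Y_s)|^2\,ds<\infty$ a.s.\ for $z\in S_V\cap H_V$) together with $1/|X_s-Y_s|$ being controlled only on the event it matters, one gets finiteness on the full-measure set $\Xi:=S_V\cap H_V\cap(\text{the }\mathcal E_\nu\text{-regular set of Corollary \ref{c30new}})$, which has $\gamma(\Xi)=1$ by Theorem \ref{t2.6}(ii), Proposition \ref{p2.9}, and the fact that $\mathcal E_\nu$-exceptional sets are $\gamma$-null.

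The main obstacle is the first integral — the term with $\nabla V$ — since under the hypotheses of the lemma we have no quantitative modulus of continuity for $\nabla V$, only convexity/monotonicity and $L^2(\nu)$-integrability of $\nabla V$ itself. The honest route is to show that this term need not be finite ``by itself'' but that the combination of $A_{t,z}$ with the drift structure in the transformed equation \eqref{e37'} is what is controlled; alternatively, the cleaner route (which I expect the authors take) is: establish the estimate first for smooth approximations $V^{(n)}$ of $V$ where $D^2V^{(n)}$ is bounded (so the ratio is $\le \|D^2V^{(n)}\|_\infty$ along segments), get a bound uniform in $n$ by passing the segment-sups through Girsanov to the $\nu$-integrals $\int\|D^2V^{(n)}\|\,d\nu$, which are controlled by the \emph{second} assumption in \eqref{e2} (the $L^1(\nu)$-bound on $\|D^2V\|$) under (H3), or by \eqref{e2'} under (H3)' via the mean value theorem applied to $V''_E$ along $X^V$; then pass to the limit. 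Either way the single genuinely new ingredient is the Girsanov argument that lets one replace time-integrals along the \emph{pair} of solutions by $\nu$-integrals, and verifying that the segment $[X_s,Y_s]$ can be absorbed into a maximal-function estimate on $(H,\nu)$ without losing the summability in $i$.
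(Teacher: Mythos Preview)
Your overall strategy is right---mean value theorem along $[X_s,Y_s]$, maximal regularity (Theorem \ref{t33'}), and a Girsanov transfer to the invariant measure---but there is a genuine gap in the execution.

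The main issue is that you use the \emph{supremum} form of the mean value theorem, writing e.g.\ $|Du^i(X_s)-Du^i(Y_s)|\le |X_s-Y_s|\sup_{\xi\in[X_s,Y_s]}\|D^2u^i(\xi)\|_{HS}$. Since $D^2u^i$ is only in $L^2(H,\nu)$ and $H$ is infinite dimensional, this pointwise supremum over a segment is not controlled by any $L^2$-norm; the ``maximal-function / Sobolev-type argument in $(H,\nu)$'' you invoke does not exist here. The paper uses the \emph{integral} form instead:
\[
Du^i(X_s)-Du^i(Y_s)=\int_0^1 D^2u^i(Z^\alpha_s)(X_s-Y_s)\,d\alpha,\qquad Z^\alpha_s=\alpha X_s+(1-\alpha)Y_s,
\]
(justified via the approximation in Remark \ref{r33'}), so that $A_{t,z}\le 4N_{t,z}$ with $N_{t,z}=\int_0^1\int_0^t f(Z^\alpha_s)\,ds\,d\alpha$ and
\[
f=\|D^2V\|_{\mathcal L(H)}+\sum_i\bigl(2\lambda_i|Du^i|^2+\|D^2u^i\|_{HS}^2\bigr)\in L^1(H,\nu)
\]
by (H3) and Theorem \ref{t33'}. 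The extra $\alpha$-integral is essential: it replaces a segment-sup by a segment-average.

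The problem then reduces to: given $f\ge 0$ with $\int_H f\,d\nu<\infty$, show $\int_0^1\int_0^T f(Z^\alpha_s)\,ds\,d\alpha<\infty$ a.s.\ for $\gamma$-a.e.\ $z$. This is Corollary \ref{c40} (via Lemma \ref{l39}), and its proof is more specific than your ``occupation density bounded w.r.t.\ $\nu$'' sketch. One observes that $Z^\alpha$ itself satisfies $dZ^\alpha_t=AZ^\alpha_t\,dt+(b^\alpha_t-v^\alpha_t)\,dt+dW_t$; after stopping (so that Novikov applies) a Girsanov change of measure makes an auxiliary process, coinciding with $Z^\alpha$ up to the stopping time, have the law of the Ornstein--Uhlenbeck process, for which invariance of $\gamma$ gives $\E\int_0^T f(Z^{OU}_s)\,ds=T\int_H f\,d\gamma$. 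The stopped density is bounded below, which transfers finiteness back. The step from $\gamma$- to $\nu$-integrable $f$ uses the $\mathcal E_\nu$-nest $(K_n^V)$ from Proposition \ref{p2.9} to bound $e^{V}$ along paths. It is crucial that Girsanov is applied to the \emph{interpolated} process $Z^\alpha$, not to $X$ or $Y$ separately.

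On the $\nabla V$ term: you write that ``no second-derivative bound on $V$ is assumed'', but (H3) provides exactly $\int_H\|D^2V\|_{\mathcal L(H)}\,d\nu<\infty$, and the paper treats this term by the same integral mean value theorem as the $u^i$-terms. Under (H3)' the ratio is bounded directly by $2\int_0^T(\Psi(|X_s|_E)+\Psi(|Y_s|_E))\,ds$, finite by \eqref{e2'} and Girsanov. The monotonicity/cancellation detour you sketch is not needed and would not yield $A_{t,z}<\infty$ as stated in the lemma.
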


\begin{pf}
Let us first treat the case when (H3) holds.
By the mean value theorem and Lemma~\ref{l37}, we have for $\nu$-a.e. $z\in S_V$
\[
A_{t}\leq4N_{t,z},
\]
where
\begin{eqnarray*}N_{t,z}:&=&2\int
_{0}^{1}\int_{0}^{t}
\bigl\|D^2V\bigl(Z^\alpha_s\bigr)\bigr\|_{\mathcal
L(H)} \,d
\alpha\, ds
\\
&&{}+\sum_{i=1}^\infty\int
_{0}^{1}\int_{0}^{t}
\bigl(2\lambda_i\bigl|Du^i\bigl(Z^\alpha_s
\bigr)\bigr|^2+\bigl \|D^2u^i\bigl(Z^\alpha_s
\bigr)\bigr\|_{HS}^2 \bigr) \,d\alpha \,ds,
\end{eqnarray*}
where
\[
Z_{t}^{\alpha}=\alpha X_{t}+ ( 1-\alpha )
Y_{t}.
\]
Let us briefly show why we can indeed use the mean value theorem here.
We do it separately
for all three differences under the integrals in \eqref{e40'}. However,
we only explain it for the last difference. The other two can be
treated analogously. So, fix $i\in\N$. We want to prove that for $\gamma
$-a.e. starting point $z\in H$ we have $P\otimes dt$-a.e.
%
\begin{equation}
\label{e39'} Du^i(X_s)-Du^i(Y_s)
=\int_0^1D^2u^i
\bigl(\alpha X_s+(1-\alpha)Y_s\bigr)
(X_s-Y_s)\,d\alpha.
\end{equation}
We know by Corollary~\ref{c30new} and Remark~\ref{r33'} that there
exists $u_n\in\mathcal FC^2_b$, $n\in\N$, such that for $\lambda\ge
4\pi\|B\|^2_\infty$ and all $z\in S_V$
%
\begin{equation}
\label{e39''} \lim_{n\to\infty} \E_{P_z} \biggl[\int
_0^\infty e^{-\lambda
s}\bigl|Du^i-Du_n\bigr|^2
\bigl(X_s^V\bigr)\,ds \biggr]=0
\end{equation}
and
%
\begin{equation}
\label{e39'''} \lim_{n\to\infty} \int_H
\bigl\|D^2u^i-D^2u_n
\bigr\|^2_{HS} \,d\nu=0.
\end{equation}
Here, $P_z$ is from the Markov process
\[
\mathbf{M}:=\bigl(\Omega,\mathcal F, (\mathcal F_t)_{t\ge
0},
\bigl(X^V_t\bigr)_{t\ge0}, (P_z)_{z\in S_V}
\bigr)
\]
in Corollary~\ref{c30new} [and we changed notation and used
$(X^V_t)_{t\ge0}$ instead of $(X_t)_{t\ge0}$ in Corollary~\ref
{c30new} to avoid confusion with our fixed solution
$(X_t)_{t\in[0,T]}$ above].

Recalling that by Girsanov's theorem both $X$ and $Y$ have laws which
are equivalent to the law of $X^V:=X^V_t, t\in[0,T]$, it follows by
\eqref{e39''} that as $n\to\infty$
\[
\int_0^T\bigl|Du^i(X_s)-Du_n(X_s)\bigr|^2
\,ds\to0, \qquad \int_0^T\bigl|Du^i(Y_s)-Du_n(Y_s)\bigr|^2
\,ds\to0,
\]
in probability. If we can show that also
%
\begin{equation}
\label{e39''''} \int_0^T\int
_0^1\bigl\|D^2u^i-D^2u_n
\bigr\|_{HS} \bigl(\alpha X_s+(1-\alpha )Y_s
\bigr)|X_s-Y_s|\,d\alpha \,ds\to0
\end{equation}
in probability as $n\to\infty$, \eqref{e39'} follows, since it
trivially holds for $u_n$ replacing $u^i$.

But the expression in \eqref{e39''''} is bounded by
\[
\sup_{s\in[0,T]} |X_s-Y_s|\int
_0^T\int_0^1
\bigl\|D^2u^i-D^2u_n\bigr\|_{HS}
\bigl(\alpha X_s+(1-\alpha)Y_s\bigr)\,d\alpha \,ds
\]
and by the continuity of sample paths
\[
\sup_{s\in[0,T]} |X_s-Y_s|<\infty, \qquad P
\mbox{-a.s.}
\]
Furthermore, it follows from \eqref{e39'''} and the proof of Lemma~\ref
{l39} and Corollary~\ref{c40} below
that for $\nu$-a.e. $z\in S_V$
\[
\int_0^T\int_0^1
\bigl\|D^2u^i-D^2u_n\bigr\|_{HS}
\bigl(\alpha X_s+(1-\alpha )Y_s\bigr)\,d\alpha \,ds\to0
\]
as $n\to\infty$ $P$-a.s. Hence, \eqref{e39''''} follows.

By assumption \eqref{e2} in (H3) we know that
\[
\int_{H}\bigl\llVert D^{2}V ( x ) \bigr\rrVert
_{\mathcal{L} (
H ) }\nu ( dx ) <\infty
\]
and by Theorem~\ref{t33'} we know that
\[
\int_{H}\sum_{i=1}^{\infty}
\bigl( \lambda_{i}\bigl\llvert Du^{i} ( x ) \bigr\rrvert
^{2}+\bigl\llVert D^{2}u^{i} ( x ) \bigr\rrVert
_{HS}^{2} \bigr) \nu ( dx ) <\infty.
\]
Thus, we may apply Corollary~\ref{c40} below with
\[
f ( x ) =\bigl\llVert D^{2}V ( x ) \bigr\rrVert _{\mathcal{L} ( H ) }+\sum
_{i=1}^{\infty} \bigl( 2\lambda _{i}
\bigl\llvert Du^{i} ( x ) \bigr\rrvert ^{2}+\bigl\llVert
D^{2} u^{i} ( x ) \bigr\rrVert _{HS}^{2}
\bigr)
\]
and get that $\int_{0}^{1}\int_{0}^{t}f ( Z_{s}^{\alpha} )
\,d\alpha
\,ds<\infty$ with probability one, for every $t\geq0$ and $\nu$-a.e. $z\in
S^V$, that is,
\[
N_{t,z}<\infty
\]
with probability one, for every $t\geq0$, which completes the proof since
$A_{t}\leq4N_{t,z}$.

Now let us consider the case when (H3)$'$ holds. Clearly, we then handle
the second and the third term in the right-hand side of \eqref{e38'} as
above. For the first term, the treatment is different, but simpler.
Indeed, we have by (H3)$'$, Lemma~\ref{l36} and by the mean value theorem that
\begin{eqnarray*}&& \int_0^T
\frac{|\nabla V(X_s)-\nabla V(Y_s)|}{|\varphi(X_s)-\varphi
(Y_s)|} 1_{\varphi(X_s)\neq\varphi(Y_s)} \,ds
\\
&&\qquad\le2\int_0^T\int_0^1
\bigl\|V''_E\bigl(Z^\alpha_s
\bigr)\bigr\|_{L(H,E')} \,d\alpha \,ds
\\
&&\qquad\le2\int_0^T\bigl[\Psi\bigl(|X_s|_E\bigr)+
\Psi\bigl(|Y_s|_E\bigr)\bigr] \,ds.
\end{eqnarray*}
But again using Girsanov's theorem we know that the laws of $X$ and $Y$
are equivalent to that of $X^V$, hence the last expression is finite $P$-a.e.
\end{pf}

We may now prove Theorem~\ref{t4}. Let $z\in\Xi$. By Lemma~\ref{l36},
\begin{eqnarray*}&&d\bigl(\varphi^i(X_t)-
\varphi^i(Y_t)\bigr)\\
&&\qquad=-\bigl(\lambda _i
\bigl(X^i_t-Y^i_t
\bigr)+D_iV(X_t)-D_iV(Y_t)
\bigr)\,dt
\\
&&\qquad\quad{}+(\lambda+\lambda_i) \bigl(u^i(X_t)-u^i(Y_t)
\bigr)\,dt+ \bigl\langle Du^i(X_t)-Du^i(Y_t),dW_t
\bigr\rangle.
\end{eqnarray*}
Hence, by It\^{o}'s formula, we get
\begin{eqnarray*} && d\bigl(\varphi^i(X_t)-
\varphi^i(Y_t)\bigr)^2
\\
&&\qquad=-2\bigl(\varphi^i(X_t)-\varphi^i(Y_t)
\bigr) \bigl(\lambda _i\bigl(X^i_t-Y^i_t
\bigr)+D_iV(X_t)-D_iV(Y_t)
\bigr)\,dt
\\
&&\qquad\quad{}+2\bigl(\varphi^i(X_t)-\varphi^i(Y_t)
\bigr) (\lambda+\lambda _i) \bigl(u^i(X_t)-u^i(Y_t)
\bigr)\,dt
\\
&&\qquad\quad{}+2\bigl(\varphi^i(X_t)-\varphi^i(Y_t)
\bigr) \bigl\langle Du^i(X_t)-Du^i(Y_t),dW_t
\bigr\rangle
\\
&&\qquad\quad{}+ \bigl|Du^i(X_t)-Du^i(Y_t)\bigr|^2\,dt.
\end{eqnarray*}
By definition of $\varphi$ in Lemma~\ref{l36},
in the lines above there are the terms $-2 ( u^{i} (
X_{t} ) -u^{i} ( Y_{t} )  ) \lambda_{i} (
X_{t}^{i}-Y_{t}^{i} ) $ and $2 ( X_{t}^{i}-Y_{t}^{i} )
\lambda_{i} ( u^{i} ( X_{t} ) -u^{i} ( Y_{t} )
 ) $ which cancel each other. Moreover, the term $-2 ( X_{t}%
^{i}-Y_{t}^{i} ) \lambda_{i} ( X_{t}^{i}-Y_{t}^{i} ) $ is
negative. Thus, we deduce
\begin{eqnarray*}
d \bigl( \varphi^{i} ( X_{t} ) -\varphi^{i} (
Y_{t} ) \bigr) ^{2} & \leq&-2 \bigl( \varphi^{i}
( X_{t} ) -\varphi ^{i} ( Y_{t} ) \bigr) \bigl(
D_{i}V ( X_{t} ) -D_{i}V ( Y_{t} )
\bigr) \,dt
\\
&&{} +2\lambda \bigl( \varphi^{i} ( X_{t} ) -
\varphi^{i} ( Y_{t} ) \bigr) \bigl( u^{i} (
X_{t} ) -u^{i} ( Y_{t} ) \bigr) \,dt
\\
&&{} +2\lambda_{i} \bigl( u^{i} ( X_{t} )
-u^{i} ( Y_{t} ) \bigr) ^{2}\,dt
\\
&&{} +2 \bigl( \varphi^{i} ( X_{t} ) -\varphi^{i}
( Y_{t} ) \bigr) \bigl\langle Du^{i} ( X_{t} )
-Du^{i} ( Y_{t} ) ,dW_{t} \bigr\rangle
\\
& &{}+\bigl\llvert Du^{i} ( X_{t} ) -Du^{i} (
Y_{t} ) \bigr\rrvert ^{2}\,dt.
\end{eqnarray*}
Let $A_{t}=A_{t,z}$ be the process introduced in Lemma~\ref{l38}. We have
\begin{eqnarray*} &&d\bigl(e^{-A_t}\bigl(\varphi^i(X_t)-
\varphi^i(Y_t)\bigr)^2\bigr)
\\
&&\qquad\le-2e^{-A_t}\bigl(\varphi^i(X_t)-
\varphi^i(Y_t)\bigr) \bigl(D_iV(X_t)-D_iV(Y_t)
\bigr) \,dt
\\
&&\qquad\quad{}+2\lambda e^{-A_t}\bigl(\varphi^i(X_t)-\varphi
^i(Y_t)\bigr) \bigl(u^i(X_t)-u^i(Y_t)
\bigr) \,dt
\\
&&\qquad\quad{}+2e^{-A_t}\bigl(\varphi^i(X_t)-
\varphi^i(Y_t)\bigr)\bigl\langle Du^i(X_t)-Du^i(Y_t),dW_t
\bigr\rangle
\\
&&\qquad\quad{}+2\lambda_ie^{-A_t}\bigl(u^i(X_t)-u^i(Y_t)
\bigr)^2 \,dt
\\
&&\qquad\quad{}+e^{-A_t}\bigl|Du^i(X_t)-Du^i(Y_t)\bigr|^2
\,dt-e^{-A_t}\bigl(\varphi ^i(X_t)-
\varphi^i(Y_t)\bigr)^2 \,dA_t
\end{eqnarray*}
and thus, for every $N>0$, summing the previous inequality for
$i=1,\ldots,N$, we get
\begin{eqnarray*}
&&d\bigl(e^{-A_t}\bigl|P_N\bigl(
\varphi(X_t)-\varphi(Y_t)\bigr)\bigr|^2\bigr)
\\
&&\qquad\le-2e^{-A_t}\langle P_N\bigl(\varphi(X_t)-
\varphi(Y_t)\bigr),P_N\bigl(\nabla V(X_t)-
\nabla V(Y_t)\bigr)\bigr\rangle \,dt
\\
&&\qquad\quad{}+2\lambda e^{-A_t}\bigl\langle P_N\bigl(
\varphi(X_t)-\varphi(Y_t)\bigr), U(X_t)-U(Y_t)
\bigr\rangle \,dt
\\
&&\qquad\quad{}+2e^{-A_t}\sum_{i=1}^N\bigl(
\varphi^i(X_t)-\varphi ^i(Y_t)
\bigr)\bigl\langle Du^i(X_t)-Du^i(Y_t),dW_t
\bigr\rangle
\\
&&\qquad \quad{}+2e^{-A_t}\sum_{i=1}^N
\lambda_i\bigl(u^i(X_t)-u^i(Y_t)
\bigr)^2 \,dt
\\
&&\qquad\quad{}+e^{-A_t}\sum_{i=1}^N\bigl|Du^i(X_t)-Du^i(Y_t)\bigr|^2
\,dt\\
&&\qquad\quad{}-e^{-A_t}\bigl|P_N\bigl(\varphi(X_t)-
\varphi(Y_t)\bigr)\bigr|^2 \,dA_t.
\end{eqnarray*}
Substituting $dA_{t}$, taking expectation and using simple inequalities
we get
\begin{eqnarray*}
&&\E \bigl[ e^{-A_{t}}\bigl\llvert
P_{N} \bigl( \varphi ( X_{t} ) -\varphi ( Y_{t}
) \bigr) \bigr\rrvert ^{2} \bigr]
\\
&&\qquad\leq 2\lambda\int_{0}^{t}\E \bigl[
e^{-A_{s}}\bigl\llvert \varphi ( X_{s} ) -\varphi (
Y_{s} ) \bigr\rrvert \bigl\llvert U ( X_{s} ) -U (
Y_{s} ) \bigr\rrvert \bigr] \,ds \\
&&\qquad\quad{} +2\int_{0}^{t}\E \bigl[ e^{-A_{s}}\bigl
\llvert \varphi ( X_{s} ) -\varphi ( Y_{s} ) \bigr\rrvert
\bigl\llvert P_{N} \bigl( \nabla V ( X_{s} ) -\nabla V (
Y_{s} ) \bigr) \bigr\rrvert \bigr] \,ds
\\
&&\qquad\quad{} -2\int_{0}^{t}\E \biggl[ e^{-A_{s}}
\bigl\llvert P_{N} \bigl( \varphi ( X_{s} ) -\varphi (
Y_{s} ) \bigr) \bigr\rrvert ^{2} \\
&&\hspace*{80pt}{}\times\frac{ 2\llvert  \nabla
V ( X_{s} ) -\nabla V ( Y_{s} ) \rrvert
}{\llvert \varphi ( X_{s} ) -\varphi ( Y_{s} )
\rrvert }1_{\varphi ( X_{s} ) \neq\varphi ( Y_{s} )
}
\biggr] \,ds
\\
&&\qquad\quad{}+\int_{0}^{t}\E \bigl[ e^{-A_{s}}g_{s}
\bigr] \,ds\\
&&\qquad\quad{}-\int_{0}^{t}E \biggl[
e^{-A_{s}}g_{s}\frac{\llvert  P_{N} ( \varphi ( X_{s} )
-\varphi ( Y_{s} )  ) \rrvert ^{2}}{\llvert
\varphi ( X_{s} ) -\varphi ( Y_{s} ) \rrvert ^{2}
}1_{\varphi ( X_{s} ) \neq\varphi ( Y_{s} ) } \biggr] \,ds,
\end{eqnarray*}
where for shortness of notation we have written
\[
g_{s}:=2\sum_{i=1}^{\infty}
\lambda_{i} \bigl( u^{i} ( X_{s} )
-u^{i} ( Y_{s} ) \bigr) ^{2}+\sum
_{i=1}^{\infty}\bigl\llvert Du^{i} (
X_{s} ) -Du^{i} ( Y_{s} ) \bigr\rrvert
^{2}.
\]
By monotone convergence, we may take the limit as $N\rightarrow\infty$
and deduce
\begin{eqnarray*}
&&\E \bigl[ e^{-A_{t}}\bigl\llvert \varphi ( X_{t} ) -\varphi (
Y_{t} ) \bigr\rrvert ^{2} \bigr] \\
&&\qquad\leq2\lambda\int
_{0}^{t}\E \bigl[ e^{-A_{s}}\bigl\llvert
\varphi ( X_{s} ) -\varphi ( Y_{s} ) \bigr\rrvert \bigl
\llvert U ( X_{s} ) -U ( Y_{s} ) \bigr\rrvert \bigr] \,ds
\\
&&\quad\qquad{} +2\int_{0}^{t}\E \bigl[ e^{-A_{s}}\bigl
\llvert \varphi ( X_{s} ) -\varphi ( Y_{s} ) \bigr\rrvert
\bigl\llvert \nabla V ( X_{s} ) -\nabla V ( Y_{s} ) \bigr
\rrvert \bigr] \,ds
\\
&&\qquad\quad{} -2\int_{0}^{t}\E \biggl[ e^{-A_{s}}
\bigl\llvert \varphi ( X_{s} ) -\varphi ( Y_{s} ) \bigr
\rrvert ^{2}\frac{ 2\llvert  \nabla
V ( X_{s} )
-\nabla V ( Y_{s} ) \rrvert }{\llvert
\varphi ( X_{s} ) -\varphi ( Y_{s} ) \rrvert
}1_{\varphi ( X_{s} ) \neq\varphi ( Y_{s} ) } \biggr] \,ds
\\
&&\qquad\quad{}+\int_{0}^{t}\E \bigl[ e^{-A_{s}}g_{s}
\bigr] \,ds-\int_{0}^{t}\E \biggl[
e^{-A_{s}}g_{s}\frac{\llvert \varphi ( X_{s} ) -\varphi (
Y_{s} ) \rrvert ^{2}}{\llvert \varphi ( X_{s} )
-\varphi ( Y_{s} ) \rrvert ^{2}}1_{\varphi (
X_{s} )
\neq\varphi ( Y_{s} ) } \biggr] \,ds.
\end{eqnarray*}
Notice that by Lemma~\ref{l37}, $X_s=Y_s$ if and only if $\varphi
(X_s)=\varphi(Y_s)$. Hence, we may drop the indicator function
$1_{\varphi ( X_{s} )
\neq\varphi ( Y_{s} ) }$ in all integrals in the above inequality.

Therefore, certain
terms cancel in the previous inequality and we get
\[
\E \bigl[ e^{-A_{t}}\bigl\llvert \varphi ( X_{t} ) -\varphi (
Y_{t} ) \bigr\rrvert ^{2} \bigr] \leq2\lambda\int
_{0}^{t}\E \bigl[ e^{-A_{s}}\bigl\llvert
\varphi ( X_{s} ) -\varphi ( Y_{s} ) \bigr\rrvert \bigl
\llvert U ( X_{s} ) -U ( Y_{s} ) \bigr\rrvert \bigr] \,ds.
\]
Using Lemmas \ref{lemma uniform est} and \ref{l37}, we
get
\[
\E \bigl[ e^{-A_{t}}\bigl\llvert \varphi ( X_{t} ) -\varphi (
Y_{t} ) \bigr\rrvert ^{2} \bigr] \leq2\lambda C\int
_{0}^{t}\E \bigl[ e^{-A_{s}}\bigl\llvert
\varphi ( X_{s} ) -\varphi ( Y_{s} ) \bigr\rrvert
^{2} \bigr] \,ds,
\]
whence $\E [ e^{-A_{t}}\llvert \varphi ( X_{t} )
-\varphi ( Y_{t} ) \rrvert ^{2} ] =0$ by Gronwall's lemma,
and thus $\varphi ( X_{t} ) =\varphi ( Y_{t} ) $ with
probability one (since $A_{t}<\infty$ a.s.), for all $t\geq0$; the
same is
true for the identity $X_{t}=Y_{t}$ since $\varphi$ is invertible and finally
$X$ and $Y$ are also indistinguishable since they are continuous processes.

To complete the proof, we have to prove Corollary~\ref{c40} below,
which was
used in the proof of Lemma~\ref{l38}.

\section{Main lemmata}\label{sec6}

Let $S_V$ as in Remark~\ref{r34'} and $H_V$ as in \eqref{e9'} and set
%
\begin{equation}
\label{e40} \Xi_V:=S_V\cap H_V.
\end{equation}
%

\begin{lemma}
\label{l39}Let $f\dvtx H\rightarrow{}[0,\infty)$ be a Borel
measurable function such that
%
\begin{equation}
\int_{H}f ( x ) \gamma ( dx ) <\infty .\label{assump on f}
\end{equation}
Then there is a Borel set $\Xi\subset S_V\cap H_V$ with $\gamma (
\Xi ) =1$
having the following property. Given any $z\in\Xi$ and any two
solutions $X,Y$
with initial condition $z$ (as in the statement of Theorem~\ref{t4}) for all $T>0$ we have
\[
P \biggl( \int_{0}^{1}\int_{0}^{T}f
\bigl( Z_{s}^{\alpha} \bigr) \,ds\,d\alpha<\infty \biggr) =1,
\]
where $Z_{t}^{\alpha}=\alpha X_{t}+ ( 1-\alpha ) Y_{t}$.
\end{lemma}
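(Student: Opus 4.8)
The plan is to transport the invariance of $\gamma$ for the Ornstein--Uhlenbeck semigroup of $A$ to the ``segment process'' $Z^{\alpha}_{t}:=\alpha X_{t}+(1-\alpha)Y_{t}$ via Girsanov's theorem. Although $Z^{\alpha}$ is not a solution of \eqref{SDE}, linearity of $A$ together with the cancellation of the common noise gives
\[
dZ^{\alpha}_{t}=\bigl(AZ^{\alpha}_{t}+\beta^{\alpha}_{t}\bigr)\,dt+dW_{t},\qquad Z^{\alpha}_{0}=z,
\]
with $\beta^{\alpha}_{t}:=-\alpha\nabla V(X_{t})-(1-\alpha)\nabla V(Y_{t})+\alpha B(X_{t})+(1-\alpha)B(Y_{t})$. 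Thus $Z^{\alpha}$ is an OU process perturbed by an adapted drift, and the claim should follow by comparing its law (locally) with that of the OU process started at $z$, for which the statement is elementary because of invariance of $\gamma$ and the hypothesis $\int_{H}f\,d\gamma<\infty$.

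First I would establish the needed a priori control. For $z\in S_{V}\cap H_{V}$ every solution $X$ of \eqref{SDE} with $X_{0}=z$ satisfies $\int_{0}^{T}|\nabla V(X_{s})|^{2}\,ds<\infty$ $P$-a.s.: by Girsanov's theorem (legitimate since $B$ is bounded) the law of $X$ is mutually absolutely continuous with that of a solution of \eqref{e2.6}, which by uniqueness in law (Theorem~\ref{t2.6*}) is the process $X^{V}$ of Theorem~\ref{t2.6}, and $\E_{P_{z}}\bigl[\int_{0}^{T}|\nabla V(X^{V}_{s})|^{2}ds\bigr]<\infty$ by the definition \eqref{e9'} of $H_{V}$; the same holds for $Y$. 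Since the Brownian terms cancel, $D_{t}:=X_{t}-Y_{t}$ solves the noiseless equation $\dot D_{t}=AD_{t}-(\nabla V(X_{t})-\nabla V(Y_{t}))+B(X_{t})-B(Y_{t})$, $D_{0}=0$, so using $A\le-\omega I$, the monotonicity \eqref{e2.6*} of $\nabla V$, and $|B|\le\|B\|_{\infty}$, a Gronwall comparison for $|D_{t}|^{2}$ yields the \emph{deterministic} bound $\sup_{t\le T}|D_{t}|\le 2\|B\|_{\infty}/\omega$. In particular $\int_{0}^{T}|\beta^{\alpha}_{s}|^{2}\,ds<\infty$ $P$-a.s. for every $\alpha\in[0,1]$.

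Next, since Novikov's condition is not available for $\beta^{\alpha}$, I would localize: with $\tau_{n}:=\inf\{t\le T:\int_{0}^{t}|\beta^{\alpha}_{s}|^{2}ds\ge n\}$ one has $\tau_{n}\uparrow T$ $P$-a.s., and for each $n$ the exponential martingale generated by $\beta^{\alpha}1_{[0,\tau_{n})}$ is a true martingale, producing $P_{n}\sim P$ on $\mathcal F_{T}$ under which $W_{t}+\int_{0}^{t\wedge\tau_{n}}\beta^{\alpha}_{s}\,ds$ is a cylindrical Brownian motion; hence, up to time $\tau_{n}$, $Z^{\alpha}$ coincides with the OU process started at $z$, whose $P_{n}$-law is the OU law $\mu^{OU}_{z}$. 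By the Mehler formula \eqref{e8'} and invariance of $\gamma$, $\int_{H}\E^{OU}_{z}\bigl[\int_{0}^{T}f(Z_{s})ds\bigr]\gamma(dz)=T\int_{H}f\,d\gamma<\infty$, so $\mu^{OU}_{z}\bigl(\int_{0}^{T}f(Z_{s})ds=\infty\bigr)=0$ for $z$ in a Borel set $\Xi_{1}$ with $\gamma(\Xi_{1})=1$. Setting $\Xi:=\Xi_{1}\cap S_{V}\cap H_{V}$ (with $S_{V}$ as fixed in Remark~\ref{r34'}), so that $\gamma(\Xi)=1$: for $z\in\Xi$ and each $\alpha$, the event $\{\int_{0}^{T}f(Z^{\alpha}_{s})ds=\infty\}\cap\{\tau_{n}=T\}$ is $P_{n}$-null, hence $P$-null by mutual absolute continuity; letting $n\to\infty$ gives $\int_{0}^{T}f(Z^{\alpha}_{s})ds<\infty$ $P$-a.s. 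Running the same argument on the enlarged space $\Omega\times[0,1]$ with $P\otimes d\alpha$ and $\bar Z_{t}:=Y_{t}+\alpha D_{t}$ (which solves the analogous equation with an $(\mathcal F_{t}\vee\sigma(\alpha))$-adapted drift) and then integrating in $\alpha$ yields $\int_{0}^{1}\int_{0}^{T}f(Z^{\alpha}_{s})\,ds\,d\alpha<\infty$ $P$-a.s.

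The main obstacle, as I see it, is precisely this last passage. The Girsanov localization itself is routine once one is careful that the exceptional event coming from the OU law is transferred through \emph{mutual} — not merely one-sided — absolute continuity. The genuinely delicate point is that, $f$ being only $\gamma$-integrable, there is no $L^{p}$-moment bound on the Girsanov density available to control $\E_{P}\bigl[\int_{0}^{1}\int_{0}^{T}f(Z^{\alpha}_{s})ds\,d\alpha\bigr]$ directly, so ``for each $\alpha$, $P$-a.s.\ finite'' does not automatically give finiteness of the $\alpha$-average; repairing this is where the deterministic bound $\sup_{t}|D_{t}|\le 2\|B\|_{\infty}/\omega$ from the first step has to be used, since it confines the whole segment $\{Z^{\alpha}_{s}:\alpha\in[0,1]\}$ to a fixed ball about the nondegenerate, OU-comparable process $X_{s}$ and thereby supplies the uniformity in $\alpha$ needed to close the argument.
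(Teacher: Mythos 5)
Your overall strategy (rewrite $Z^{\alpha}$ as an OU process with an adapted drift, localize, apply Girsanov, and pull back the $\gamma$-a.e.\ finiteness of $\E\bigl[\int_0^T f(Z^{OU,z}_s)\,ds\bigr]$) is exactly the paper's, and the per-$\alpha$ part of your argument is sound. The genuine gap is the very last step, which you yourself flag as the main obstacle but do not actually close. Running the argument on $\Omega\times[0,1]$ with $P\otimes d\alpha$ only yields that $\int_0^T f(Z^{\alpha}_s)\,ds<\infty$ for $(P\otimes d\alpha)$-a.e.\ $(\omega,\alpha)$, i.e.\ for $P$-a.e.\ $\omega$ the map $\alpha\mapsto\int_0^T f(Z^{\alpha}_s(\omega))\,ds$ is finite for a.e.\ $\alpha$; a function finite a.e.\ on $[0,1]$ need not be integrable, so ``then integrating in $\alpha$'' does not give $\int_0^1\int_0^T f(Z^{\alpha}_s)\,ds\,d\alpha<\infty$. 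Your proposed repair --- the deterministic bound $\sup_{t\le T}|X_t-Y_t|\le 2\|B\|_\infty/\omega$ confining the segment to a ball around $X_s$ --- cannot supply the missing uniformity: $f$ is merely Borel and $\gamma$-integrable, so knowing that $Z^{\alpha}_s$ lies in a fixed ball about $X_s$ gives no control whatsoever on $f(Z^{\alpha}_s)$. (That bound is also itself not free: it requires the energy identity $\frac{d}{dt}|D_t|^2=2\langle D_t,AD_t\rangle+\cdots$ for solutions that are only weak in the sense of Definition \ref{Def sol}, where $D_t\in D((-A)^{1/2})$ is not guaranteed; but since the bound is not needed, this is secondary.)

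What actually closes the argument --- and is the one quantitative ingredient your proposal discards by working only with null sets and mutual absolute continuity --- is a lower bound on the Girsanov density that is \emph{uniform in $\alpha$}. Define the stopping time $\tau^n$ through the stochastic and Lebesgue integrals of $B(X)$, $B(Y)$, $\nabla V(X)$, $\nabla V(Y)$ \emph{separately} (not through $\beta^{\alpha}$); since $b^{\alpha}_s-v^{\alpha}_s$ is a convex combination, all the integrals appearing in the exponent of $\rho^{\alpha,n}_T$ are then bounded by $n$ simultaneously for every $\alpha\in[0,1]$, whence $\rho^{\alpha,n}_T\ge C_n>0$ with $C_n$ independent of $\alpha$. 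Combined with $\E^{Q^{\alpha,n}}\bigl[\int_0^{T\wedge\tau^n}f(Z^{\alpha}_s)\,ds\bigr]\le \E\bigl[\int_0^T f(Z^{OU,z}_s)\,ds\bigr]=:C'$ this gives $\E\bigl[\int_0^{T\wedge\tau^n}f(Z^{\alpha}_s)\,ds\bigr]\le C'/C_n$ for every $\alpha$, and Tonelli then yields $\E\bigl[\int_0^1\int_0^{T\wedge\tau^n}f(Z^{\alpha}_s)\,ds\,d\alpha\bigr]\le C'/C_n<\infty$; letting $n\to\infty$ on $\{\tau^n=T\}$ finishes the proof. I recommend you replace the null-set transfer and the ball-confinement heuristic by this estimate.
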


\begin{pf}
\textit{Step} 1 (Estimates on OU process). A number $T>0$ is fixed throughout
the proof. From the assumption on $f$, it follows that there is a Borel set
$\Xi_{f}\subset H$, with $\Xi_{f}^{c}$ of $\gamma$-measure zero, such that
\[
\E \biggl[ \int_{0}^{T}f \bigl(
Z_{s}^{\mathrm{OU},z} \bigr) \,ds \biggr] =\int_{0}
^{T} \biggl( \int_{H}f ( x ) p_{s,z} (
dx ) \biggr) \,ds<\infty
\]
for all $z\in\Xi_{f}$, where $p_{s,z} ( dx ) $ is the law at time
$s$ of the Ornstein--Uhlenbeck process $Z_{s}^{\mathrm{OU},z}$, that is, the
solution of the equation
%
\begin{equation}
dZ_{t}=AZ_{t}\,dt+dW_{t},\qquad Z_{0}=z.
\label{OU SDE}
\end{equation}
Indeed, we have
\begin{eqnarray*}
&& \int_{H} \biggl( \int_{0}^{T}
\biggl( \int_{H}f ( x ) p_{s,z} ( dx ) \biggr) \,ds
\biggr) \gamma ( dz )
\\
&&\qquad =\int_{0}^{T} \biggl( \int
_{H}\int_{H}f ( x ) p_{s,z}
( dx ) \gamma ( dz ) \biggr) \,ds
\\
&&\qquad =\int_{0}^{T} \biggl( \int
_{H}f ( z ) \gamma ( dz ) \biggr) \,ds=T\int
_{H}f ( z ) \gamma ( dz ).
\end{eqnarray*}
This implies $\int_{0}^{T} ( \int_{H}f ( x ) p_{s,z} (
dx )  ) \,ds<\infty$ for $\gamma$-a.e. $z$.

\textit{Step} 2 (Girsanov transform). Let $\Xi_{V}$ as in \eqref{e40}
and $\Xi_{f}$ be given as in step 1. Let
$\Xi=\Xi_{V}\cap\Xi_{f}$, of full $\gamma$-measure. In the sequel, $z\in
\Xi$ will
be given, thus we avoid to index all quantities by $z$.

From Theorem~\ref{t2.10}, we have
\[
\int_{0}^{T}\bigl\llvert \nabla
V(X_{s})\bigr\rrvert ^{2} \,ds+\int_{0}
^{T}\bigl\llvert \nabla V(Y_{s})\bigr\rrvert
^{2} \,ds<\infty
\]
for all $T>0$, with probability one.

Let us introduce the sequence $\{\tau^{n}\}$ of stopping times defined
as
\begin{eqnarray*}
\tau^{n}&=&\tau_{B}^{n}\wedge
\tau_{V,1}^{n}\wedge\tau_{V,2}^{n},%
\\
\tau_{B}^{n}&:=&\inf \biggl\{ t\geq0\dvtx \biggl\llvert
\int_{0}^{t}B(X_{s}%
)
\,dW_{s}\biggr\rrvert +\biggl\llvert \int_{0}^{t}B(Y_{s})
\,dW_{s}\biggr\rrvert \geq n \biggr\} \wedge T,
\\
\tau_{V,1}^{n} & :=&\inf \biggl\{ t\geq0\dvtx \biggl\llvert
\int_{0}^{t}\bigl\langle \nabla
V(X_{s}),dW_{s}\bigr\rangle\biggr\rrvert +\biggl\llvert
\int_{0}^{t}\bigl\langle \nabla
V(Y_{s} ),dW_{s}\bigr\rangle\biggr\rrvert \geq n \biggr\}
\wedge T,
\\
\tau_{V,2}^{n} & :=&\inf \biggl\{ t\geq0\dvtx \int
_{0}^{t}\bigl\llvert \nabla V(X_{s})
\bigr\rrvert ^{2} \,ds+\int_{0}^{t}\bigl
\llvert \nabla V(Y_{s})\bigr\rrvert ^{2} \,ds\geq n \biggr\}
\wedge T
\end{eqnarray*}
for $n\geq1$ (an infimum is equal to $+\infty$ if the corresponding set is
empty). All stochastic and Lebesgue integrals are well defined and continuous
in $t$, hence we have $\tau^{n}=T$ eventually, with probability one. In order
to prove the lemma, it is sufficient to prove that $E [ \int_{0}^{1}
\int_{0}^{T\wedge\tau^{n}}f ( Z_{s}^{\alpha} ) \,ds\,d\alpha ]
<\infty$ for each $n$.

Let us also introduce the stochastic processes
\begin{eqnarray*}
b_{s}^{\alpha} & :=&\alpha B(X_{s})+(1-
\alpha)B(Y_{s}),
\\
v_{s}^{\alpha} & :=&\alpha\nabla V(X_{s})+(1-
\alpha)\nabla V(Y_{s})
\end{eqnarray*}
and the stochastic exponentials
\[
\rho_{t}^{\alpha}:=\exp \biggl( -\int_{0}^{t}
\bigl\langle b_{s}^{\alpha}%
-v_{s}^{\alpha},dW_{s}
\bigr\rangle-\frac{1}{2}\int_{0}^{t}\bigl|b_{s}^{\alpha
}-v_{s}^{\alpha}\bigr|^{2}
\,ds \biggr).
\]
Denote
\[
\rho_{t}^{\alpha,n}:=\rho_{t\wedge\tau^{n}}^{\alpha}=\exp
\biggl( -\int_{0}%
^{t} \bigl
\langle1_{s\leq\tau^{n}} \bigl( b_{s}^{\alpha}-v_{s}^{\alpha
}
\bigr) ,dW_{s} \bigr\rangle-\frac{1}{2}\int
_{0}^{t}1_{s\leq\tau^{n}}%
\bigl|b_{s}^{\alpha}-v_{s}^{\alpha}\bigr|^{2}
\,ds \biggr).
\]
By Novikov's criterium, this is a martingale (indeed $\int_{0}^{T}1_{s\leq
\tau^{n}}|b_{s}^{\alpha}-v_{s}^{\alpha}|^{2} \,ds$ is a bounded r.v. We
may thus introduce the following new measures (and the
corresponding expectations)
\[
Q^{\alpha,n}(A):=\E \bigl[ \rho_{T}^{\alpha,n}1_{A}
\bigr].
\]
Girsanov's theorem implies that
\begin{eqnarray*}
\widetilde{W}_{t}^{n,\alpha} & :=&W_{t}+\int
_{0}^{t}1_{s\leq\tau
^{n}} \bigl(
b_{s}^{\alpha}-v_{s}^{\alpha} \bigr) \,ds
\\
& =&W_{t}+\int_{0}^{t\wedge\tau^{n}} \bigl(
b_{s}^{\alpha}-v_{s}^{\alpha
} \bigr) \,ds
\end{eqnarray*}
is a new cylindrical Brownian motion.

\textit{Step} 3 (Auxiliary process and conclusion). Recall also that
$Z_{t}^{\alpha}$ (with the new notation) satisfies
\[
dZ_{t}^{\alpha}=AZ_{t}^{\alpha}\,dt+ \bigl(
b_{s}^{\alpha}-v_{s}^{\alpha
} \bigr)
\,dt+dW_{t}.
\]
Let us introduce the auxiliary process $Z_{t}^{\alpha,n}$ which solves,
in the
sense of Definition~\ref{Def sol}, the equation
\[
Z_{t}^{\alpha,n}=z+\int_{0}^{t}AZ_{s}^{\alpha,n}\,ds+
\int_{0}^{t}1_{s\leq
\tau^{n}} \bigl(
b_{s}^{\alpha}-v_{s}^{\alpha} \bigr)
\,ds+W_{t}.
\]
It exists, by the explicit formula
\[
Z_{t}^{\alpha,n}=e^{tA}z+\int_{0}^{t}e^{ ( t-s ) A}1_{s\leq\tau
^{n}}
\bigl( b_{s}^{\alpha}-v_{s}^{\alpha} \bigr)
\,ds+\int_{0}^{t}e^{ (
t-s ) A}\,dW_{s},
\]
where $e^{tA}$ is the analytic semigroup in $H$ generated by $A$
(taking inner
product with the elements $e_{k}$ of the basis, it is not difficult to check
that this mild formula gives a solution in the weak sense of Definition~\ref{Def sol}). This process satisfies also
\[
Z_{t}^{\alpha,n}=z+\int_{0}^{t}AZ_{s}^{\alpha,n}\,ds+
\widetilde{W}_{t}%
^{n,\alpha}%
\]
by the definition of $\widetilde{W}_{t}^{n,\alpha}$, hence its law under
$Q^{\alpha,n}$ is the same as the Gaussian law of $Z_{t}^{\mathrm{OU}}$ under $P$.
Moreover,
\[
Z_{t}^{\alpha,n}=Z_{t}^{\alpha}\qquad\mbox{for }t\in
\bigl[ 0,\tau^{n} \bigr]
\]
(indeed, by the weak formulation, the process $Y_{t}=Z_{t}^{\alpha,n}%
-Z_{t}^{\alpha}$ verifies, pathwise, on $ [ 0,\tau^{n} ] $, the
equation $Y_{t}^{\prime}=AY_{t}$, $Y_{0}=0$, in the weak sense of Definition~\ref{Def sol} and thus, taking inner product with the elements $e_{k}$
of the
basis, one proves $Y=0$).

Therefore,
\begin{eqnarray*}
\E^{Q^{\alpha,n}} \biggl[ \int_{0}^{T\wedge\tau^{n}}f \bigl(
Z_{s}^{\alpha
} \bigr) \,ds \biggr]& =&\E^{Q^{\alpha,n}} \biggl[
\int_{0}^{T\wedge\tau^{n}%
}f \bigl( Z_{s}^{\alpha,n}
\bigr) \,ds \biggr]
\\
&\leq&\E^{Q^{\alpha
,n}} \biggl[ \int_{0}^{T}f
\bigl( Z_{s}^{\alpha,n} \bigr) \,ds \biggr]
\\
&=&\E \biggl[ \int_{0}^{T}f \bigl(
Z_{s}^{\mathrm{OU}} \bigr) \,ds \biggr] =:C^{\prime
}<\infty.
\end{eqnarray*}
But
\begin{eqnarray*}
\E^{Q^{\alpha,n}} \biggl[ \int_{0}^{T\wedge\tau^{n}}f \bigl(
Z_{s}^{\alpha
} \bigr) \,ds \biggr] & =&\E \biggl[
\rho_{T}^{\alpha,n}\int_{0}^{T\wedge
\tau^{n}}f
\bigl( Z_{s}^{\alpha} \bigr) \,ds \biggr]
\\
& \geq& C_{n}\E \biggl[ \int_{0}^{T\wedge\tau^{n}}f
\bigl( Z_{s}^{\alpha
} \bigr) \,ds \biggr],
\end{eqnarray*}
where $C_{n}>0$ is a constant such that $\rho_{T}^{\alpha,n}\geq
C_{n}$: it
exists because
\[
\bigl( \rho_{T}^{\alpha,n} \bigr) ^{-1}:=\exp \biggl(
\int_{0}^{T\wedge
\tau^{n}} \bigl\langle b_{s}^{\alpha}-v_{s}^{\alpha},dW_{s}
\bigr\rangle +\frac{1}{2}\int_{0}^{T\wedge\tau^{n}}\bigl|b_{s}^{\alpha}-v_{s}^{\alpha}%
\bigr|^{2}
\,ds \biggr)
\]
and $\tau^{n}$ includes the stopping of all these integrals. Therefore,
\[
\E \biggl[ \int_{0}^{T\wedge\tau^{n}}f \bigl(
Z_{s}^{\alpha} \bigr) \,ds \biggr] \leq\frac{C^{\prime}}{C_{n}}%
\]
and thus also
\[
\E \biggl[ \int_{0}^{1}\int
_{0}^{T\wedge\tau^{n}}f \bigl( Z_{s}^{\alpha
}
\bigr) \,ds\,d\alpha \biggr] \leq\frac{C^{\prime}}{C_{n}}.
\]
The proof is complete.
\end{pf}

The next corollary extends the previous result to the case when\break \mbox{$\int%
_{H}f ( x ) \nu ( dx ) <\infty$}. Clearly,
\[
\int_{H}f ( x ) \nu ( dx ) \leq\frac{1}{Z}
\int%
_{H}f ( x ) \gamma ( dx )
\]
but not conversely, without additional assumptions on $V$. Hence, Corollary~\ref{c40} implies Lemma~\ref{l39}, but
not conversely, in an obvious way. However, we may easily deduce Corollary~\ref{c40} from Lemma~\ref{l39} by assumptions (H1)--(H3).

\begin{corollary}
\label{c40}Let $f\dvtx H\rightarrow{}[0,\infty)$ be a
Borel measurable function such that
%
\begin{equation}
\int_{H}f ( x ) \nu ( dx ) <\infty .\label{assump on f non gaussian}
\end{equation}
Then there is a Borel set $\Xi\subset S_V\cap H_V$ with $\nu ( \Xi
 ) =1$
[equivalently $\gamma ( \Xi ) =1$] having the property stated in
Lemma~\ref{l39}.
\end{corollary}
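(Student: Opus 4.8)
The plan is to reduce the statement to Lemma~\ref{l39} by truncating $f$ along the level sets of $V$. Although $\nu$-integrability of $f$ does not give $\gamma$-integrability (as noted above), it does give it for the truncated functions: writing $\int_H f\,d\nu<\infty$ equivalently as $\int_H fe^{-V}\,d\gamma=Z\int_H f\,d\nu<\infty$, and setting $g_N:=f\,1_{\{V\le N\}}$ for $N\in\mathbb N$, one has $1\le e^{N}e^{-V}$ on $\{V\le N\}$, hence $g_N\le e^{N}fe^{-V}$ pointwise and therefore $g_N\in L^1(H,\gamma)$. I would then apply Lemma~\ref{l39} to each $g_N$, obtaining Borel sets $\Xi_N\subset S_V\cap H_V$ with $\gamma(\Xi_N)=1$ enjoying the property stated there (for $g_N$), and set $\Xi:=\bigcap_{N\in\mathbb N}\Xi_N$, which is Borel, contained in $S_V\cap H_V$, and satisfies $\gamma(\Xi)=\nu(\Xi)=1$.

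Next I would show that along the relevant paths $f$ actually coincides with one of these truncations. Fix $z\in\Xi$, $T>0$ and two solutions $X,Y$ of \eqref{SDE} with initial condition $z$. Since $z\in S_V$, Theorem~\ref{t2.10} together with uniqueness in law for \eqref{SDE} yields $P[\lim_{n\to\infty}\sigma_{H\setminus K^V_n}=\infty]=1$ for each of $X$ and $Y$, where $(K^V_n)_{n\in\mathbb N}$ is the regular $\mathcal E_\nu$-nest of compacts from Proposition~\ref{p2.9}. Hence, $P$-a.s., there are $m,m'\in\mathbb N$ with $X_s\in K^V_m$ and $Y_s\in K^V_{m'}$ for a.e.\ $s\in[0,T]$; since $V$ is bounded on each $K^V_n$ by Proposition~\ref{p2.9}(i), the quantity $N^*:=\sup_{K^V_m}V\vee\sup_{K^V_{m'}}V$ is $P$-a.s.\ finite, and convexity of $V$ gives
\[
V(Z^\alpha_s)\le \alpha V(X_s)+(1-\alpha)V(Y_s)\le N^*,\qquad \alpha\in[0,1],\ \text{a.e.\ }s\in[0,T].
\]

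To conclude, on the (full-probability) event $\{N^*<\infty\}$ choose any integer $N\ge N^*$; then $1_{\{V(Z^\alpha_s)\le N\}}=1$ for a.e.\ $(s,\alpha)$, so $f(Z^\alpha_s)=g_N(Z^\alpha_s)$ there and $\int_0^1\!\int_0^T f(Z^\alpha_s)\,ds\,d\alpha=\int_0^1\!\int_0^T g_N(Z^\alpha_s)\,ds\,d\alpha$. Since for $z\in\Xi\subset\Xi_N$ the right-hand side is $P$-a.s.\ finite for every $N$, the event $\{\int_0^1\!\int_0^T f(Z^\alpha_s)\,ds\,d\alpha=\infty\}$ is, up to a null set, contained in $\bigcup_{N\in\mathbb N}\{\int_0^1\!\int_0^T g_N(Z^\alpha_s)\,ds\,d\alpha=\infty\}$, hence $P$-negligible; this gives the claim for every $T>0$. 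The crux — and the only real obstacle — is the second step: the naive reductions (truncating $f$ itself, or running the argument of Lemma~\ref{l39} through its Ornstein--Uhlenbeck reference process) both need $\gamma$-integrability of $f$, which may genuinely fail because $f$ can be large precisely where $e^{-V}$ is small and the Ornstein--Uhlenbeck process visits such regions; the remedy is that the relevant process here is a solution of \eqref{SDE}, which by Proposition~\ref{p2.9} lives in the compacts $K^V_n$ on which $V$ is bounded, and convexity of $V$ propagates this confinement to the segments $Z^\alpha$.
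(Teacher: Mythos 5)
Your proof is correct and follows essentially the same route as the paper: both reduce to Lemma \ref{l39} via the observation that $fe^{-V}\in L^1(H,\gamma)$, then use the confinement of $X$ and $Y$ to the compacts $K_n^V$ (on which $V$ is bounded, by Proposition \ref{p2.9} and Theorem \ref{t2.10}) together with convexity of $V$ to control $V(Z_s^\alpha)$. The only cosmetic difference is that the paper applies Lemma \ref{l39} once to $fe^{-V}$ and multiplies by $e^{2M_n}$ on the confinement event, whereas you apply it to the countable family of truncations $f\,1_{\{V\le N\}}$ and intersect the resulting full-measure sets.
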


\begin{pf}
Since $\int_{H}f ( x ) e^{-V ( x ) }\gamma (
dx ) <\infty$, we may apply Lemma~\ref{l39} to the
function $f ( x ) e^{-V ( x ) }$ instead of $f (
x ) $ and get, as a result, that there is a Borel set $\Xi\subset
S_V\cap H_V$
with $\gamma ( \Xi ) =1$ having the following property: given any
$z\in\Xi$ and any two solutions $X,Y$ as in the statement of Theorem~\ref{t4}, for all $T>0$ we have
%
\begin{equation}
\label{e21} P \biggl( \int_{0}^{1}\int
_{0}^{T}f \bigl( Z_{s}^{\alpha}
\bigr) e^{-V (
Z_{s}^{\alpha} ) }\,ds\,d\alpha<\infty \biggr) =1,
\end{equation}
where $Z_{t}^{\alpha}=\alpha X_{t}+ ( 1-\alpha ) Y_{t}$. Take
$z\in\Xi$. Since $V(Z^\alpha_s)\le V(X_t)+V(Y_t)$ (recall that $V\ge0$
by Remark~\ref{r0}) and by Theorem~\ref{t2.10},
\[
P \Biggl(\bigcup_{n=1}^\infty \bigl\{
\sigma^{X,Y}_{H\setminus K^V_n} >T \bigr\} \Biggr) =1,
\]
where
\[
\sigma^{X,Y}_{H\setminus K^V_n}:=\min \bigl(\sigma^{X}_{H\setminus
K^V_n},
\sigma^{Y}_{H\setminus K^V_n} \bigr)
\]
and $\sigma^{X}_{H\setminus K^V_n},\sigma^{Y}_{H\setminus K^V_n}$ are
the first hitting times of $H\setminus K^V_n$ of $X,Y$, respectively, we
have by \eqref{e21}
\[
\int_0^1\int_0^T
f\bigl(Z^\alpha_s\bigr) e^{-V(X_s)} e^{-V(Y_s)} \,ds
\,d\alpha <\infty\qquad \mbox{on } \bigcup_{n=1}^\infty
\bigl\{\sigma ^{X,Y}_{H\setminus K^V_n} >T \bigr\}, P\mbox{-a.s.}
\]
But for $\omega\in\{\sigma^{X,Y}_{H\setminus K^V_n}>T\}$ and $M_n:=\sup
\{V(z)\dvtx  z\in K_n^V\}$
\[
 \int_0^1\int
_0^T f\bigl(Z^\alpha_s(
\omega)\bigr) \,ds \,d\alpha \le e^{2M_n} \int_0^1
\int_0^T f\bigl(Z^\alpha_s(
\omega)\bigr) e^{-V(X_s)} e^{-V(Y_s)} \,ds \,d\alpha<\infty.
\]
Hence,
\[
P \biggl( \int_0^1\int_0^T
f\bigl(Z^\alpha_s(\omega)\bigr) \,ds \,d\alpha<\infty \biggr)
=1.
\]
\upqed\end{pf}
%

\section{Applications}\label{sec7}

\subsection{Reaction--diffusion equations}\label{sec7.1}

Let $H:=L^2((0,1),d\xi)$, with $d\xi=  \mathrm{Lebesgue}$ measure and $A=-\Delta$
with domain $H^2(0,1)\cap H^1_0(0,1)$, that is, $A$ is the Dirichlet
Laplacian on $(0,1)$. Then clearly (H1) holds.\vadjust{\goodbreak}

Let $m\in[1,\infty)$ and
%
\begin{equation}
\label{e51'} V(x):=\cases{ %
\displaystyle\int
_0^1\bigl|x(\xi)\bigr|^{m+1} \,d\xi,&\quad
$\mbox{if } x\in L^{m+1}\bigl((0,1),d\xi\bigr),$
\vspace*{2pt}\cr
+\infty,&\quad $\mbox{else}.$}
\end{equation}
$V$ obviously satisfies (H2). Now we are going to verify (H3)$'$ for this
convex functional. (Of course, then according to Remark~\ref{r0} we
subsequently replace this $V$ by $V+\tfrac{\omega}2 |\cdot
|^2_H$.)

For the separable Banach space $E$ in (H3)$'$, we take
%
\begin{equation}
\label{e7.1} E:=L^{2m}\bigl((0,1),d\xi\bigr)=:L^{2m}.
\end{equation}
Then by elementary calculations for $x\in E$
%
\begin{eqnarray}
\label{e7.2} V'_E(x)&=&(m+1)|x|^{m-1} x\in H
\subset L^{{2m}/{(2m-1)}}=E',
\\
\label{e7.3} V''_E(x)
(h_1,h_2)&=&m(m+1)\int_0^1\bigl|x(
\xi)\bigr|^{m-1} h_1(\xi) h_2(\xi) \,d\xi,
\end{eqnarray}
for $h_1,h_2\in E$. Obviously, the right-hand side of \eqref{e7.3} is
also defined for $h_1,h_2\in H$ and by
H\"older's inequality, continuous in $(h_1,h_2)\in E\times H$ with
respect to the product topology. Hence, for all $x\in E$
\[
V''_E(x)\subset L\bigl(H,E'
\bigr)
\]
and furthermore (again by H\"older's inequality)
\[
\bigl\|V''_E(x)\bigr\|_{L(H,E')}
\le|x|^{m-1}_E.
\]
Equation~\eqref{e7.2} implies that $E\subset D_V$. But our Gaussian measure
$\gamma=N_{-({1}/2) A^{-1}}$ is known to have full mass even on
$C([0,1];\R)$ because it is the law of the Brownian Bridge,
hence $\gamma(E)=1$ and so, $\gamma(D_V)=1$. Furthermore, then
obviously by Fernique's theorem the first inequality in \eqref{e2} is satisfied.

It remains to verify \eqref{e2'}, that is, for $\gamma$-a.e. initial
condition $z\in H$
%
\begin{equation}
\label{e7.4} \E\int_0^T\bigl|X^V(s)\bigr|^{m-1}_E
\,ds<\infty,
\end{equation}
where $X^V(t), t\in[0,T]$, solves SDE \eqref{SDE} with
$B=0$. But the existence of such a process for $\gamma$-a.e. $z\in H$
follows from Theorem~\ref{t2.5} in Section~\ref{sec2} above. That this process
satisfies \eqref{e7.4} follows from results in \cite{BDR10}. Indeed, it
follows by
\cite{BDR10}, Theorem~3.6 and Proposition~4.1, and Fatou's lemma that even
\[
\E\int_0^T\bigl|X^V(s)\bigr|^{2m}_E
\,ds<\infty
\]
for ($\gamma$-a.e.) $z\in E$.

Hence, (H3)$'$ is verified and our main result, Theorem~\ref{t4}, applies
to this case.

\subsection{Weakly differentiable drifts}\label{sec7.2}

The main motivation to also consider condition (H3), that is, to assume
that the ($\gamma$-weak) second derivative $D^2V$ of $V$ exists and is
in $L^1(H,\gamma;L(H))$, was to make a connection between our results
and those in finite dimensions by \cite{CJ13}. As mentioned in the
\hyperref[sec1]{Introduction}, our results generalize some of the results of \cite{CJ13}
in the special case when $H=\R^d$. In addition, since we work with
respect to a Gaussian measure (and not Lebesgue measure on $\R^d$) our
integrability conditions are generically weaker than those in \cite
{CJ13}. As far as the infinite dimensional case is concerned, one might
ask what are examples of such functions $V$ satisfying condition (H3).
There are plenty of them and let us briefly describe a whole class of
such functions.

Let $\varphi\dvtx H\to[0,\infty]$ be convex, lower semicontinuous, $\varphi
\in L^{2+\delta}(H,\gamma)$ for some $\delta>0$, and G\^ateaux
differentiable, $\gamma$-a.e., that is, $\gamma(D_\varphi)=1$. Define
%
\begin{equation}
\label{e8.1} V(x):=R\bigl(\lambda, \mathcal L^{\mathrm{OU}}\bigr)
\varphi(x),\qquad x\in H,
\end{equation}
with $R(\lambda, \mathcal L^{\mathrm{OU}})$ defined as in \eqref{e24'}, that is,
it is the resolvent of the Ornstein--Uhlenbeck operator $\mathcal
L^{\mathrm{OU}}$. Then it is elementary to check from the definition that $V\dvtx H\to
[0,\infty]$ is also convex and lower semicontinuous.

Furthermore, $V$ is in the $L^{2}(H,\gamma)$-domain of
$\mathcal L^{\mathrm{OU}}$. Hence, by the maximal regularity result of \cite
{DL14} (already recalled in Section~\ref{sec3.3} above) applied to the case when
$U\equiv0$, we conclude that
$V\in W^{2,2}(H,\gamma)$, in particular we have
\[
\int_H\bigl\|D^2V\bigr\|^2_{HS}
\,d\gamma<\infty,
\]
which is stronger than the second part of condition \eqref{e2} in (H3).

Of course, one needs additional, but obviously quite mild bounds on
$\nabla\varphi$, to ensure that $\gamma(D_V)=1$ and $\nabla V\in
L^{2}(H,\gamma)$. But then the class of $V$ defined in \eqref{e8.1}
satisfy (H3). To be concrete in choosing $\varphi$ above, consider the
situation of Section~\ref{sec7.1}. Then if we take $\varphi:=V$ as defined in
\eqref{e51'}, the new $V$ given by
\eqref{e8.1} satisfy (H3).

\section*{Acknowledgments}
F. Flandoli, M. R\"{o}ckner and A. Yu. Veretennikov
would like to thank his hosts
from the Scuola Normale Superiore and University of Pisa for a great
stay in Pisa in March 2014, where a large part of this work was done.

For the fourth author, the article
was prepared within the framework of a subsidy granted to the HSE by the
Government of the Russian Federation for the implementation of the Global
Competitiveness Program.


\printaddresses
\end{document}